\documentclass{amsart}

\usepackage{etex}
\usepackage{amsmath, amssymb}
\usepackage{array}
\usepackage[frame,cmtip,arrow,matrix,line,graph,curve]{xy}
\usepackage{graphpap, color, paralist, pstricks}
\usepackage[mathscr]{eucal}
\usepackage[pdftex]{graphicx}
\usepackage[pdftex,colorlinks,backref=page,citecolor=blue]{hyperref}
\usepackage{pifont}
\usepackage{cancel}
\usepackage{mathtools}
\usepackage{tikz}

\setlength{\oddsidemargin}{0in}
\setlength{\evensidemargin}{0in}
\setlength{\marginparwidth}{0in}
\setlength{\marginparsep}{0in}
\setlength{\marginparpush}{0in}
\setlength{\topmargin}{0in}
\setlength{\headsep}{8pt}
\setlength{\footskip}{.3in}
\setlength{\textheight}{9in}
\setlength{\textwidth}{6.5in}
\setlength{\parskip}{4pt}
\linespread{1.2}

\newcommand{\mA}[0]{\mathfrak{A}}
\newcommand{\mB}[0]{\mathfrak{B}}
\newcommand{\mC}[0]{\mathfrak{C}}
\newcommand{\mD}[0]{\mathfrak{D}}
\newcommand{\mE}[0]{\mathfrak{E}}
\newcommand{\mF}[0]{\mathfrak{F}}
\newcommand{\mG}[0]{\mathfrak{G}}
\newcommand{\mH}[0]{\mathfrak{H}}
\newcommand{\mI}[0]{\mathfrak{I}}
\newcommand{\mJ}[0]{\mathfrak{J}}

\newcommand{\mL}[0]{\mathfrak{L}}

\newcommand{\mN}[0]{\mathfrak{N}}

\newcommand{\mQ}[0]{\mathfrak{Q}}
\newcommand{\mR}[0]{\mathfrak{R}}
\newcommand{\mS}[0]{\mathfrak{S}}

\newcommand{\mV}[0]{\mathfrak{V}}
\newcommand{\mX}[0]{\mathfrak{X}}
\newcommand{\mZ}[0]{\mathfrak{Z}}

\newcommand{\pr}{\mathbb{P}}

\newcommand{\E}[0]{\mathbb{E}}

\newcommand{\beq}[1]{\begin{equation}\label{#1}}
\newcommand{\enq}[0]{\end{equation}}

\newcommand{\bn}[0]{\bigskip\noindent}
\newcommand{\mn}[0]{\medskip\noindent}
\newcommand{\nin}[0]{\noindent}

\newcommand{\sub}[0]{\subseteq}
\newcommand{\sm}[0]{\setminus}
\renewcommand{\dots}[0]{,\ldots,}

\newcommand{\ov}[0]{\overline}

\newcommand{\less}[0]{~~\mbox{\raisebox{-.6ex}{$\stackrel{\textstyle{<}}{\sim}$}}~~}
\newcommand{\more}[0]{~~\mbox{\raisebox{-.9ex}{$\stackrel{\textstyle{>}}{\sim}$}}~~}

\newcommand{\A}[0]{{\mathcal A}}
\newcommand{\B}[0]{{\mathcal B}}

\newcommand{\D}[0]{{\mathcal D}}
\newcommand{\eee}[0]{{\mathcal E}}
\newcommand{\f}[0]{{\mathcal F}}

\newcommand{\g}[0]{{\mathcal G}}
\newcommand{\h}[0]{{\mathcal H}}
\newcommand{\I}[0]{{\mathcal I}}
\newcommand{\J}[0]{{\mathcal J}}
\newcommand{\K}[0]{{\mathcal K}}
\newcommand{\Ll}[0]{{\mathcal L}}

\newcommand{\N}[0]{{\mathcal N}}

\newcommand{\R}[0]{{\mathcal R}}
\newcommand{\sss}[0]{{\mathcal S}}
\newcommand{\T}[0]{{\mathcal T}}
\newcommand{\U}[0]{{\mathcal U}}

\newcommand{\W}[0]{{\mathcal W}}
\newcommand{\X}[0]{{\mathcal X}}

\newcommand{\ZZZ}[0]{{\mathcal Z}}
\newcommand{\Z}[0]{{\mathcal Z}}
\newcommand{\YYY}[0]{{\mathcal Y}}

\newcommand{\bgs}[0]{{\boldsymbol \gs}}

\newcommand{\bF}[0]{\boldsymbol\f}

\newcommand{\bG}[0]{\boldsymbol\g}
\newcommand{\bH}[0]{\boldsymbol\h}

\newcommand{\bJ}[0]{\boldsymbol\J}
\newcommand{\bN}[0]{\boldsymbol\N}
\newcommand{\bT}[0]{\boldsymbol\T}
\newcommand{\bU}[0]{\boldsymbol\U}

\newcommand{\bX}[0]{\boldsymbol\X}

\newcommand{\bZ}[0]{\boldsymbol \Z}

\newcommand{\ra}[0]{\rightarrow}
\newcommand{\Ra}[0]{\Rightarrow}

\newcommand{\FF}[0]{{\bf F}}

\newcommand{\uu}[0]{U}

\newcommand{\ww}{\mbox{{\sf w}}}

\newcommand{\0}[0]{\emptyset}

\newcommand{\C}[2]{{{#1}\choose{{#2}}}}
\newcommand{\Cc}[0]{\tbinom}
\newcommand{\ga}[0]{\alpha }
\newcommand{\gb}[0]{\beta }
\newcommand{\gc}[0]{\gamma }
\newcommand{\gd}[0]{\delta }
\newcommand{\gD}[0]{\Delta }
\newcommand{\gG}[0]{\Gamma }

\newcommand{\gl}[0]{\lambda }
\newcommand{\gL}[0]{\Lambda}
\newcommand{\go}[0]{\omega}
\newcommand{\gO}[0]{\Omega}

\newcommand{\gs}[0]{\sigma}
\newcommand{\gS}[0]{\Sigma}
\newcommand{\gz}[0]{\zeta}
\newcommand{\eps}[0]{\varepsilon }
\newcommand{\vt}[0]{\vartheta}
\newcommand{\vs}[0]{\varsigma}
\newcommand{\vr}[0]{\varrho}
\newcommand{\vp}[0]{\varphi}

\def\maxr{{\rm maxr \,\,}}

\newcommand{\Rastar}[0]{~~\mbox{\raisebox{-.05ex}{$\stackrel{*}{\Longrightarrow}$}}~~}

\newcommand{\barvp}[0]{\ov{\vp}}
\newcommand{\mmodels}[0]{\models}
\newcommand{\epp}[0]{\sf{c}}
\newcommand{\JJJ}[0]{\J}
\newcommand{\vrr}[0]{\vr}

\newcommand{\tdI}[0]{\tilde{\mI}}
\newcommand{\ud}[0]{\underline{d}}
\newcommand{\ph}[0]{\pr_h}
\newcommand{\pu}[0]{\pr_u}
\newcommand{\prh}[1][]{\pr_h}

\newtheorem{thm}{Theorem}[section]
\newtheorem{prop}[thm]{Proposition}

\newtheorem{lemma}[thm]{Lemma}
\newtheorem{obs}[thm]{Observation}
\newtheorem{cor}[thm]{Corollary}

\theoremstyle{definition}

\begin{document}

\title{Hitting times for Shamir's Problem}
%\date{\today}

\author{Jeff Kahn}
\thanks{Supported by NSF Grants DMS1501962 and DMS1954035, BSF Grant 2014290,
and a Simons Fellowship.}
\email{jkahn@math.rutgers.edu}
\address{Department of Mathematics, Rutgers University \\
Hill Center for the Mathematical Sciences \\
110 Frelinghuysen Rd.\\
Piscataway, NJ 08854-8019, USA}

\begin{abstract}

For fixed $r\geq 3$ and $n$ divisible by $r$, let 
$\bH=\bH^r_{n,M}$ be the random $M$-edge $r$-graph on
$V=\{1\dots n\}$; that is, $\bH$ is chosen uniformly from the $M$-subsets of $\K:=\C{V}{r}$ ($:= \{\mbox{$r$-subsets of $V$}\}$).
\emph{Shamir's Problem} (circa 1980) asks, roughly,
\begin{center}
\emph{for what $M=M(n)$ is $\bH$ likely to contain a perfect matching}
\end{center}
(that is, $n/r$ disjoint $r$-sets)?

In 2008 Johansson, Vu and the author showed that this is true for $M>C_rn\log n$.  
More recently the author proved the
asymptotically correct version of that result:  for fixed $C> 1/r$ and $M> Cn\log n$,
\[
\pr(\bH ~\mbox{\emph{contains a perfect matching}})\ra 1 \,\,\,
\mbox{\emph{as} $n\ra\infty$}.
\]

The present work completes a proof, begun in that recent paper,
of the definitive ``hitting time" statement:

\mn
\textbf{Theorem.}
\emph{If $A_1, \ldots ~$ is a uniform permutation of $\K$,
$\bH_t=\{A_1\dots A_t\}$, and
\[
T=\min\{t:A_1\cup \cdots\cup A_t=V\},
\]
then
$\pr(\bH_T ~\mbox{contains a perfect matching})\ra 1 \,\,\,
\mbox{\emph{as} $n\ra\infty$}$.}

\end{abstract}

\maketitle

\section{Introduction}\label{Intro}

A (simple) \emph{r-graph} (or \emph{r-uniform hypergraph}) is
a set $\h$ of $r$-subsets (\emph{edges}) of a
\emph{vertex} set $V=V(\h)$;
a \emph{matching} of such an $\h$ is a set of disjoint edges;
and a \emph{perfect matching} (p.m.)
is a matching of size $|V|/r$.
Write $\bH^r_{n,M}$ for the random $M$-edge $r$-graph on
$[n]:=\{1\dots n\}$; that is, $\bH^r_{n,M}$ is chosen uniformly from the $M$-subsets of $\K:=\C{[n]}{r}$.
(Usage notes are collected at the end of this section.)

This paper completes a proof, begun in \cite{AsSh}, of the definitive answer to 
\emph{Shamir's Problem,} which asks, roughly:
for fixed $r$ and $n$ ranging over (large) multiples $r$,
\begin{center}
\emph{for what $M$ is $\bH^r_{n,M}$ likely to contain a perfect matching?}
\end{center}

\nin
In what follows we work with a fixed $r$ and omit it
from our notation---so $\bH^r_{n,M}$ becomes $\bH_{n,M}$---and restrict to
$n$ divisible by $r$.

The story of Shamir's Problem has been told at some length in \cite{AsSh} 
and we will be briefer here.
The problem first appeared in print in
\cite{Erdos}, where Erd\H{o}s says he
heard it from Eli Shamir in 1979,
and,
following initial results of Schmidt and Shamir
in \cite{SS}, became one of the most intensively studied
questions in probabilistic combinatorics; for example, \cite[Section 4.3]{JLR}
calls Shamir's Problem and its graph factor
analogue (see below)
``two of the most challenging, unsolved problems
in the theory of random structures."

For precise statements define \emph{the}
threshold for containing a perfect matching, denoted $M_c=M_c(n)$, to be the least
the least $M$ for which
\[
\pr(\mbox{$\bH_{n,M}$
contains a perfect matching})\geq 1/2.
\]
(This is also \emph{a} threshold in the original sense of Erd\H{o}s and R\'enyi~\cite{ER}; 
see \cite{BTth} or \cite[Theorem 1.24]{JLR}.)

A natural guess---though not recognized as such in \cite{Erdos,SS}---is that
\emph{in the random setting} 
the main obstacle to existence of a perfect matching is isolated vertices
(vertices not in any edges), which typically disappear when $M\approx (n/r)\log n$.
There are three progressively stronger versions of this intuition that one
might hope to establish.  The first, called 
\emph{Erd\H{o}s-R\'enyi threshold}, involves the order of magnitude of $M_c$:
\begin{thm}\label{jkv}
For each $r$ there is $C_r$ such that
if $M>C_rn\log n$ then $\bH_{n,M}$ contains a perfect matching
w.h.p.\footnote{``with high probability," meaning with probability tending to 1 as $n\ra \infty$}
\end{thm}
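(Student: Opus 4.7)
The plan is to reduce Theorem~\ref{jkv} to a pseudorandom semi-random matching construction, broadly following the scheme of the original Johansson-Kahn-Vu argument. First I would switch from $\bH_{n,M}$ to the independent-edge model $\bH_p$ with $p\C{n-1}{r-1}\approx rC\log n$; the standard coupling between the two models preserves monotone properties such as ``contains a perfect matching.'' Chernoff bounds then yield that w.h.p.\ $\bH_p$ is nearly regular (each vertex lies in $(1\pm o(1))p\C{n-1}{r-1}$ edges) and the number of edges containing any fixed $(r-1)$-set concentrates around its expectation. This ``pseudorandomness'' of $\bH_p$ is what drives the rest of the argument.

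Next I would construct the matching via a R\"odl-nibble-style iteration on $\bH_p$. In each round, include each surviving edge independently with a small probability $\eps/d$ (where $d$ is the current average degree), discard colliding pairs, and append the survivors to a growing partial matching. A martingale/Freedman analysis shows that after $O(\log(1/\eps))$ rounds the uncovered vertex set $R$ has size $o(n)$ while the residual hypergraph remains pseudorandom on $R$. A small ``reserve'' of edges, held out of the nibble from the start (say, a random $\eps$-fraction of $\bH_p$), is kept back for the final completion step and remains essentially independent of everything done so far.

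The remaining task is to complete a perfect matching on the residual set $R$ using the reserve edges, and this is the main obstacle: at density $\Theta(\log n)$ there is simply not enough slack to iterate the nibble down to zero residual, since each nibble round shrinks the ``error budget'' at a faster rate than it shrinks $|R|$ once $|R|$ becomes small. Here I would turn to the entropic heart of the JKV approach: show that a pseudorandom residual admits many perfect matchings in expectation by a Br\'egman/permanent-style lower bound on the number of matchings in terms of the degree sequence, and then use sharp concentration (Talagrand or Kim-Vu) to conclude existence of one with probability $1-o(1)$. I expect this entropy-meets-concentration estimate---together with the bookkeeping needed to ensure that the reserve remains appropriately independent of the nibble output, and that pseudorandomness survives the restriction to $R$ with the tiny error tolerance available at logarithmic density---to be by far the most technically delicate part of the proof.
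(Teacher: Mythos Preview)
This theorem is not proved in the present paper; it is quoted as the main result of \cite{JKV}, and the paper's contribution is the sharper Theorems~\ref{ThmX}--\ref{ThmY}. So there is no ``paper's own proof'' to compare against here beyond the citation. That said, the approach of \cite{JKV}---which the paper's Section~\ref{Skeleton} extends---is quite different from what you outline: it is \emph{top-down} rather than bottom-up. One starts from the complete $r$-graph $\K$ (which has $\Phi_0$ perfect matchings), removes edges one at a time, and tracks the evolution of $\log\Phi_t$ via the martingale $\sum(\xi_i-\E\xi_i)$, where $\xi_i$ is the fraction of surviving matchings destroyed at step $i$. The entropy input is Lemma~\ref{LemmaE} (property $\mE$): almost every edge lies in roughly its fair share of matchings, which keeps the differences bounded. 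There is no nibble, no residual set, and no completion step.

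Your proposal, by contrast, is a bottom-up semi-random construction, and the gap you yourself flag is real. At edge density $\Theta(\log n)$ the nibble cannot be run until the residual $R$ is empty: the pseudorandomness error accumulates faster than $|R|$ shrinks once $|R|=o(n)$, and the reserve you set aside has only logarithmic degree on $R$, so a Br\'egman-type permanent lower bound on the reserve restricted to $R$ does not by itself give a matching with probability $1-o(1)$---the expected number of matchings may be large while the existence probability is still bounded away from $1$ (the variance is too big for second-moment or Talagrand to close the gap at this density). This is precisely the obstruction that made Shamir's problem hard and that the \cite{JKV} top-down scheme was designed to circumvent. As written, your final paragraph is a restatement of the difficulty rather than a resolution of it.
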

\nin
(Equivalently,
$M_c = \Theta(n\log n)$, where the implied constant depends on $r$.)
This was shown in \cite{JKV},
with best earlier progress in
\cite{FJ} and \cite{Kim}.
(See also \cite[Sec.\ 13.2]{Frieze-Karonski} for an exposition.)

The more precise second and third versions---\emph{asymptotics of the threshold} 
and \emph{hitting time}---are:

\begin{thm}\label{ThmX}
For fixed $C> 1/r$ and $M> Cn\log n$, $\bH_{n,M}$
contains a perfect matching w.h.p.
\end{thm}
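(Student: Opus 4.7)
The hitting time $T$ concentrates around $(n/r)\log n$ by the standard (Poisson) second-moment analysis of the number of isolated vertices: for any $\omega_n\to\infty$, w.h.p.\ $T\in[(n/r)\log n-\omega_n n,\,(n/r)\log n+\omega_n n]$. Fix any small $\delta>0$ and set $M^\ast=\lceil(1/r+\delta)n\log n\rceil$. Theorem~\ref{ThmX} guarantees a PM in $\bH_{M^\ast}$ w.h.p.; since $\bH_{M^\ast}\subseteq\bH_T$ on $\{T\ge M^\ast\}$, the conclusion is immediate there. The remaining work is to show
\[
\pr\big(\bH_T\text{ has no PM},\ T<M^\ast\big)=o(1),
\]
i.e.\ to handle the \emph{critical window} $T<M^\ast$, which combined with concentration puts $T$ into a band just at or slightly above the isolated-vertex threshold.

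Inside the critical window the plan is a sprinkling/two-round argument. Conditionally on $(T,\bH_T)$, the ``extra'' set $R:=\{A_{T+1},\ldots,A_{M^\ast}\}$ is a uniformly random $(M^\ast-T)$-subset of $\K\setminus\bH_T$, and $\bH_{M^\ast}=\bH_T\cup R$ carries a PM $\mathcal M^\ast$ w.h.p.\ by Theorem~\ref{ThmX}. To turn $\mathcal M^\ast$ into a PM lying entirely in $\bH_T$ we must swap out each of the (up to $|R|$) edges in $\mathcal M^\ast\cap R$, replacing them by edges from $\bH_T$ while preserving the matching property. Since $|R|$ can be of order $n\log n$, the swapping capacity we require is likewise $\Theta(n\log n)$.

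The swaps are realized by an \emph{absorbing} subhypergraph $\A\subseteq\bH_T$ with the property that, for any sufficiently small disjoint family $S$ of $r$-sets outside $V(\A)$, $V(\A)\cup V(S)$ admits a PM using $S$ together with edges of $\A$. Constructing $\A$ inside $\bH_T$ and proving it has the required absorption capacity is the main technical step; it reuses, and must quantitatively sharpen, the nibble-plus-booster framework used in \cite{AsSh} to prove Theorem~\ref{ThmX}. In parallel one must treat the genuinely low-degree vertices: letting $W$ be the set of vertices of degree at most a (large) constant $K$ in $\bH_T$, one shows that w.h.p.\ $|W|=(\log n)^{O(1)}$ and $\bH_T[N[W]]$ is essentially a forest, so $W$ can be absorbed into a partial matching of $\bH_T$ using its incident edges \emph{before} the absorber $\A$ is invoked on the remaining bulk of $V$.

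The principal obstacle is supplying the absorber $\A$ with capacity $\Theta(n\log n)$ while $\bH_T$ itself is only at threshold—precisely where Theorem~\ref{ThmX} barely fails to apply directly. The tools of \cite{AsSh} deliver one perfect matching at slightly supercritical density; what the hitting-time statement needs is a robust \emph{family} of matchings in $\bH_T$ flexible enough that any prescribed random edge can be swapped out via a local reroute, and that $\Theta(n\log n)$ such reroutes compose without accumulating conflicts. Extracting this strengthened conclusion from the nibble/spread machinery of \cite{AsSh}, and gluing it to the low-degree structural analysis above in the conditional distribution induced by the hitting time, is where the bulk of the technical work lies and is the step I expect to be the main obstacle.
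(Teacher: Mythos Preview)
Your proposal does not prove Theorem~\ref{ThmX}; it \emph{assumes} it. You invoke ``Theorem~\ref{ThmX} guarantees a PM in $\bH_{M^\ast}$ w.h.p.'' as an input and then try to deduce the hitting-time statement (Theorem~\ref{ThmY}). That is the reverse of the task: the statement you were asked to establish is the asymptotic threshold result, not the hitting-time refinement.

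In the paper, Theorem~\ref{ThmX} is not given an independent proof here at all. It is the main result of the predecessor paper \cite{AsSh}, and the present paper simply notes (right after the statement of Theorem~\ref{ThmY}) that Theorem~\ref{ThmY} easily implies Theorem~\ref{ThmX}. So the ``paper's own proof'' of Theorem~\ref{ThmX} is either a citation to \cite{AsSh} or the trivial deduction from Theorem~\ref{ThmY}; there is nothing for your argument to be compared to, because you have not addressed Theorem~\ref{ThmX}.

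Even read as a sketch toward Theorem~\ref{ThmY}, your route is quite different from the paper's and, by your own admission, incomplete. The paper does not use absorbers or sprinkling; it reduces Theorem~\ref{ThmY} (via Theorem~\ref{ThmY'}) to the conditional counting statement Theorem~\ref{ThmZ}, and proves the latter by tracking $\log\Phi(\bH_t)$ along the deletion process with an entropy lemma (Lemma~\ref{LemmaE}) and a martingale analysis. Your ``main obstacle''---building an absorber of capacity $\Theta(n\log n)$ inside $\bH_T$ at threshold---is exactly the hard part, and you have not supplied it; the machinery of \cite{AsSh} gives a single perfect matching (indeed, a count of them), not the kind of flexible rerouting family you need, and there is no indication that a ``quantitative sharpening'' of that framework yields absorbers. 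So as written, the proposal neither proves the stated theorem nor gives a viable alternative route to the stronger one.
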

\nin
(Equivalently, $M_c(n) \sim (n/r)\log n$.)
\begin{thm}\label{ThmY}
If $A_1, \ldots ~$ is a uniform permutation of $\K$,
$\bH_t=\{A_1\dots A_t\}$, and
\[   
T=\min\{t:A_1\cup \cdots\cup A_t=V\},
\]  
then
$\bH_T$ contains a perfect matching w.h.p.
\end{thm}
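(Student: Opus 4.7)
The plan is to leverage Theorem~\ref{ThmX} together with a refinement that handles the exact threshold. Fix a small $\eps>0$ and set $t_\pm=(1/r\pm\eps)n\log n$. A routine coupon-collector argument---the number of uncovered vertices after $t$ edges of the process concentrates around $ne^{-rt/n}$---gives $t_-\le T\le t_+$ w.h.p., and Theorem~\ref{ThmX} yields that $\bH_{t_+}$ has a perfect matching w.h.p. Since any p.m.\ covers $V$, the hitting time $T_{\mathrm{pm}}$ for ``containing a p.m.'' satisfies $T\le T_{\mathrm{pm}}\le t_+$ w.h.p.; the remaining task is to show $T_{\mathrm{pm}}=T$ w.h.p., i.e.\ that $\bH_T$ itself contains a p.m.

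My approach is to analyze $\bH_T$ directly. Conditionally on $T$ and on its (unordered) edge set, $\bH_T$ is distributed as a uniform random $T$-edge $r$-graph conditioned on covering $V$. This conditioning is mild globally---at density $T\sim(1/r)n\log n$ the unconditional probability of covering is $\Theta(1)$---but essential, since it is precisely what distinguishes the hitting time statement from the (false, at this density) claim that an unconditioned random $r$-graph has a p.m.\ w.h.p. The plan is to rerun the entropy/nibble framework of \cite{AsSh} on this conditioned model, using the fact that (i) outside an $O(1)$-sized set $S$ of vertices of atypically small degree, $\bH_T$ behaves like an unconditioned random $r$-graph of density $(1/r)n\log n$, to which the bulk of the \cite{AsSh} analysis applies and produces a near-p.m.\ on $V\setminus S$; and (ii) the few vertices in $S$ can be incorporated by a local completion/absorption step, using the guarantee from the covering condition that each $v\in S$ has at least \emph{some} edge containing it. An alternative route I would try in parallel is to strengthen Theorem~\ref{ThmX} to a lower bound on $|\mathcal{M}(\bH_{t_+})|$ (the number of p.m.s) and then show, by a second-moment calculation on the uniform backward ordering, that one such p.m.\ survives the random removal from $\bH_{t_+}$ down to $\bH_T$.

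The main obstacle is pushing the \cite{AsSh} machinery through at the \emph{critical} density $(1/r)n\log n$, rather than $(1/r+\eps)n\log n$ for fixed $\eps>0$: the entropy and nibble estimates there leave a constant-factor slack that vanishes as $\eps\to 0$, and recovering that slack requires exploiting the covering condition to replace the ``$+\eps$'' margin. The parallel ``many p.m.s'' route faces a related difficulty, since first-moment considerations suggest $|\mathcal{M}(\bH_{t_+})|$ is only superpolynomial ($e^{(n/r)\log\log n}$), which is too small to absorb the $e^{-\Theta(\eps)n}$ survival cost from removing $\Theta(\eps n\log n)$ random edges; here one would need either a higher-moment-based boost to an $e^{\Omega(n)}$ count or a more refined analysis showing the covering condition biases the removal set away from edges of a fixed p.m. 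A secondary, more routine difficulty is the absorption of the low-degree vertices in $S$: the edges available to absorb them are few and correlated with $S$ itself via the covering condition, so the absorber must be constructed adaptively rather than in advance.
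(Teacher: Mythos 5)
Your plan correctly locates the difficulty but does not resolve it, and the step you defer is exactly the content of this paper. The paper's proof of Theorem~\ref{ThmY} goes through the counting version Theorem~\ref{ThmY'}, which is reduced (in \cite{AsSh}, Section~10) to Theorem~\ref{ThmZ}: a lower bound on $\Phi(\bH)$ for $\bH_{n,M}$ with $M\sim(n/r)\log n$ \emph{conditioned on the event that every vertex has degree at least $\gd_x\sim\eps\log n$} --- an event of probability about $\exp[-n^{\gd}]$ (cf.\ Corollary~\ref{betacor}), not the $\Theta(1)$-probability covering event you propose to condition on. Proving that conditional statement at the critical density is then carried out by running the biased edge-deletion process of Observations~\ref{pr.ww}--\ref{pr.ww'}, tracking $\log\Phi_t$ by martingale concentration, and controlling the conditioning throughout via the comparison machinery for $\gb$'s, the properties $\mR$, $\mB$, and the configuration-model estimates of Sections~\ref{Comparing}--\ref{Foundation}. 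Your sentence ``rerun the entropy/nibble framework of \cite{AsSh} on this conditioned model'' names this task; it does not perform it, and your own ``main obstacle'' paragraph concedes the point. So the proposal has a genuine gap: the heart of the proof is missing.

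Several of the supporting claims are also wrong in ways that would derail the sketch even as a roadmap. (a) Conditioned on $T=t$, $\bH_T$ is \emph{not} uniform over covering $t$-edge graphs: the event $T=t$ requires $\bH_{t-1}$ to miss a vertex, so the law is size-biased toward sets with many edges whose deletion uncovers a vertex. (b) The exceptional set $S$ is not of size $O(1)$: at time $T$ there are already $\Theta(\log n)$ vertices of degree $1$, and the number of vertices of degree below $\eps\log n$ is about $n^{\eps\log(1/\eps)}$, i.e.\ polynomial; this is precisely why the reduction needs the full strength of Theorem~\ref{ThmZ} (with the flexibility of vertex-dependent $\gd_x$) rather than an ad hoc absorption of a bounded set, and why the paper stresses the argument fails if $\eps=o(1)$. (c) Your dismissal of the ``many p.m.s survive the removal from $\bH_{t_+}$ down to $\bH_T$'' route rests on a miscalculation: the count $e^{(n/r)\log\log n}$ vastly exceeds the survival cost $e^{-\Theta(\eps n)}$, so the first moment is not the obstruction; the obstruction is concentration under the correctly reweighted (non-uniform) deletion dynamics --- and that, in effect, is the route the paper actually takes, at the cost of all of Sections~\ref{Skeleton}--\ref{Foundation}.
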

\nin
(Here $T$
is the aforementioned \emph{hitting time}.
It is easy to see that Theorem~\ref{ThmY} implies Theorem~\ref{ThmX}.)
For $r=2$, 
Theorems~\ref{ThmX} and \ref{ThmY}
were shown by
Erd\H{o}s and R\'enyi \cite{ERPM} and
Bollob\'as and Thomason \cite{BT} respectively.

Theorem~\ref{ThmX} was first formally conjectured (in a stronger form
corresponding to what's shown in \cite{ERPM}) in 
\cite{CFMR} and 
Theorem~\ref{ThmY} was proposed in \cite{JKV},
though each was probably considered plausible by the time it
was recorded.  
(That Theorem~\ref{jkv} was apparently not on the radar in 
\cite{Erdos,SS}---Erd\H{o}s specifically says he has no idea what to expect for Shamir's
Problem---seems odd in view of \cite{ERPM},
but perhaps suggests that the above results were initially thought too much to expect.)

The predecessor, \cite{AsSh}, of the present work proved Theorem~\ref{ThmX} and
began a proof of Theorem~\ref{ThmY} whose completion is our main objective here.
The proof proceeds by way of 
a reduction---given in \cite{AsSh}---to a conditional 
version of Theorem~\ref{ThmX} (Theorem~\ref{ThmZ} below).
The proof of the conditional statement is similar to the proof of Theorem~\ref{ThmX};
but the conditioning---on a low probability event---makes even
formerly routine points tricky to deal with, and
the point of the separate proof of Theorem~\ref{ThmX} was to show the 
structure of the argument unencumbered by these extra difficulties.

\mn
\emph{Graph factors}
(briefly; see \cite{JKV,AsSh} for a little more).
Recall that, for graphs $H$ and $G$, an \emph{$H$-factor} of $G$ is a
collection of copies of $H$ in $G$ 
whose vertex sets partition $V(G)$.
The graph factor counterpart of Shamir's Problem asks (roughly): for a fixed $H$,
\emph{when is the random graph $G_{n,M}$ likely to contain an $H$-factor?}
This was first suggested (for $H=K_3$) by
Ruci\'nski \cite{Rucinski2}.

The factor analogue of Theorem~\ref{jkv} was shown in \cite{JKV} for
\emph{strictly balanced} $H$ (more or less those $H$'s
for which one expects it to hold; see \cite[Conjecture~1.1]{JKV} for what should
be true in general).
For certain nice $H$'s---e.g.\ cliques---beautiful coupling arguments of Riordan 
and Heckel \cite{Riordan,Heckel} derive the factor versions of
Theorems~\ref{jkv} and \ref{ThmX} \emph{from} their Shamir versions,
a connection that seems unlikely to extend to Theorem~\ref{ThmY}.
As suggested in \cite{AsSh}, I expect 
that the work there and here extends to factors, though, at least for what
we do here,
this looks fairly excruciating absent some simplification of the material below.

\mn

In recent work, Frankston, Narayanan, Park and the author \cite{FKNP}
used a new approach inspired by \cite{ALWZ} to prove a general threshold result
(a relaxation, suggested by Talagrand \cite{Talagrand}, of a still open conjecture of Kalai and the  
author \cite{KK}) that easily implies Theorem~\ref{jkv} and much more.
It is, however, hard to imagine Theorem~\ref{ThmX} (\emph{a fortiori} 
Theorem~\ref{ThmY}) being proved along similar lines.
It would be very interesting to decide whether the approach of \cite{FKNP}
can recover the factor version of Theorem~\ref{jkv} proved in \cite{JKV}.

As in \cite{JKV,AsSh}, the proofs of Theorems~\ref{ThmX} and \ref{ThmY}
depend crucially on establishing stronger counting versions;
thus for Theorem~\ref{ThmY}, 
with $\Phi(\h)$ denoting the number of perfect matchings of $\h$,
we show:
\begin{thm}\label{ThmY'}
For $\bH_t$ and $T$ as in Theorem~\ref{ThmY}, w.h.p.
\beq{TY'bd}
\Phi(\bH_T) > \left[e^{-(r-1)}\log n\right]^{n/r}e^{-o(n)} .
\enq
\end{thm}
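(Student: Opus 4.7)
My plan is to deduce Theorem~\ref{ThmY'} from a conditional counting analogue of Theorem~\ref{ThmX}, namely the counting form of the conditional statement Theorem~\ref{ThmZ} flagged in the introduction. A standard coupon-collector analysis gives $T = (n/r)\log n + O(n)$ w.h.p., and for each $t$ in this window the conditional distribution of $\bH_T$ given $T = t$ is (up to negligible boundary effects) that of $\bH_{n,t}$ conditioned on the event $\mathcal{C}_t := \{A_1\cup\cdots\cup A_t = V\}$ that every vertex is covered by some edge. Thus Theorem~\ref{ThmY'} reduces to showing that for each such $t$,
\[
\pr\!\Bigl(\Phi(\bH_{n,t}) > [e^{-(r-1)}\log n]^{n/r}e^{-o(n)} \,\Bigm|\, \mathcal{C}_t\Bigr)\to 1.
\]
The right-hand side matches, up to a factor $e^{o(n)}$, the expected number of perfect matchings in $\bH_{n,t}$ at $t\sim (n/r)\log n$, so one is asserting that $\Phi$ concentrates near its mean even in the conditioned world.

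To establish this conditional lower bound on $\Phi$ I would follow the architecture of the proof of the counting version of Theorem~\ref{ThmX} in \cite{AsSh}. Schematically, that proof exposes a small ``reserve'' sub-hypergraph first to handle all atypical local structure---low-degree vertices, small dense pieces, non-expanding sets---and then uses the remaining edges, still a random $r$-graph with $M' > C n' \log n'$ for some $C > 1/r$ on the residual vertex set, to count extensions of partial matchings via an entropy/absorbing argument in the spirit of \cite{JKV}. The modification demanded by the conditioning on $\mathcal{C}_t$ is to first \emph{commit} to how the now non-negligible population of atypically low-degree vertices is matched: for each such $v$, fix one of its few incident edges to be used in the perfect matching. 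After this commitment, the residual random hypergraph on the uncovered vertices should, with the correct bookkeeping, be close to an unconditional $\bH_{n',M'}$ in the range of Theorem~\ref{ThmX}, so the counting machinery applies. Multiplying the number of consistent spine choices by the count from the residual would then recover the target $[e^{-(r-1)}\log n]^{n/r}e^{-o(n)}$.

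The main obstacle is propagating the conditioning on $\mathcal{C}_t$ through every step of the argument. Even inside the concentration window $\pr(\mathcal{C}_t)$ is only $\Theta(1)$ in a narrow subwindow and decays rapidly outside it; meanwhile many of the standard auxiliary ``good events'' used in \cite{AsSh}---degree regularity, codegree control, expansion estimates, bounds on cores---have failure probabilities not much smaller than $\pr(\mathcal{C}_t)$, so naive union bounds become useless. Each such estimate must be re-proved conditionally, either directly or via a measure comparison showing that the conditional distribution on the ``high-degree part'' of $\bH_{n,t}$ is close (in a strong enough sense) to a tractable unconditional model, losing at most $e^{o(n)}$ in the count at each step. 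The switching-type comparisons that would let one invoke Theorem~\ref{ThmX} on the residual are where I expect most of the difficulty to lie, since a careless estimate there costs a multiplicative $e^{\Theta(n)}$ factor and wipes out the whole bound.
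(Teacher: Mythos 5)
Your top-level outline---pass to a conditional counting statement and deal with the low-degree vertices by committing edges to them---is in fact the shape of the paper's own route (the reduction is carried out in \cite[Section~10]{AsSh} and only quoted here), but the step where you assert that after the commitment ``the residual random hypergraph \ldots should be close to an unconditional $\bH_{n',M'}$ in the range of Theorem~\ref{ThmX}, so the counting machinery applies'' is precisely where the proposal breaks, and closing that gap is the entire content of this paper. Whatever cutoff defines ``atypically low-degree,'' you lose: if the cutoff is of order $\log n$ (say $\eps\log n$), then revealing which vertices fall below it conditions the residual on \emph{every} remaining vertex having degree at least roughly $\eps\log n$, an event of probability about $\exp[-n^{\gd}]$ with $\gd=\eps\log(3/\eps)$ (cf.\ Corollary~\ref{betacor}); this cannot be dismissed as ``a measure comparison \ldots losing at most $e^{o(n)}$,'' because the obstruction is not a lost factor in the count but the fact that w.h.p.\ statements from the unconditional analysis do not transfer across a conditioning of probability $\exp[-n^{\Theta(1)}]$ unless their unconditional failure probabilities are $o(\exp[-n^{\gd}])$---and the key estimates behind Theorem~\ref{ThmX} (martingale concentration at scale $n^{-\go(1)}$, the generic events) are nowhere near that small. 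If instead the cutoff is $o(\log n)$, the residual still contains $n^{\Theta(1)}$ vertices of degree below $\eps\log n$, which the machinery cannot tolerate: it genuinely needs minimum degree $\gO(\eps\log n)$ with $\eps$ fixed (see \eqref{Rg1} and the remark following \eqref{role.of.eps}). In either case the residual density is $M'\sim (n'/r)\log n'$, i.e.\ exactly at the threshold constant, outside the range $C>1/r$ of Theorem~\ref{ThmX}, so there is no unconditional theorem to invoke. What your plan treats as bookkeeping is exactly Theorem~\ref{ThmZ}, and proving it requires re-running the whole argument under the tilted measure (Observations~\ref{pr.ww}--\ref{pr.ww'}, the $\gb$-comparison machinery of Section~\ref{Comparing}, the additional generic properties $\mR^2$--$\mR^4$, and the configuration-model estimates of Section~\ref{Foundation}), not a transfer from the unconditional model.

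Two secondary points. First, the law of $\bH_T$ given $T=t$ is not that of $\bH_{n,t}$ conditioned on $\mathcal{C}_t$: the event $\{T=t\}$ additionally forces some vertex to be covered only by $A_t$, which reweights the conditioned measure by the number of such critical edges; this is manageable, but not by calling it a negligible boundary effect. Second, your diagnosis locates the danger in the wrong place: in the window where $T$ concentrates, $\pr(\mathcal{C}_t)=\Theta(1)$, so conditioning on covering alone is cheap and the auxiliary good events survive it by a trivial union bound. The expensive conditioning is the minimum-degree event created by your own commitment step, which is why the paper's reduction lands on the $\{d_{\bH}(x)\geq\gd_x\sim\eps\log n\}$ conditioning of Theorem~\ref{ThmZ} rather than on $\mathcal{C}_t$.
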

\nin
(Up to a subexponential factor, the right-hand side of \eqref{TY'bd}
is the expected value of its left-hand side.)

Our assignment here is to prove the following conditional statement,
which, as shown in \cite[Section 10]{AsSh}, implies Theorem~\ref{ThmY'}.
(The same reduction gets Theorem~\ref{ThmY} itself from the weaker version of
Theorem~\ref{ThmZ} corresponding to Theorem~\ref{ThmX}, but, again,
we don't know how to prove the weaker version without proving the stronger.)

\begin{thm}\label{ThmZ}
Fix a small positive $\eps$ and suppose $\gd_x \sim \eps\log n$
for each $x\in V :=[n]$.
Let $M=M(n)\sim (n/r)\log n$ and let $\bH$ be distributed as
$\bH_{n,M}$ conditioned on
\[  
\{d_{\bH}(x)\geq \gd_x ~\forall x\in V\}.
\]  
Then w.h.p.
\beq{PhibH}
\Phi(\bH)> \left[e^{-(r-1)}\log n\right]^{n/r}e^{-o(n)}.
\enq
\end{thm}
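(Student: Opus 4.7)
My plan is to adapt the proof of Theorem~\ref{ThmX} from \cite{AsSh} to the conditional distribution of Theorem~\ref{ThmZ}. The heuristic is that the conditioning is mild: for typical $x$ the lower bound $\gd_x \sim \eps \log n$ is smaller than the expected degree $\sim \log n$ by a factor of $\eps$, so the constraint is generally slack by a wide margin, and $\bH$ should qualitatively look like the unconditioned $\bH_{n,M}$. Concretely, I would couple $\bH$ with a pair $(\bH_0, \bH_1)$, where $\bH_0$ is a canonically chosen minimal sub-$r$-graph with $d_{\bH_0}(x) \geq \gd_x$ for all $x$, and $\bH_1 := \bH \sm \bH_0$ is the remainder, distributed (conditional on $\bH_0$) as a uniform $(M - |\bH_0|)$-subset of $\K \sm \bH_0$. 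Since $|\bH_0| \leq \lceil \sum_x \gd_x / r \rceil \sim \eps M$, the hypergraph $\bH_1$ has $M_1 \sim (1 - \eps)(n/r) \log n$ edges and is close in total variation to an unconditioned $\bH_{n, M_1}$.

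\textbf{Pseudo-randomness and entropy counting.} The first (and main) task is to verify that the quasi-randomness properties needed in the \cite{AsSh} argument---concentration of codegrees, typicality of extensions of small partial matchings, regularity of the link of a typical partial matching---transfer from $\bH_{n,M}$ to $\bH = \bH_0 \cup \bH_1$; since $\bH_1$ is essentially an unconditioned random $r$-graph, these should follow with only minor modifications. Granted this, the entropy counting of \cite{JKV,AsSh} then applies: letting $\mathbf{M}$ be a uniformly random perfect matching of $\bH$ (whose existence follows e.g.\ from Theorem~\ref{ThmX} via a coupling), we have $\log \Phi(\bH) = H(\mathbf{M})$, and by revealing the edges of $\mathbf{M}$ one at a time in a carefully chosen order, the chain rule plus quasi-randomness gives that the conditional entropy of each new edge is at least $\log[(\log n) e^{-(r-1)}] - o(1)$, which sums over the $n/r$ edges of $\mathbf{M}$ to the bound \eqref{PhibH}.

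\textbf{Main obstacle.} As flagged in the introduction, the bulk of the difficulty is in propagating tail bounds through the conditioning event, which has sub-polynomial probability in $n$; routine moment and concentration estimates no longer apply directly. I expect the proof to consist largely of careful conditional analogues of the lemmas of \cite{AsSh}, established either by the coupling with an unconditioned model plus the small ``repair'' graph $\bH_0$ above, or by direct switching arguments on the conditional distribution. The genuinely delicate case is at vertices $x$ with small slack $d_\bH(x) - \gd_x$, where the conditioning is tight and the distribution of adjacent edges is significantly biased; handling these vertices---perhaps by designing the reveal order so that tight vertices are processed first, while the ambient randomness of the unrevealed portion is still large---seems likely to be the hardest part.
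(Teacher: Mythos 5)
The central step of your plan---that, conditionally on a canonically chosen minimal ``repair'' graph $\bH_0$ with $d_{\bH_0}(x)\geq\gd_x$ for all $x$, the remainder $\bH_1=\bH\sm\bH_0$ is uniform (or close in total variation to uniform) on $(M-|\bH_0|)$-subsets of $\K\sm\bH_0$, so that $\bH$ is essentially a forced $\eps$-fraction plus an unconditioned random $r$-graph---is false, and no choice of canonical witness can make it true. A degree-profile computation already rules it out. In your two-stage model every vertex $x$ gets, on top of its at least $\gd_x$ repair edges, an independent binomial-type contribution of mean $(1-O(\eps))\log n$, so the number of vertices of degree at most $1.5\eps\log n$ has expectation at most $n^{0.5\eps\log(1/\eps)+O(\eps)}$ (and is no larger, w.h.p., by Markov). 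In the true conditional model, the law of $d_{\bH}(x)$ for a fixed $x$ is, up to $1+o(1)$ tilting factors, the unconditioned law restricted to $[\gd_x,\infty)$ (the event $\mL$ is rare because of the roughly $n^{\gd}$ vertices that would otherwise be deficient, and whether a fixed $x$ has degree $\eps\log n$ or $2\eps\log n$ changes $\pr(\mL)$ only negligibly), so the expected number of such vertices is $n^{1.5\eps\log(e/(1.5\eps))+o(1)}=n^{1.5\eps\log(1/\eps)+O(\eps)}$, and a routine second-moment argument concentrates it. For small $\eps$ the two counts differ by a factor $n^{\Theta(\eps\log(1/\eps))}$, so the two laws put almost all their mass on graphs with incompatible numbers of low-degree vertices: the total variation distance is $1-o(1)$, not $o(1)$. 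Two smaller but symptomatic points: the conditioning event has probability $\gb(\K)=\exp[-n^{\gd+o(1)}]$ (Corollary~\ref{betacor}), i.e.\ stretched-exponentially small, so a w.h.p.\ statement for $\bH_{n,M}$ (in particular, existence of a perfect matching from Theorem~\ref{ThmX}) does \emph{not} transfer to $\bH$ ``via a coupling''; only properties whose unconditioned failure probability is $o(\gb(\K))$ can be pushed through the conditioning by the trivial division argument.

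This is not a presentational issue but the crux of Theorem~\ref{ThmZ}: the conditional measure is genuinely tilted and cannot be reduced to the unconditioned model plus a small correction, which is exactly why the paper never attempts such a reduction. Instead it runs the edge-deletion martingale of \cite{AsSh} directly under the conditional law, where deletions are no longer uniform but weighted by $\gb(\h\sm A)$ (Observations~\ref{pr.ww} and \ref{pr.ww'}), and most of its length is spent controlling that reweighting: the comparison machinery for $\gb$'s in Section~\ref{Comparing}, the properties $\mR^2$--$\mR^4$, the conditional expectation bound \eqref{Exit}, the reworked proofs of Lemmas~\ref{LemmaC} and \ref{LemmaF} (where one must handle the laws of $\bH_t$ and $\bH_t-Z$ under the $\gb$-weighting), and the configuration-model analysis of the conditioned degree sequence in Section~\ref{Foundation}. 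The only ingredients imported from the unconditioned setting are those with failure probability $e^{-\gO(n)}$ or $e^{-\gO(n^{2\gd})}$, which beat $\gb(\K)$; the entropy lemma (Lemma~\ref{LemmaE}) is indeed reused as you anticipate, but it is the transfer of everything else---precisely the ``tight slack'' phenomena you flag as the hard part---for which your proposal offers no mechanism, and your fallback of ``direct switching arguments on the conditional distribution'' would essentially have to recreate the paper's machinery from scratch.
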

\nin
In other words: for $\vs\ll 1$ there is $\varrho\ll 1$ such that if
$M=(1\pm \vs)(n/r)\log n$ and $\gd_x=(1\pm \vs)\eps \log n$ for each $x$,
then
\[
\pr\left(\Phi(\bH)\leq\left[e^{-(r-1)}\log n\right]^{n/r}e^{-\varrho n}\right)
< \varrho.
\]
(It should perhaps be stressed that our argument doesn't work if we allow $\eps=o(1)$;
see the Outline at the end of Section~\ref{Skeleton} and the note following
\eqref{role.of.eps} in Section~\ref{PLC}.)

\mn

In Section~\ref{Skeleton} we derive Theorem~\ref{ThmZ} from
several statements whose proofs will be the main work of this paper.
Outlining that work will be easier once we have
the framework of Section~\ref{Skeleton}, so is postponed until then,
at which point we'll also say a bit about how what we do
here relates to \cite{AsSh}.
We won't assume familiarity with \cite{AsSh}---and will wind up more or less
repeating parts of it---but, as said above, it shows 
the present argument in simpler form, and a reader
of the present work might find it a useful companion.

\mn
\textbf{Usage}

Throughout the paper we fix $r\geq 3$; take $V=[n]:=\{1\dots n\}$,
with $n$ divisible by $r$;
and use $\K$ for $\C{V}{r}$.
We use $v,w,x,y,z$ for vertices and
$\eee,\f,\g, \h,\J$ for $r$-graphs (subsets of $\K$),
or, often, bold versions of these when the $r$-graphs in question are random.
As above, we
abbreviate $\bH^r_{n,M}=\bH_{n,M}$.

We use $d_\h(\cdot)$ and $d_\h(\cdot,\cdot)$ for degree and codegree in $\h$
(thus $d_\h(x)=|\{A\in \h: x\in A\}|$ and
$d_\h(x,y)=|\{A\in \h: x,y\in A\}|$), and 
$\gD_\h$, $\gd_\h$ and $D_\h$ for maximum, minimum and average degrees in $\h$.
We use
$\h_x =\{A\in \h:x\in A\}$ and, for $X\sub V$, $\h[X] =\{A\in \h:A\sub X\}$ 
and $\h-X=\h[V\sm X]$.
We will tend to abusively write $Y\cup a$ and $Y\sm a$ for $Y\cup \{a\}$ and $Y\sm \{a\}$
(in particular $\h\sm A$ for $\h\sm\{A\}$).

For a set $X$ and $p\in [0,1]$, we use $X_p$ for the random subset in which elements
of $X$ appear independently, each with probability $p$.  In all our uses of this
$X$ will be some $\h$; so $\h_p$ is formally in conflict with $\h_x$, but there will never
be any question as to which is meant.
(We will also, beginning with \eqref{Generation}, see $\bH_t$---this always with a 
bold $\bH$, though there would be no confusion in any case.)

As above, we will sometimes use bold for random objects: 
consistently for 
$r$-graphs except when we use $\h_p$, but otherwise 
only when we need to distinguish between a random object and its possible values.

We use mathfrak characters ($\mA,\mB,\mC,\mD, \ldots$)
for properties and events.  
A \emph{property} $\mA$ will usually be a property of $r$-graphs (thus $\mA\sub 2^{\K}$),
and we will say, as convenient, 
``$\h$ has property $\mA$,"
``$\h$ satisfies $\mA$," ``$\h\in \mA$'' or ``$\h\mmodels\mA$."
An \emph{event} is then $\{\bH\mmodels \mA\}$ for some property $\mA$ and random $\bH$,
and may be denoted simply $\mA$ if we have specified $\bH$.
According to what feels natural (or typographically preferable), we use any of the synonymous 
$\mA\wedge\mR$, $\mA\cap\mB$, $\mA\mR$.
As usual, a property of $r$-graphs on $V$ is \emph{increasing} if it cannot be destroyed
by addition of edges.

We assume (as in Theorem~\ref{ThmZ}) that $\eps$ is fairly small.
We will always assume $n$ is large enough to
support our assertions and, following a common abuse,
pretend large numbers are integers.

Asymptotic notation is interpreted as $n \ra \infty$.
We use $a\ll b$ and $a=o(b)$ interchangeably and, similarly,
$a\less b$ is the same as $a<(1+o(1))b$.
We use both ``a.e." and ``a.a." to mean ``for all but a $o(1)$-fraction."
We use $\log$ for natural logarithm and
$a\pm b$ for a quantity within $b$ of $a$.

Where not otherwise stated,
implied constants in $\gO(\cdot)$ and $O(\cdot)$
are allowed to depend on $\eps$.  (Usually they won't, but we will only worry about this 
when it matters.)
A typographical \emph{convention}:  
in exponents only, we will 
use $\epp$ as a substitute for $\gO(1)$; thus different $\epp$'s in a single statement
need not (and will not) be equal.  (For consistency we allow dependence on $\eps$,
but in our uses of $\epp$ this will never make any difference.)

Finally, we set $T=|\K|-M$ 
($M$ as in Theorem~\ref{ThmZ})
and throughout the paper take
\beq{mt}
\mbox{$m_t=|\K|-t \,\, (=\C{n}{r}-t) \,\, \mbox{\emph{and}} \,\, \K^t =\C{\K}{m_t}.$}
\enq
(So $m_{_T}=M$, but we will usually use $m_{_T}$.  
We will always have $t\in [T]$.)
We will often use $m$ for $m_t$ (we think of this as a default, but won't use it 
without notice).
We use $D_m$ for the common value of $D_\h$ for $\h$'s of size $m$
(so when $m=m_t$ and $\h\in \K^t$, $D_\h$ is $D_m$,
\emph{not} the equally plausible $D_t$). 
This may all take a little getting used to, but eventually seemed less annoying than
various alternatives.

\section{Skeleton}\label{Skeleton}

Here we derive Theorem~\ref{ThmZ} from several assertions whose proofs will 
be the main content of the paper.  The discussion here
is similar to that of \cite[Sec.\ 2]{AsSh}.

Recalling that $T=|\K|-M$, we would like to proceed as in 
\cite{JKV,AsSh}, starting from $\bH_0:=\K$ and randomly deleting edges
one at a time to produce the sequence $\bH_0,\bH_1\dots \bH_T$,
with $\bH_T$ the $\bH$ of Theorem~\ref{ThmZ}.
Here uniform deletions will not do, but we may proceed as follows.

Let
\beq{mL}
\mL=\{\J\sub \K: d_\J(x)\geq \gd_x ~\forall x\in V\},
\enq
and to generate $\{\bH_t\}$:
choose $\bH_T$ uniformly from $\mL_{_T}:=\mL\wedge \K^T$;
let $A_1\dots A_T$ be a uniform
ordering of $\K\sm \bH_T$; and for $t\in \{0\dots T\}$ set
\beq{Generation}
\bH_t = \K\sm \{A_1\dots A_t\}
~~(=\bH_T\cup \{A_{t+1}\dots A_T\}).
\enq
(One peculiarity of the present approach is that we \emph{start} with the
object of interest, $\bH_T$, but analyze it as the output of the random sequence
it has been used to generate.)

\mn

The
following two rules governing the law of
$\{\bH_t\}$ are not needed for the present outline but will be the basis for
much of what follows; the easy verifications are left to the reader.
For $\h\sub \K$ of size at least $m_{_T}$,
we use $\bU_\h$ for a uniform $m_{_T}$-subset
of $\h$ and set
\beq{beta}
\gb(\h)=\pr(\bU_\h\in \mL).
\enq
\begin{obs}\label{pr.ww}
Among $\h$'s in $\K^t$,
\[
\pr(\bH_t=\h)~\propto ~\gb(\h).
\]
\end{obs}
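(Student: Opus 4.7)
The plan is a direct computation from the generative description. Fix $\h\in\K^t$ (so $|\h|=m_t$) and unpack what the event $\{\bH_t=\h\}$ says about the underlying random objects $(\bH_T,A_1\dots A_T)$. Since $\bH_t=\K\sm\{A_1\dots A_t\}$, we have $\bH_t=\h$ iff $\{A_1\dots A_t\}=\K\sm\h$ (as a set), and this in turn forces $\bH_T=\h\sm\{A_{t+1}\dots A_T\}\sub\h$. Conversely, any choice of $\bH_T\sub\h$ with $\bH_T\in\mL$ and any ordering $A_1\dots A_T$ of $\K\sm\bH_T$ whose first $t$ entries form $\K\sm\h$ produces $\bH_t=\h$.

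So I would sum over the possible values $\g$ of $\bH_T$: these are exactly the sets $\g\in\mL$ with $\g\sub\h$ and $|\g|=m_T$. For each such $\g$, the set $\K\sm\g$ has size $T$ and decomposes as $(\K\sm\h)\sqcup(\h\sm\g)$ with sizes $t$ and $T-t$; the orderings of $\K\sm\g$ whose first $t$ entries coincide (as a set) with $\K\sm\h$ number $t!(T-t)!$. Using that $\bH_T$ is uniform on $\mL_{_T}$ and the ordering is independent uniform given $\bH_T$, this yields
\[
\pr(\bH_t=\h)\;=\;\frac{t!(T-t)!}{T!\,|\mL_{_T}|}\cdot\bigl|\{\g\in\mL:\g\sub\h,\,|\g|=m_{_T}\}\bigr|.
\]

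The final step is to recognize the counting factor. By the very definition \eqref{beta} of $\gb(\h)$,
\[
\bigl|\{\g\in\mL:\g\sub\h,\,|\g|=m_{_T}\}\bigr|\;=\;\Cc{m_t}{m_{_T}}\gb(\h).
\]
Since $\Cc{m_t}{m_{_T}}$ depends only on $t$ (and $n$) but not on the particular $\h\in\K^t$, and since the prefactor $t!(T-t)!/(T!\,|\mL_{_T}|)$ is also $\h$-independent, the claim $\pr(\bH_t=\h)\propto\gb(\h)$ (on $\K^t$) follows.

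There is no real obstacle here; the observation is purely bookkeeping about the two-stage generation procedure. The only place one has to be slightly careful is in not double-counting orderings: the condition ``$\{A_1\dots A_t\}=\K\sm\h$'' is a set-equality, and the factor $t!(T-t)!$ correctly counts the permutations of $\K\sm\g$ consistent with this.
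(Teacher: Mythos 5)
Your computation is correct: summing over the possible values $\g$ of $\bH_T$ with $\g\sub\h$, $\g\in\mL_{_T}$, and counting the $t!(T-t)!$ orderings of $\K\sm\g$ whose first $t$ entries form the set $\K\sm\h$, gives $\pr(\bH_t=\h)$ as an $\h$-independent constant times $|\{\g\in\mL_{_T}:\g\sub\h\}|=\Cc{m_t}{m_{_T}}\gb(\h)$. The paper leaves this verification to the reader, and your bookkeeping is exactly the intended argument.
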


\begin{obs}\label{pr.ww'}
Among $A$'s in $\h \in \K^{t-1}$,
\[
\pr(A_t =A|\bH_{t-1}=\h)~\propto ~\gb(\h\sm A).
\]
\end{obs}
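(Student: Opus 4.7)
The plan is a direct conditional-probability calculation that unpacks the two-stage generation rule \eqref{Generation}: $\bH_T$ is uniform on $\mL_{_T}$, and given $\bH_T$ the sequence $A_1\dots A_T$ is a uniform permutation of $\K\sm\bH_T$. Both observations fall out of the same counting argument, which conditions on $\bH_T$ and then uses symmetry of the permutation of its complement.

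Concretely, for $A\in\h\in\K^{t-1}$ I would write
\[
\pr(A_t=A,\,\bH_{t-1}=\h) \;=\; \frac{1}{|\mL_{_T}|}\sum_{\bH_T}\pr\bigl(\{A_1\dots A_{t-1}\}=\K\sm\h,\,A_t=A \,\bigm|\,\bH_T\bigr).
\]
The conditional factor vanishes unless $\bH_T\in\mL_{_T}$ and $\bH_T\sub\h\sm A$; when those hold it equals $(t-1)!(T-t)!/T!$, since this is the fraction of orderings of the $T$-element set $\K\sm\bH_T$ with prescribed prefix-\emph{set} $\K\sm\h$ (in any order) and prescribed $t$-th entry $A$. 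Hence
\[
\pr(A_t=A,\,\bH_{t-1}=\h) \;=\; \frac{(t-1)!\,(T-t)!}{T!\,|\mL_{_T}|}\cdot\bigl|\{\J\in\mL_{_T}:\J\sub\h\sm A\}\bigr|.
\]

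The final count, divided by $\Cc{m_t}{m_{_T}}$ (the total number of $m_{_T}$-subsets of the $m_t$-element set $\h\sm A$), equals $\gb(\h\sm A)$ by \eqref{beta}. Every other factor on the right is independent of $A$, and $\pr(\bH_{t-1}=\h)$ is a function of $\h$ alone, so dividing produces $\pr(A_t=A\mid\bH_{t-1}=\h)\propto\gb(\h\sm A)$, as required. Observation~\ref{pr.ww} is the identical calculation with the constraint on $A_t$ omitted: the symmetry factor becomes $t!(T-t)!/T!$ and one lands on $\pr(\bH_t=\h)\propto\gb(\h)$. There is no real obstacle---only bookkeeping---which is presumably why the authors leave these verifications to the reader; the value of the observations lies in how they will be used, not in their proofs.
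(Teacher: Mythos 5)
Your calculation is correct: conditioning on $\bH_T$, using that $\bH_T$ must lie in $\mL_{_T}$ and inside $\h\sm A$, and that the permutation symmetry factor $(t-1)!(T-t)!/T!$ is independent of $A$, does give $\pr(A_t=A,\bH_{t-1}=\h)\propto|\{\J\in\mL_{_T}:\J\sub\h\sm A\}|=\Cc{m_t}{m_{_T}}\gb(\h\sm A)$, and likewise for Observation~\ref{pr.ww}. The paper explicitly leaves these verifications to the reader, and your argument is exactly the intended routine one.
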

\nin

\mn

Set $\Phi(\bH_t) = \Phi_t$ and let $\xi_t$
be the fraction of perfect matchings of $\bH_{t-1}$
that contain $A_t$ (so $\xi_t =\Phi(\bH_{t-1}-A_t)/\Phi_{t-1}$).
Then
\[
\Phi_t = \Phi_0(1-\xi_1) \cdots (1-\xi_t),
\]
or, equivalently,
\beq{mgt1}
\mbox{$\log \Phi_t = \log \Phi_0 +\sum_{i=1}^t\log (1-\xi_i).$}
\enq

\mn

The proof of Theorem~\ref{ThmZ} depends on showing that $\Phi_t$ is likely to stay reasonably
close to its expectation throughout the above evolution.  As will appear,
this is self-reinforcing, with past good behavior favoring
good behavior going forward.

An issue here is that there are 
possibilities for the $\bH_t$'s that don't support our
analysis.  
(The same was true, but in far milder form, in \cite{JKV,AsSh}.)
To deal with this we define (in Section~\ref{SecR}) a collection $\mR$ of
(``reasonable" or ``generic") $\h$'s, write $\mR_t$ for the event $\{\bH_t\in \mR\}$,

and show (mainly in Section~\ref{Foundation})
\beq{Ri}
\pr(\cap_{t\leq T}\mR_t)\ra 1.
\enq
The resulting license to ignore $\bH_t$'s not belonging to $\mR$ will underpin much of
what happens below.

Let 
\beq{gL}
\gL = (r-1)n/r
\enq
and observe that (recall $\log=\ln$)
\beq{mg0r}
\log\Phi_0= \log \frac{n!}{(n/r)! (r!)^{n/r}}
=\frac{n}{r}\log \Cc{n}{r-1}-\gL +O(\log n).
\enq

Now using $m$ for $m_t$ ($=|\K|-t$), set
\beq{gci}
\gc_t=n/(r(m+1)).
\enq
Then $\gc_t$ is
the reciprocal of the average degree in $\bH_{t-1}$, and would be equal to
$\E\xi_t$
if $A_t$ were \emph{uniform} from $\bH_{t-1}$.
That was the situation in \cite{JKV,AsSh},
but here
the $\E\xi_t$'s will require some care; we will show
(recall we are using $\epp$ for a positive constant)
\beq{Exit}
\mbox{if $~\h\in \K^{t-1}\cap\mR\cap\mL,~$ then
$~\E [\xi_t| \bH_{t-1}=\h] < (1+n^{-\epp})\gc_t$}.
\enq
This will mean that, as long as we have not
wandered out of $\mR$, we may think of $\E \xi_t$ as essentially $\gc_t$.
(Of course if $\h\not\in\mL$, the conditioning event is vacuous.)

Let $\mA_t $ be the event
\beq{At}
\left \{\log \Phi_t >\log\Phi_0 - \sum_{i=1}^t\gc_i -o(n)\right\}.
\enq
\nin
(We note, probably unnecessarily, that \eqref{At} refers to some specific $o(n)$,
so that it makes sense to talk about $\mA_t$ for a particular $n$ 
Related points will be common below, and, somewhat departing from common
practice, we will elaborate in a couple places where doing so
seems possibly helpful.)

Noting that
\begin{equation}\label{Em}
\sum_{i=1}^t\gc_i=
\frac{n}{r} \log \left[\C{n}{r}/m\right] +o(1),
\end{equation}
provided $m\gg n$,
and recalling the expression for $\Phi_0$ in \eqref{mg0r}, we find that $\mA_T$ says
\beq{logPhi}
\log \Phi_T > (n/r)\log (rm_{_T}/n)-\gL  -o(n),
\enq
which is the same as \eqref{PhibH};
so Theorem~\ref{ThmZ} is 
\beq{Main1}
\pr(\ov{\mA}_T) =  o(1).
\enq
(We will in fact show
$\pr(\cup_{t\leq T}\ov{\mA}_t) =  o(1)$; see \eqref{3terms}.)

\mn

For \eqref{Main1}
we use the method of martingales with bounded differences.
Here it is  natural---though we will need a variant of this---to consider the martingale
\[
\mbox{$\{X_t =\sum_{i=1}^t (\xi_i -\E[\xi_i|A_1\dots A_{i-1}])\}$}
\]
with associated
difference sequence
\[
\{Z_i = \xi_i -\E[\xi_i|A_1\dots A_{i-1}]\}.
\]

In general, proving concentration for such $X_t$'s
depends on maintaining some
control over the $|Z_i|$'s,
to which end we track, in addition to the $\mR_t$'s,
a second sequence of
events $\mB_t$.
These will be defined in
Section~\ref{BandR}; roughly $\mB_t$ says that no edge of $\bH_t$
is in too much more than its natural share of
perfect matchings.

For $t\leq T $
it will follow trivially from $\mB_{t-1}$
(see \eqref{proofofxibd}) that
\beq{xibd}
\xi_t =O(\gc_t).
\enq
This is more than enough
for the desired concentration but can occasionally fail, since $\mB_{t-1}$ may fail.
To allow for this, as well as possible failures of the $\mR_j$'s,
we slightly modify the above
$X$'s and $Z$'s, setting
\begin{equation}\label{Zi}
Z_i =\left\{\begin{array}{ll}
\xi_i -\E[\xi_i|A_1\dots A_{i-1}] & \mbox{if $\mB_j\mR_j$ holds for all $j<i$,}\\
0&\mbox{otherwise}
\end{array}\right.
\end{equation}
(and
$X_t = \sum_{i=1}^tZ_i$).
As shown in Section \ref{Mart},
a martingale analysis along the lines of
Azuma's Inequality then gives
\beq{conc}
\pr( X_t > \gl  ) < n^{-\omega(1)} ~~~\mbox{for $\gl\gg \sqrt{n}$.}
\enq

We next observe that if $\mB_i\mR_i$ holds for $i<t\leq T$---so
\[
\mbox{$X_t=\sum_{i=1}^t(\xi_i- \E[\xi_i|A_1\dots A_{i-1}])$}
\]
---and
$|X_t| <\sqrt{n}\log n$
(say; there is plenty of room here),
then we have $\mA_t$.
For with these assumptions we have
(using 
\eqref{Exit} and $\sum \gc_i=O(n\log n)$, the latter from \eqref{Em},
and with sums over $i\in [t]$)
\begin{eqnarray}
\mbox{$\sum\xi_i $}&=& \mbox{$X_t + \sum \E[\xi_i|A_1\dots A_{i-1}]$}\nonumber\\
&\leq & \mbox{$X_t +(1+n^{-\epp})\sum \gc_i     ~=~\sum\gc_i +n^{1-\epp} $}\label{sumxi}
\end{eqnarray}
(recall the $\epp$'s needn't agree);
while $\xi_i=O(\gc_i)$ for $i\leq t$ (see \eqref{xibd} and recall we have $\mB_{i-1}$) gives
(using \eqref{gci})
\begin{eqnarray}\label{gc2calc}
\mbox{$\sum_{i=1}^t\xi_i^2$} &=&\mbox{$O(\sum_{i=1}^t \gc_i^2)$} \nonumber\\
&<&
\mbox{$O((n/r)^2\sum\{j^{-2}:j> m_{_T}\}) = O(n/\log n).$}
\end{eqnarray}
Thus, using \eqref{mgt1} (and $\xi_i=o(1)$, as follows from \eqref{gci} and \eqref{xibd}), we have
\[
\log \Phi_t
> \log\Phi_0 -\sum
(\xi_i +\xi_i^2 ) \\
> \log\Phi_0 -\sum
\gc_i  - O(n/\log n)
\]
(where
the $O(n/\log n)$ absorbs the smaller error in \eqref{sumxi}).

\mn

Thus
the first failure, if any, of an $\mA_t$
must occur either because $X_t$ is too large
or because $\mB_i\mR_i$ fails for some $i<t$;
formally, we have
\beq{3terms}
\pr(\cup_{i\leq t}\ov{\mA}_t) < \pr(\cup_{i< t}\ov{\mR}_i)  +
\sum_{i< t}\pr(\mA_i\mR_i\ov{\mB}_i)
+
\sum_{i\leq t}\pr((\cap_{j<i}\mB_j\mR_j)\cap \ov{\mA}_i).
\enq
Here we have already promised in \eqref{Ri} that the first term is $o(1)$;
the last is $n^{-\omega(1)}$ by
\eqref{conc} and the discussion following it; and we will show---this is the main point---
\beq{Bi}
\mbox{for $i<T$,
$ ~\pr(\mA_i\mR_i\ov{\mB}_i) = n^{-\go(1)}.$}
\enq
Thus the l.h.s.\ of \eqref{3terms} is $o(1)$, which in particular gives \eqref{Main1} and,
as already discussed, Theorem~\ref{ThmZ}.  
\qed

\mn
\emph{Outline.}
The structure of our central argument is described in the easy Section~\ref{More},
and an early look at that, with the (even easier) Section~\ref{BandR}, might be helpful.
Here we briefly list contents of the sections
and then say a little about the comparison with \cite{AsSh}.

After recalling a few large deviation facts,
Section~\ref{Mart}
records what we need in the way of martingale concentration, in
a form convenient for a second application in Section~\ref{PLF1}, and
gives the calculation for \eqref{conc}.
Section~\ref{Comparing} develops some reasonably simple machinery
for dealing with $\gb(\h)$'s, the main point being the comparisons of Lemma~\ref{ZZ'lemma}.
Section~\ref{SecR} introduces the rather long list of 
requirements for the property $\mR$, with support for 
\eqref{Ri}
mostly postponed to Section~\ref{Foundation}.  
Section~\ref{Expectations} proves \eqref{Exit}, a first application of Lemma~\ref{ZZ'lemma}.
Section~\ref{BandR} finally defines the central property $\mB$, slightly
reformulates \eqref{Bi} (as \eqref{Bi*}), and disposes of the trivial \eqref{xibd}.
The next five sections are then devoted to the proof of \eqref{Bi*} (which,
as noted following \eqref{Bi}, completes the proof of Theorem~\ref{ThmZ}), as follows.

Section~\ref{More} introduces a few auxiliary properties, with assertions 
concerning them---Lemmas~\ref{LemmaE}-\ref{RCEFB}---that together 
easily imply \eqref{Bi*}.
Lemmas~\ref{LemmaE} and \ref{RCEFB} are from \cite{AsSh} and are just quoted here;
the latter is easy, but the entropy-based Lemma~\ref{LemmaE}
was a key ingredient in the earlier paper
(and is again here), being a first improvement on \cite{JKV} that opens the door to the rest.
Lemma~\ref{LemmaC} is proved in Section~\ref{PLC}.
(It is here that the $\gO(\log n)$ 
lower bound on degrees provided by $\mL$ becomes crucial; see following \eqref{role.of.eps}.)
Lemma~\ref{LemmaF}, which may be considered the core of the whole business,
is proved in Sections~\ref{PLF1}-\ref{PLF3}, with Section~\ref{PLF1}
mostly setting 
out what needs to be done and Sections~\ref{PLF2}-\ref{PLF3} doing it.

Finally, as mentioned above, Section~\ref{Foundation} is concerned with justifying
\eqref{Ri}, with Sections~\ref{Configs}-\ref{Degrees} mainly
developing machinery and Section~\ref{AppR} appying it.
(Sections~\ref{Configs}-\ref{Degrees} are largely self-contained and
maybe amusing in themselves.)

\begin{center}
\emph{All that in idea seemed simple became in practice immediately complex; 
as the waves shape themselves symmetrically from the cliff top, but to the swimmer
among them are divided by steep gulfs, and foaming crests.}
\end{center}
\hspace{3.5in}
Virginia Woolf, \emph{To the Lighthouse}

\mn

The basic difference between this paper and \cite{AsSh} is that 
$\bH_t$ is now chosen according to Observation~\ref{pr.ww}, rather than
uniformly from $\K^t$; in a sense all we are doing here 
is dealing with difficulties occasioned by that change.
This has to date proved harder than one might wish (and maybe harder 
than it needs to be, given that there is quite a lot of room in some of the arguments).
We briefly summarize similarities and differences.

As already mentioned, the above sketch is similar to the one in \cite[Sec.\ 2]{AsSh}.
The easy Section~\ref{Mart} is nearly the same as \cite[Sec.\ 3]{AsSh}
(and gets to skip a couple proofs given there).
Sections~\ref{Comparing} and \ref{Expectations}
have no counterparts in \cite{AsSh}; and 
Sections~\ref{SecR} and \ref{Foundation} have only a faint, routine echo 
in the parts of \cite{AsSh} 
(Section~5 and the appendix) that deal with 
the present $\mR^0$ (there called $\mR$).

The (central) parts of the argument in the remaining sections have rough parallels
in \cite{AsSh}, Sections~\ref{BandR}-\ref{PLC} (here) corresponding to 
Sections~5-7 (there) and Sections~\ref{PLF1}-\ref{PLF3} to Section~9.
(Sections~4 and 8 of \cite{AsSh} prove the present
Lemma~\ref{LemmaE} and correspond to nothing here.)
The biggest changes are in Sections~\ref{PLC}-\ref{PLF2}; 
it is here that we see most clearly the difference between handling
the present $\bH_t$'s and those of \cite{AsSh}, which are ordinary $\bH_{n,m}$'s.
An additional complication is that we must also deal with ($\bH-Z$)'s
(with $Z\in \K$), which in our setting---unlike in \cite{AsSh} where they are again
$\bH_{n,m}$'s---are
different from, and trickier than, the already fairly nasty
$\bH_t$'s.
See e.g.\ the proof of Lemma~\ref{LemmaF}, in particular the parallel setup 
at the beginning of Section~\ref{PLF1},
and then the arguments of Section~\ref{PLF2}, which think mainly of 
$\bG=\bH-Z$ 
and simplify considerably when $\bG=\bH$. 
(The present Section~\ref{PLF3} is just far enough from the corresponding
portion of \cite[Sec.\ 9]{AsSh} that it seems necessary to repeat.)

\section{Concentration}\label{Mart}

Recall that a r.v.\ $\xi$ is \emph{hypergeometric} if, for some
$s,a$ and $k$,
it is distributed as $|X\cap A|$, where $A$ is a fixed $a$-subset
of the $s$-set $S$ and $X$ is uniform from $\C{S}{k}$.
For the standard bounds in Theorem~\ref{T2.1}, see e.g.\
\cite[Theorems~2.1 and 2.10]{JLR}.
\begin{thm}
\label{T2.1}
If $\xi $ is binomial or hypergeometric with  $\mathbb{E} \xi  = \mu $, then for $t \geq 0$,
\begin{align}
\Pr(\xi  \geq \mu + t) &\leq
\exp\left[-\mu\varphi(t/\mu)\right] \leq
\exp\left[-t^2/(2(\mu+t/3))\right], \label{eq:ChernoffUpper}\\
\Pr(\xi  \leq \mu - t) &\leq
\exp[-\mu\varphi(-t/\mu)] \leq
\exp[-t^2/(2\mu)],\label{eq:ChernoffLower}
\end{align}
where $\varphi(x) = (1+x)\log(1+x)-x$
for $ x > -1$ and $ \varphi(-1)=1$.
\end{thm}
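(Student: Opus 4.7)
The plan is to prove both tail bounds in a uniform way via the exponential moment (Chernoff) method, using a classical comparison to reduce the hypergeometric case to the binomial one, and then derive the simpler polynomial-shaped bounds from the sharp $\varphi$-form via elementary calculus.

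First I would handle the binomial case. Writing $\xi = \sum_{i=1}^{s} X_i$ with independent $X_i \sim \mathrm{Bernoulli}(p_i)$ and $\mu = \sum p_i$, Markov's inequality applied to $e^{\lambda \xi}$ for $\lambda > 0$ gives
\[
\Pr(\xi \geq \mu + t) \leq e^{-\lambda(\mu+t)} \prod_i \mathbb{E} e^{\lambda X_i} \leq e^{-\lambda(\mu+t)} \prod_i (1 + p_i(e^\lambda -1)) \leq e^{-\lambda(\mu+t) + \mu(e^\lambda -1)}.
\]
Choosing $\lambda = \log(1 + t/\mu)$ makes the exponent exactly $-\mu\varphi(t/\mu)$, yielding \eqref{eq:ChernoffUpper}. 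The lower tail is symmetric, taking $\lambda > 0$ and bounding $\mathbb{E} e^{-\lambda X_i}$, which produces $-\mu\varphi(-t/\mu)$.

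Next I would reduce the hypergeometric case to the binomial one. The standard device, due to Hoeffding, is to observe that if $\xi = |X \cap A|$ with $X$ uniform from $\binom{S}{k}$, then $\xi$ has the same distribution as $\sum_{i=1}^{k} Y_i$ for a certain exchangeable $\{0,1\}$-sequence $(Y_i)$ obtained by sampling without replacement. The key inequality is that for any convex $f$,
\[
\mathbb{E} f\bigl(\textstyle\sum_{i=1}^{k} Y_i\bigr) \;\leq\; \mathbb{E} f\bigl(\textstyle\sum_{i=1}^{k} \tilde Y_i\bigr),
\]
where $\tilde Y_i$ are i.i.d.\ Bernoulli with mean $a/s = \mu/k$. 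Applied to $f(x) = e^{\lambda x}$, this shows that every bound on the binomial MGF, and in particular the Chernoff bound just derived, transfers verbatim to the hypergeometric. I would prove the convex-domination inequality by the classical coupling/exchangeability argument: condition on the multiset of values and show that sampling without replacement is a \emph{mean-preserving contraction} of sampling with replacement, which makes averages of convex functions smaller.

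Finally I would deduce the simpler bounds on the right-hand sides from the $\varphi$-form. For the upper tail, the elementary inequality $\varphi(x) \geq x^2/(2 + 2x/3)$ for $x \geq 0$ (verified by checking that the difference vanishes at $0$ together with its first derivative and is convex, or by expanding $(1+x)\log(1+x)$ as a power series and regrouping) gives
\[
\mu\varphi(t/\mu) \geq \frac{t^2}{2(\mu + t/3)},
\]
producing \eqref{eq:ChernoffUpper}. For the lower tail one uses $\varphi(-x) \geq x^2/2$ on $[0,1]$, immediate from the Taylor expansion $\varphi(-x) = \sum_{k\geq 2} x^k/(k(k-1))$; for $\varphi(-1) = 1$ the bound $e^{-\mu/2}$ with $t = \mu$ still holds because $\mu \geq \mu/2$. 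The only mildly delicate step is the convex-domination inequality for sampling without replacement; the rest is calculus.
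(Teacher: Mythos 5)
Your proof is correct, and it is essentially the standard argument: the paper does not prove Theorem~\ref{T2.1} at all but quotes it from \cite[Theorems~2.1 and 2.10]{JLR}, whose proof is exactly your route---the exponential-moment (Chernoff) calculation with $\lambda=\log(1+t/\mu)$ for the binomial case, Hoeffding's convex-domination of sampling without replacement by sampling with replacement to transfer the bound to the hypergeometric case, and the elementary inequalities $\varphi(x)\geq x^2/(2+2x/3)$ and $\varphi(-x)\geq x^2/2$ for the weaker forms. The only step deserving care is the Hoeffding comparison $\E f(\sum Y_i)\leq \E f(\sum \tilde Y_i)$ for convex $f$, which is a classical result (Hoeffding 1963, Theorem~4) and is exactly what the cited source invokes, so nothing is missing.
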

\nin
For larger deviations the following consequence of the finer bound in \eqref{eq:ChernoffUpper}
is helpful.
\begin{thm}
\label{Cher'}
For $\xi $ and $\mu$ as in Theorem~\ref{T2.1} and any $K$,
\begin{eqnarray*}
\Pr(\xi  > K\mu) < \exp[-K\mu \log (K/e)].
\end{eqnarray*}
\end{thm}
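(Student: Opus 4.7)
The plan is to apply the finer form of the upper tail bound \eqref{eq:ChernoffUpper} in Theorem~\ref{T2.1}, namely $\Pr(\xi \geq \mu + t) \leq \exp[-\mu\varphi(t/\mu)]$, with the choice $t = (K-1)\mu$, so that $\mu + t = K\mu$ and $t/\mu = K-1$. This reduces the task to evaluating (or rather simplifying) $\mu\varphi(K-1)$ and comparing the result with $K\mu\log(K/e)$.

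Using the definition $\varphi(x) = (1+x)\log(1+x) - x$, I would compute
\[
\varphi(K-1) = K\log K - (K-1) = K\log K - K + 1 = K\log(K/e) + 1.
\]
Substituting back gives
\[
\Pr(\xi \geq K\mu) \leq \exp[-\mu\varphi(K-1)] = \exp[-K\mu\log(K/e) - \mu],
\]
and since $\mu \geq 0$, the factor $e^{-\mu} \leq 1$ (and is strictly less than $1$ for $\mu > 0$), yielding the claimed strict inequality. The case $\mu = 0$ is trivial since then $\xi = 0$ a.s., so $\Pr(\xi > K\mu) = 0$.

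There is no real obstacle here; the only small point to notice is that one should use the \emph{sharper} of the two forms of \eqref{eq:ChernoffUpper} (the one with $\varphi$), since the weaker Bernstein-type bound $\exp[-t^2/(2(\mu + t/3))]$ would only give something like $\exp[-\Omega(K\mu)]$ rather than the desired $\exp[-K\mu\log(K/e)]$, losing the logarithmic factor in the exponent that is needed for large-deviation applications where $K \to \infty$.
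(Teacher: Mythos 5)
Your derivation is correct and is exactly the argument the paper intends: Theorem~\ref{Cher'} is stated there as a direct consequence of the finer ($\varphi$) form of \eqref{eq:ChernoffUpper}, via $t=(K-1)\mu$ and $\varphi(K-1)=K\log(K/e)+1$, just as you compute. The only (immaterial) caveat is that your substitution needs $K\geq 1$ so that $t\geq 0$; for smaller $K$ the stated bound exceeds $1$ and the claim is trivial, which is consistent with the paper only invoking the theorem for large $K$.
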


The next result, proved in \cite{GLSS}, will save us some trouble
at one point
(see Lemma~\ref{Lanemic}).
Say the $\{0,1\}$-valued
r.v.'s $\gz_1\dots \gz_t$ are a \emph{read-k family}
if there are independent r.v.'s $\psi_1\dots \psi_s$ such that each
$\gz_j$ is a function of $(\psi_i:i\in S_j)$ (for some $S_j\sub [s]$)
and
$|\{j:i\in S_j\}|\leq k$ for each $i\in [s]$.
(That is, each $\psi_i$ affects at most $k$ of the $\gz_j$'s.)
\begin{thm}\label{GH}
For $\gz_1\dots \gz_t$ a read-k family
with $\sum\E \gz_j\leq t\rho$,
and any $\gl\in [\rho,1]$,
\[
\pr(\gz_1+\cdots +\gz_t>\gl t)< \exp[-D(\gl\|\rho)t/k],
\]
where $D(\gl\|\rho)=\gl\log (\gl/\rho)+(1-\gl)\log [(1-\gl)/(1-\rho)]$.
\end{thm}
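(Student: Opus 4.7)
The natural strategy is to reduce the read-$k$ case to the independent (read-$1$) case, where the target bound $\exp[-tD(\lambda\|\rho)]$ follows from Theorem~\ref{T2.1} (with the sharper exponent $\varphi$) applied to a stochastically dominating binomial. View the index sets $S_j$ as the hyperedges of a hypergraph $H$ on $[s]$; the read-$k$ hypothesis then says every vertex of $H$ has degree at most $k$, and a set $C \subseteq [t]$ makes $\{\zeta_j : j \in C\}$ mutually independent precisely when $\{S_j : j \in C\}$ is a matching in $H$. If one could partition $[t]$ into $k$ such matchings $M_1, \ldots, M_k$, then a direct Hölder computation
\[
\E\Bigl[\prod_j e^{\theta \zeta_j}\Bigr] \le \prod_{l=1}^k \Bigl(\E\Bigl[\prod_{j \in M_l} e^{k\theta \zeta_j}\Bigr]\Bigr)^{1/k} = \prod_j \bigl(\E[e^{k\theta \zeta_j}]\bigr)^{1/k},
\]
followed by the usual Markov step and optimization in $\theta$, would give $\exp[-tD(\lambda\|\rho)/k]$.

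The obstacle is that such a partition into $k$ matchings need not exist. The ``triangle'' hypergraph $S_1 = \{1,2\}$, $S_2 = \{2,3\}$, $S_3 = \{1,3\}$ has maximum vertex degree $k = 2$ but chromatic index---and even fractional chromatic index---equal to $3$. So the Hölder-via-edge-coloring argument only delivers $\exp[-tD(\lambda\|\rho)/p]$, where $p$ is the chromatic index of $H$, and this can be strictly weaker than the claimed bound. (In the triangle, with $\zeta_j = \psi_{a_j}\psi_{b_j}$ and $\rho = q^2$, the event $\{\zeta_1=\zeta_2=\zeta_3=1\}$ has probability exactly $q^3$, which matches $\exp[-tD(1\|\rho)/k]$ but not $\exp[-tD(1\|\rho)/p]$; so the theorem is in fact tight and Hölder-partition is genuinely off.)

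My fallback would be to work at the level of the moment generating function itself. Expand
$\E\bigl[\prod_j e^{\theta \zeta_j}\bigr] = \sum_{A \subseteq [t]}(e^\theta - 1)^{|A|}\,\E\bigl[\prod_{j \in A}\zeta_j\bigr]$
and observe that each joint expectation depends only on the $\psi_i$'s for $i \in \bigcup_{j \in A}S_j$, a set of size at least $\sum_{j \in A}|S_j|/k$ by the read-$k$ condition. The goal is to turn this ``effective support of size $\gtrsim |A|/k$'' into an estimate $\E[\prod_{j \in A}\zeta_j] \lesssim \rho^{|A|/k}$ (or a suitable polynomial-in-$\rho$ variant accounting for the $1$-multiplicities of Bernoullis), then to resum and match against $(1 + \rho(e^\theta - 1))^{t/k}$. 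The Chernoff--KL optimization in $\theta$ of this ``$t/k$-copies of independent'' MGF gives exactly $\exp[-tD(\lambda\|\rho)/k]$.

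The main obstacle---and, I suspect, the reason the paper cites \cite{GLSS} rather than reproving the bound---is the bookkeeping at this last step. Joint expectations of positively correlated Bernoullis can exceed the product of marginals, so ``effective support size'' is not literally a Chernoff exponent; one has to combine it with the combinatorial structure of the $S_j$'s (how overlaps multiply across different choices of $A$) in a way that collapses cleanly to the Kullback--Leibler form after summing. This is where the delicate step lies, and it is neither a one-line consequence of the read-$k$ hypothesis nor a corollary of standard edge-coloring.
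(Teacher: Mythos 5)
First, a point of reference: the paper does not prove Theorem~\ref{GH} at all---it is quoted from \cite{GLSS} (the only added remark being that the case $\sum\E\gz_j\leq t\rho$ follows from the case of equality since $D(\gl\|\rho)$ is decreasing in $\rho\leq\gl$). So the comparison here is with the cited source rather than with an in-paper argument, and by your own account your proposal is not a proof: the second (moment-generating-function expansion) route is left as a sketch whose crucial step---converting ``effective support of size at least $\sum_{j\in A}|S_j|/k$'' into a usable bound on $\E\prod_{j\in A}\gz_j$ and resumming to a Kullback--Leibler exponent---is exactly the part you concede you cannot carry out. That is a genuine gap, not a bookkeeping issue you can wave at.

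More importantly, the obstacle you identify in your first approach is not a real one, and closing it is precisely how the bound is proved. You abandon the H\"older route because a read-$k$ hypergraph need not admit a partition of $[t]$ into $k$ matchings (your triangle example). But no such partition is needed: for functions of independent variables, the generalized H\"older inequality for product measures (Finner's inequality; equivalently the fractional form of your edge-coloring computation, provable by induction on the coordinates or via a Shearer-type entropy argument) requires only that each coordinate $i$ lie in at most $k$ of the sets $S_j$, i.e.\ exactly the read-$k$ condition, and yields
\[
\E\Bigl[\prod_{j}e^{\theta\gz_j}\Bigr]\;\leq\;\prod_{j}\bigl(\E e^{k\theta\gz_j}\bigr)^{1/k}.
\]
Since the $\gz_j$ are $\{0,1\}$-valued, $\E e^{k\theta\gz_j}=1+\rho_j(e^{k\theta}-1)$ with $\rho_j=\E\gz_j$; concavity of $\rho\mapsto\log(1+\rho(e^{k\theta}-1))$ and $\frac1t\sum\rho_j\leq\rho$ give $\prod_j(\E e^{k\theta\gz_j})^{1/k}\leq(1+\rho(e^{k\theta}-1))^{t/k}$, and Markov plus optimization over $\theta$ (substituting $\phi=k\theta$) gives exactly $\exp[-D(\gl\|\rho)t/k]$. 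Your triangle example is consistent with this: the fractional inequality is tight there, which is why the theorem itself is tight, whereas only the integer-chromatic-index shortcut loses. So the correct diagnosis is not that H\"older is ``genuinely off,'' but that you needed its fractional version; with that replacement your first approach becomes essentially the argument of \cite{GLSS}, while as written the proposal establishes nothing beyond the weaker chromatic-index bound.
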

\nin
(The statement in \cite{GLSS} assumes $\sum\E\xi_j = t\rho$,
but implies the present version since $D(\gl\|\rho)$ is decreasing in $\rho\leq \gl$.
We will not need the similar lower tail bound.)

\mn

We now turn to martingales and \eqref{conc}.
Lemma~\ref{Azish} and Proposition~\ref{EeZ}, which we use here and
again in Section~\ref{PLF3}, are Lemma~3.3 and Proposition~3.4 of
\cite{AsSh} and will not be reproved here.
The argument for~\eqref{conc} is really the same as that for (17) in
\cite{AsSh}, but is superficially different enough that it seems best to repeat it.

\mn
\begin{lemma}\label{Azish}
If $Z_1\dots Z_t$ is a martingale difference sequence with respect to the
random sequence $Y_1\dots Y_t$
(that is, $Z_i$ is a function of $Y_1\dots Y_i$ and $\E[Z_i|Y_1\dots Y_{i-1}]=0$), then for
$Z=\sum Z_i$ and any $\vt>0$,
\beq{EeeZ}
\mbox{$\E e^{\vt Z} \leq \prod_{i=1}^t \max \E[e^{\vt Z_i}|y_1\dots y_{i-1}]$}
\enq
and, consequently,
for any $\gl>0$,
\beq{Ebound}
\mbox{$\pr(Z>\gl) <e^{-\vt \gl} \prod_{i=1}^t \max \E[e^{\vt Z_i}|y_1\dots y_{i-1}]$}
\enq
(where $y_i$ ranges over possibilities for $Y_i$).
\end{lemma}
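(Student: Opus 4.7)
The plan is to first establish \eqref{EeeZ} by induction on $t$, and then deduce \eqref{Ebound} from it by a one-line application of Markov's inequality to the nonnegative random variable $e^{\vt Z}$.

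For \eqref{EeeZ} the base case $t=1$ is immediate, since $\E e^{\vt Z_1}$ is certainly bounded by its maximum over conditioning (the conditioning being vacuous). For the inductive step, I would write $Z = Z' + Z_t$ with $Z' = \sum_{i<t}Z_i$ and use the tower property, conditioning on $Y_1,\ldots,Y_{t-1}$:
\begin{equation*}
\E e^{\vt Z} \;=\; \E\bigl[\,e^{\vt Z'}\,\E[e^{\vt Z_t}\mid Y_1,\ldots,Y_{t-1}]\,\bigr],
\end{equation*}
where $e^{\vt Z'}$ can be pulled outside the inner expectation since $Z'$ is a function of $Y_1,\ldots,Y_{t-1}$. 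Bounding the inner conditional expectation pointwise by $M_t := \max_{y_1,\ldots,y_{t-1}} \E[e^{\vt Z_t}\mid y_1,\ldots,y_{t-1}]$ (a deterministic constant) gives $\E e^{\vt Z}\leq M_t\,\E e^{\vt Z'}$, and the induction hypothesis applied to $Z'$ (which is itself a sum of martingale differences w.r.t.\ $Y_1,\ldots,Y_{t-1}$) yields $\E e^{\vt Z'}\leq \prod_{i<t} M_i$, producing the claimed product.

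Then \eqref{Ebound} follows at once: $e^{\vt Z}$ is nonnegative, so Markov gives
\begin{equation*}
\pr(Z>\gl) \;=\; \pr(e^{\vt Z} > e^{\vt\gl}) \;\leq\; e^{-\vt\gl}\,\E e^{\vt Z},
\end{equation*}
and we plug in \eqref{EeeZ}. There is no real obstacle here; the only point to be slightly careful about is checking that one is allowed to replace the inner conditional expectation by its supremum---this is fine because the supremum is a (finite) constant and the outer expectation of $e^{\vt Z'}$ is a legitimate quantity to bound against, so the step does not require any integrability assumption beyond the implicit finiteness of the maxima $M_i$. I would also remark, for use in Section~\ref{PLF3}, that the same proof works verbatim if one replaces the maxima by essential suprema, but this refinement is not needed for the applications in this paper.
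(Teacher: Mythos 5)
Your proof is correct and is the standard argument: the tower-property induction that peels off the last difference and bounds the conditional factor by its maximum, followed by Markov's inequality applied to $e^{\vt Z}$. The paper itself does not reprove this lemma (it quotes Lemma~3.3 of \cite{AsSh}), and your argument is essentially the same routine one used there, so there is nothing to add.
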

\nin

Both here and in Section~\ref{PLF3}, bounds on the factors in \eqref{EeeZ} are given
by the next observation.  
\begin{prop}\label{EeZ}
For a r.v.\ $W \in [0,b]$ with $\E W \leq a$ and $\vt\in (0, (2b)^{-1}]$,
\beq{Ethetas}
\max\{\E e^{\vt (W -\E W )},\E e^{-\vt (W -\E \theta )}\}
\leq e^{\vt^2 ab}.
\enq
\end{prop}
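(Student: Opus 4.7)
The plan is to control the moment generating function of $W-\E W$ via a quadratic Taylor bound on the exponential, converting the resulting second moment $\E W^2$ into a first moment through the pointwise inequality $W^2\le bW$, which is valid because $W\in[0,b]$.

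First I would invoke the elementary bound $e^x\le 1+x+x^2$ for $x\in[0,1/2]$, which follows from $\sum_{k\ge 2}x^{k-2}/k!\le e-2<1$. Applied with $x=\vt W$ (legitimate since $0\le\vt W\le\vt b\le 1/2$) and followed by taking expectations, this gives
\[
\E e^{\vt W}\le 1+\vt\E W+\vt^2\E W^2\le 1+\vt\E W+\vt^2 ab,
\]
where the second inequality uses the pointwise bound $W^2\le bW$, hence $\E W^2\le b\E W\le ab$. Multiplying by $e^{-\vt\E W}$ and using $1+y\le e^y$ with $y=\vt\E W+\vt^2 ab$ then yields $\E e^{\vt(W-\E W)}\le e^{\vt^2 ab}$, which is the upper-tail half of \eqref{Ethetas}.

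For the lower-tail factor I would run essentially the same argument with $-\vt$ in place of $\vt$, using the companion bound $e^{-x}\le 1-x+x^2$ for $x\in[0,1/2]$ (its Taylor series is alternating and decreasing in absolute value on that range), to obtain $\E e^{-\vt W}\le 1-\vt\E W+\vt^2 ab$, and then multiplying by $e^{\vt\E W}$ and once more invoking $1+y\le e^y$ with $y=-\vt\E W+\vt^2 ab$ to recover the same bound $e^{\vt^2 ab}$.

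The only step with any content is the conversion $\E W^2\le b\E W$: this is what forces the quadratic term in the exponent to scale like $\vt^2 ab$ rather than $\vt^2 b^2$, and it is precisely where the one-sided support of $W$ enters. The hypothesis $\vt\le(2b)^{-1}$ plays the purely technical role of keeping $\vt W$ in the range where the quadratic Taylor bounds on $e^{\pm x}$ hold; everything else is just the standard Bennett/Bernstein packaging via $1+y\le e^y$.
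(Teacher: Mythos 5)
Your proof is correct: the Taylor bounds $e^{x}\le 1+x+x^2$ and $e^{-x}\le 1-x+x^2$ are valid on $[0,1/2]$, the hypothesis $\vt\le(2b)^{-1}$ keeps $\vt W$ in that range, and the conversion $\E W^2\le b\,\E W\le ab$ together with $1+y\le e^y$ gives both halves of \eqref{Ethetas} (note the ``$\E\theta$'' in the statement is a typo for $\E W$). The present paper does not reprove this proposition---it quotes it as Proposition~3.4 of \cite{AsSh}---and your argument is the standard Bernstein-type MGF bound used there, so there is nothing substantively different to compare; at most one could streamline by applying a single bound $e^{x}\le 1+x+x^2$ for $|x|\le 1/2$ directly to $x=\pm\vt(W-\E W)$, which handles both signs at once since $|W-\E W|\le b$.
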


\begin{proof}[Proof of \eqref{conc}]

Let $\vs_i=O(\gc_i)$ be the bound on $\xi_i$ in \eqref{xibd}.  We will apply
Lemma~\ref{Azish} with $Y_i=A_i$ and
$Z_i$ as in \eqref{Zi} (so $Z=X_t$),
using Proposition~\ref{EeZ} with $b=\vs_i$ and $a=a_i$ the bound in \eqref{Exit}
(with $t=i$; thus $a = (1+n^{-\epp})\gc_i$)
to bound the factors in \eqref{Ebound} (or \eqref{EeeZ}).
(For relevance of the proposition notice that, conditioned on any particular values
$A_1\dots A_{i-1}$, $Z_i$ is
either identically zero (as happens if $\mB_j\mR_j$ has failed for some $j<i$) or
$Z_i=\xi_i -\E[\xi_i|A_1\dots A_{i-1}]$, where $\xi_i\in [0,\vs_i]$ and \eqref{Exit}
bounds the conditional expectation by $a_i$.)
This combination (i.e.\ of Lemma~\ref{Azish} and Proposition~\ref{EeZ})
gives
\[
\mbox{$\pr(X_t > \gl) < \exp[\vt ^2 \sum_{i=1}^t \vs_i a_i -\vt \gl]$}
\]
for any $\gl>0$, provided, say, $\vt\leq 1 $ ($\leq (2\max \vs_i)^{-1}$).
So with
\[
\mbox{$J=\sum_{i=1}^t \vs_ia_i= O(\sum\gc_i^2) = O(n/\log n)$}
\]
(see \eqref{gc2calc}) and $\vt=\min\{1,\gl/(2J)\}$, we have
\[
\Pr(X_t > \gl)<\left\{
\begin{array}{ll}
\exp[-\gl^2/(4J)]&\mbox{if $\gl\leq 2J$,}\\
\exp[-\gl/2]&\mbox{otherwise,}
\end{array}\right.
\]
and \eqref{conc} follows (using $\gl\gg\sqrt{n}$ and, in the first case, $J=O(n/\log n)$).
\end{proof}

\section{Comparing $\gb$'s}\label{Comparing}

A basic difficulty in the present work is that 
we don't know how to estimate the probabilities $\gb(\h)$.
As a substitute, this section develops
some simple machinery for \emph{comparing} $\gb(\h)$'s that will allow us to say,
roughly, that under reasonable restrictions, small changes in $\h$ don't cause
unmanageable changes in $\gb(\h)$.

In general our lives are simpler if we can work with ``binomial'' surrogates for $\bU_\h$
(recall from \eqref{beta} that this is a uniform $m_{_T}$-subset of $\h$).
Set
\beq{gd}
\gd=\eps\log(3/\eps).
\enq
For perspective note that for $\bU=\bU_\K$ (and its binomial relatives below), 
any $x\in V$ (and large enough $n$),
\beq{deg.dev.bd}
\pr(d_{\bU}(x)<\gd_x) < n^{-1+\gd},
\enq
since the first, more precise part of \eqref{eq:ChernoffLower} 
bounds the l.h.s.\ of \eqref{deg.dev.bd} by
$\exp[-(1-\eps \log (e/\eps) +o(1))\log n]$.
(We note that $\gd$ is not meant to suggest $\gd_x$.)

Let 
\beq{vrr}
\mbox{$\vrr= n^{-(1-\gl)/2}$, with $\gl> \gd$ small and fixed}
\enq
(for concreteness we may take
$\gl=2\gd$, but the actual value barely matters),
and for $\h\sub \K$ with $|\h|=m\geq (1+\vrr)m_{_T}$, define
\beq{XhYh}
\mbox{$\bX_\h = \h_p$, $~\bZ_\h = \h_q$,}
\enq
where $p= (1+\vrr)m_{_T}/m$ and $q=(1-\vrr)m_{_T}/m$.
(Recall that $\h_p$ includes edges of $\h$ independently, with probability $p$.
The restriction on $m$ is needed to keep $p$ below $1$; of course $\bZ_\h$ could 
be defined more generally.)
We will compare $\bU_\h$, $\bX_\h $ and $\bZ_\h$, with
the basic observations in 
Lemma~\ref{comp.lemma.1} and the main point of the section 
Lemma~\ref{ZZ'lemma}.

Set
\beq{xi}
\xi=  \exp[-\vrr^2 m_{_T}/3] \,\,(= n^{-\gO(n^\gl)}).
\enq
This value, which is far smaller than we will really need it to be, is chosen so that
(by Theorem~\ref{T2.1})
\beq{XYxi}
\max\{\pr(|\bZ|>m_{_T}),\pr(|\bX|<m_{_T})\} < \xi.
\enq

\mn

Before turning to the main business of this section we record one crude
observation that will sometimes
allow us to more or less ignore very small values of $m_t$
(recall $\mL$ was defined in \eqref{mL}):

\begin{lemma}\label{Psmallm}
If $\h\in \mL$, $|\h|=m< (1+n^{-(2\gd +\epp)})m_{_T}$,  and
\beq{xdHs}
|\{x:d_\h(x)< 1.5 \eps \log n\}| =O(n^{2\gd}),
\enq
then $\gb(\h)> 1-n^{-\epp}$.  
\end{lemma}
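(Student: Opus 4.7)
The plan is to view $\bU_\h$ as the result of deleting from $\h$ a uniform random $s$-subset, where $s := m - m_{_T} < n^{-(2\gd+\epp)}m_{_T}$. Since $\h \in \mL$ already satisfies $d_\h(x) \geq \gd_x$ at every $x$, proving $\bU_\h \in \mL$ amounts to showing that, with probability at least $1-n^{-\epp}$, no vertex $x$ loses as many as $d_\h(x) - \gd_x$ of its edges during the deletion. The number removed at $x$ is hypergeometric with mean $\mu_x = s\,d_\h(x)/m \leq d_\h(x)\cdot n^{-(2\gd+\epp)}$, and I will split the vertices by degree.

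For vertices with $d_\h(x) \geq 1.5\eps\log n$, the gap $d_\h(x) - \gd_x$ is at least $(0.5-\vs)\eps\log n = \gO(\log n)$ and is also a definite (constant) fraction of $d_\h(x)$, so
\[
(d_\h(x)-\gd_x)/\mu_x \;\geq\; \gO(n^{2\gd+\epp}).
\]
Theorem~\ref{Cher'} then bounds the failure probability at $x$ by $\exp[-\gO(\log n \cdot \log n)] = n^{-\go(1)}$, and a union bound over these (at most $n$) vertices is still $n^{-\go(1)}$.

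For the (at most) $O(n^{2\gd})$ bad vertices with $d_\h(x) < 1.5\eps\log n$, I use the trivial bound that the probability \emph{any} edge at $x$ is removed is at most $\mu_x \leq 1.5\eps \log n \cdot n^{-(2\gd+\epp)}$. Summing over the $O(n^{2\gd})$ bad vertices gives
\[
O\!\left(n^{2\gd}\cdot n^{-(2\gd+\epp)}\log n\right) \;=\; O(n^{-\epp}\log n),
\]
which is at most $n^{-\epp'}$ for a slightly smaller positive $\epp'$; combining the two cases yields $\gb(\h) > 1 - n^{-\epp}$.

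The only thing to watch is the interplay between the degree threshold $1.5\eps\log n$ and the allowed number $O(n^{2\gd})$ of exceptional vertices: in the first case we need the $\gO(\log n)$ gap so that a Chernoff bound at scale $\log^2 n$ beats $n^{-\epp}$, while in the second we need $n^{2\gd}\cdot s/m_{_T}$ to absorb into $n^{-\epp}$, which is exactly the hypothesis $s < n^{-(2\gd+\epp)}m_{_T}$. This is not really the ``hard part'' of anything---the lemma is a safety valve meant to let us ignore anomalously small $m_t$'s later---but those two numerics are the only things the proof really uses.
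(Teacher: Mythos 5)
Your proposal is correct and follows essentially the same route as the paper: split vertices at the threshold $1.5\eps\log n$, handle the $O(n^{2\gd})$ low-degree vertices by the trivial union/Markov bound on losing even one edge (absorbing $n^{2\gd}\cdot n^{-(2\gd+\epp)}\log n$ into $n^{-\epp}$), and handle the rest via a large-deviation bound on the hypergeometric number of deleted edges at each vertex. The only cosmetic difference is that the paper bounds the high-degree case by an explicit count $n\C{1.5\eps\log n}{0.4\eps\log n}n^{-(2\gd+\epp)0.4\eps\log n}$, while you invoke Theorem~\ref{Cher'}; these are the same estimate in substance.
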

\nin
\begin{proof}
Set $\bU=\bU_\h$ and let $X$ be the set in \eqref{xdHs}.  If $\bU\not\in \mL$ then
$\h\sm \bU$ must contain either
\begin{itemize}
\item[(i)] an edge on some $x\in X$, or

\item[(ii)] 
for some $x\in V\sm X$, at least $1.5\eps\log n-\gd_x> 0.4\eps\log n$ of any
given $1.5\eps\log n$ edges of $\h_x$;
\end{itemize}

\nin
and the probability that one of these occurs is at most 
\[
O(n^{2\gd}\log n\cdot n^{-(2\gd+\epp)}) 
+ n\C{1.5\eps\log n}{0.4\eps\log n}n^{-(2\gd+\epp)0.4\eps\log n}
<n^{-\epp}.
\]
\end{proof}

Call $\J\sub \K$ \emph{anemic} if
\beq{anemic}
\mbox{$\J$ has at least $2rn^{2\gd}$ vertices of degree less than $2\eps \log n$.}
\enq

In Lemma~\ref{comp.lemma.1} and Corollary~\ref{comp.cor},
$\bU,\bX$ and $\bZ$ are $\bU_\g$, $\bX_\g$ and $\bZ_\g$ for some $\g\sub \K$
of size at least $(1+\vr)m_{_T}$,
\and $\mS$ is a property of $r$-graphs.
As in much of this work, there is room in the bounds---also, e.g., in the definition
of anemic---and we aim for (relative) 
simplicity rather than anything like optimality.

\begin{lemma}\label{comp.lemma.1}
{\rm (a)}
If $\mS$ is increasing, 
then
\beq{prYQ}
\pr(\bZ\in \mS)-\xi~\leq ~\pr(\bU\in \mS)~\leq ~(1-\xi)^{-1}\pr(\bX\in \mS).
\enq

\nin
{\rm (b)}
Suppose $\mS$ is increasing and membership of $\J$ in $\mS$ is determined by 
$\{x: d_\J(x)\geq \rho_x\}$, with $\rho :=\max\rho_x \leq 1.5\eps\log n$.  If
\beq{prXA}
\pr(\mbox{$\bX$ anemic})<\eta\pr(\bX\in \mS),
\enq
then
$\pr(\bZ\in\mS)>(1-\eta -n^{-\epp}))\pr(\bX\in\mS)$.

\end{lemma}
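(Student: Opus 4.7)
The plan is to exploit explicit couplings of $\bU, \bX, \bZ$. Throughout, set $m = |\g|$, $p = (1+\vrr)m_{_T}/m$, and $q = (1-\vrr)m_{_T}/m$.

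For part (a), the right-hand inequality comes from the following coupling: sample $\bX \sim \g_p$ first, and when $|\bX| \geq m_{_T}$ (probability $\geq 1-\xi$ by \eqref{XYxi}) let $\bU$ be a uniform $m_{_T}$-subset of $\bX$, sampling $\bU$ independently otherwise. By symmetry $\bU$ has the correct marginal, and $\bU \sub \bX$ whenever $|\bX| \geq m_{_T}$; since $\mS$ is increasing this yields $\pr(\bX \in \mS) \geq (1-\xi)\pr(\bU \in \mS)$. The left-hand inequality comes from the mirrored construction: sample $\bZ \sim \g_q$, and when $|\bZ| \leq m_{_T}$ extend to $\bU$ by adjoining a uniformly random $(m_{_T} - |\bZ|)$-subset of $\g \sm \bZ$, giving $\bZ \sub \bU$ on $\{|\bZ|\leq m_{_T}\}$ and hence $\pr(\bU \in \mS) \geq \pr(\bZ \in \mS) - \xi$.

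For part (b), I would use the standard monotone coupling $\bZ \sub \bX$ via i.i.d.\ uniform marks $U_e \in [0,1]$ ($e \in \bZ$ iff $U_e \leq q$, $e \in \bX$ iff $U_e \leq p$); conditional on $\bX$, each edge of $\bX$ appears in $\bZ$ independently with probability $q/p = 1 - 2\vrr/(1+\vrr)$. Since $\mS$ is increasing, $\{\bZ \in \mS\} \sub \{\bX \in \mS\}$, so
\[
\pr(\bX \in \mS) - \pr(\bZ \in \mS)  \leq  \pr(\bX \text{ anemic}) + \pr(\bZ \notin \mS,\, \bX \in \mS,\, \bX \text{ not anemic}) ;
\]
the first summand is $<\eta\,\pr(\bX \in \mS)$ by hypothesis, so it suffices to bound the second by $n^{-\epp}\pr(\bX \in \mS)$.

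To that end fix an outcome $h$ of $\bX$ with $h \in \mS$ and $h$ not anemic. Because $\mS$ is increasing and determined by $S_\J := \{x : d_\J(x) \geq \rho_x\}$, there is an upward-closed family $\mathcal{T} \sub 2^V$ with $\J \in \mS \iff S_\J \in \mathcal{T}$; since $S_\bZ \sub S_h$ under the coupling and $S_h \in \mathcal{T}$, the event $\{\bZ \notin \mS\}$ forces some $x \in S_h$ to ``drop out'', giving
\[
\pr(\bZ \notin \mS \mid \bX = h)  \leq  \sum_{x \in S_h} \pr(d_\bZ(x) < \rho_x \mid \bX = h) .
\]
I split the sum according to whether $d_h(x) < 2\eps\log n$ or not. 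For the at most $2rn^{2\gd}$ anemic-vertex terms (the bound on their number being precisely what non-anemia buys us), the crude estimate
\[
\pr(d_\bZ(x) < \rho_x \mid \bX=h)  \leq  1 - (q/p)^{d_h(x)}  \leq  d_h(x)(1-q/p)  =  O(\vrr\log n)
\]
suffices, giving total contribution $O(\vrr n^{2\gd}\log n) = O(n^{2\gd -(1-\gl)/2}\log n) = n^{-\epp}$, where smallness of $\gd$ ensures $(1-\gl)/2 > 2\gd$. For $x$ with $d_h(x) \geq 2\eps\log n$, the required number of dropped edges at $x$ is at least $d_h(x) - \rho_x \geq 0.5\eps\log n$, while the expected number is only $d_h(x)(1-q/p) = O(d_h(x)\vrr)$ -- smaller by a factor $\gg 1/\vrr$; Theorem~\ref{Cher'} then yields a per-vertex bound $\exp[-\epp(\log n)^2] = n^{-\go(1)}$, so the union bound over at most $n$ such $x$ still gives $n^{-\go(1)}$.

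The delicate point is really the anemic-vertex contribution: the per-vertex estimate $O(\vrr\log n)$ degrades only very slowly, so the argument depends essentially on the assumption of non-anemia capping their number at $2rn^{2\gd}$, combined with the choice $\vrr = n^{-(1-\gl)/2}$ for $\gl$ small, which makes $\vrr n^{2\gd}\log n$ polynomially small. The non-anemic case, by contrast, is comfortably handled by standard Chernoff with plenty of room.
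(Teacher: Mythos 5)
Your proof is correct and follows essentially the same route as the paper: part (a) is the paper's conditioning argument made explicit as couplings, and part (b) uses the same monotone coupling $\bZ=\bX_{1-\vs}$, the same anemic/non-anemic split via \eqref{prXA}, and the same union bound over vertices $x$ with $d_{\bZ}(x)<\rho_x\leq d_{\bX}(x)$, split by whether $d_{\bX}(x)<2\eps\log n$. The only cosmetic difference is that you invoke Theorem~\ref{Cher'} for the high-degree vertices where the paper writes the explicit bound $n\C{2\eps\log n}{0.5\eps\log n}\vs^{0.5\eps\log n}$ (your version in fact handles very large $d_{\bX}(x)$ a bit more explicitly), which changes nothing of substance.
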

\nin
(Note \eqref{prXA}, though we will need to check it whenever we use (b), 
is not much of a requirement.)

\begin{proof}
(a)  Since $\mS$ is increasing, we have
\[
\pr(\bZ\in \mS) \leq \pr(|\bZ|> m_{_T})+\pr(\bU\in \mS)
\]
(yielding the first inequality in \eqref{prYQ}) and
\[
\pr(\bX\in\mS)~\geq ~
\pr(|\bX|\geq m_{_T})\pr(\bU\in\mS)
\]
(yielding the second).

\mn
(b)  We couple $\bX$ and $\bZ$ in the usual way:
$\bZ= \bX_{1-\vs}$, with
$\vs = 1-(1-\vrr)/(1+\vrr) < 2\vrr$.
The desired inequality is then
\beq{ETSXY}
\pr(\bZ\in \mS|\bX\in \mS) >1-\eta-n^{-\epp}.
\enq
From \eqref{prXA} we have
\[
\pr(\mbox{$\bX$ anemic}|\bX\in \mS)~\leq ~
\pr(\mbox{$\bX$ anemic})/\pr(\bX\in \mS)~<~ \eta.
\]
On the other hand, $\{\bX\in \mS,\bZ\not\in \mS\}$ implies existence of an $x$ for which
\beq{existsx}
d_{\bZ}(x)<\rho_x\leq d_{\bX}(x),
\enq
meaning that the passage from $\bX$ to $\bZ$ deletes at least $d_{\bX}(x)-\rho_x+1$
members of $\bX_x$.
But
the probability of deleting at least $k$ edges at $x$ is at most $\C{d_{\bX}(x)}{k}\vs^k$;
so
whenever $\bX$ is non-anemic, the probability that \eqref{existsx} occurs for some $x$ is less than
\[   
2rn^{2\gd}\rho\vs + 
 n\C{2\eps\log n}{0.5\eps\log n}\vs^{0.5\eps \log n}.
\]   
Combining these observations gives \eqref{ETSXY}:
\begin{align*}
\pr(\bZ\in \mS|\bX\in \mS) &\geq
\pr(\mbox{$\bX$ non-anemic}|\bX\in \mS)
\pr(\bZ\in \mS|\bX\in \mS, \mbox{$\bX$ non-anemic}) \\
&>1-\eta - n^{-\epp}.\qedhere
\end{align*}
\end{proof}

In Corollary~\ref{comp.cor} and Lemma~\ref{ZZ'lemma} we use
$a\succ b$ for $a > (1-n^{-\epp})b$. 

\begin{cor}\label{comp.cor}
If $\mS$ is as in Lemma~\ref{comp.lemma.1}(b) and
\beq{PYS}
\pr(\bX\in \mS)> n^{\epp}\max\{\xi,\pr(\mbox{$\bX$ anemic})\},
\enq
then
\beq{PYUX}
\pr(\bZ\in \mS) \succ\pr(\bU\in \mS)  \succ \pr(\bX\in \mS)
\,\, (\geq \pr(\bZ\in \mS)).
\enq
\end{cor}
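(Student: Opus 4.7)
The plan is to put together the two parts of Lemma~\ref{comp.lemma.1} and exploit the hypothesis \eqref{PYS}, which says that $\pr(\bX\in\mS)$ is much larger than both $\xi$ and $\pr(\bX \text{ anemic})$. All three inequalities in \eqref{PYUX} will then reduce to routine bookkeeping of multiplicative $(1-n^{-\epp})$ losses. There is no real obstacle; everything is already done in Lemma~\ref{comp.lemma.1}.

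First, the parenthetical inequality $\pr(\bX\in\mS)\geq\pr(\bZ\in\mS)$ is immediate from the standard monotone coupling (include each edge of $\g$ in $\bZ$ with probability $q$ and then, independently, promote it into $\bX$ with additional probability so that the total inclusion probability is $p>q$) together with $\mS$ being increasing.

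Next, I apply Lemma~\ref{comp.lemma.1}(b) with $\eta := \pr(\bX \text{ anemic})/\pr(\bX\in\mS)$. By \eqref{PYS} we have $\eta<n^{-\epp}$, so the hypothesis \eqref{prXA} holds and the lemma gives
\[
\pr(\bZ\in\mS) > (1-\eta - n^{-\epp})\pr(\bX\in\mS) > (1-n^{-\epp})\pr(\bX\in\mS),
\]
i.e.\ $\pr(\bZ\in\mS) \succ \pr(\bX\in\mS)$. This is the strongest of the three assertions and already pins $\pr(\bX\in\mS)$ and $\pr(\bZ\in\mS)$ to within a multiplicative $(1\pm n^{-\epp})$ of each other (in view of the parenthetical inequality).

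Finally, I use Lemma~\ref{comp.lemma.1}(a) to slot $\pr(\bU\in\mS)$ between them. The upper bound there gives $\pr(\bX\in\mS)\geq (1-\xi)\pr(\bU\in\mS)$, and since \eqref{PYS} forces $\xi<n^{-\epp}\pr(\bX\in\mS)\leq n^{-\epp}$, combining with the previous paragraph yields
\[
\pr(\bZ\in\mS) > (1-n^{-\epp})\pr(\bX\in\mS) \geq (1-n^{-\epp})(1-\xi)\pr(\bU\in\mS) > (1-n^{-\epp})\pr(\bU\in\mS),
\]
which is $\pr(\bZ\in\mS)\succ\pr(\bU\in\mS)$. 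For the remaining inequality, the lower-bound half of Lemma~\ref{comp.lemma.1}(a) gives $\pr(\bU\in\mS)\geq\pr(\bZ\in\mS)-\xi$; substituting the bound from the previous paragraph and again using $\xi<n^{-\epp}\pr(\bX\in\mS)$ delivers
\[
\pr(\bU\in\mS) \geq (1-n^{-\epp})\pr(\bX\in\mS) - \xi > (1-n^{-\epp})\pr(\bX\in\mS),
\]
i.e.\ $\pr(\bU\in\mS)\succ\pr(\bX\in\mS)$ (with a slightly adjusted $\epp$, as allowed by our convention). This completes the chain in \eqref{PYUX}.
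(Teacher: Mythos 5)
Your proposal is correct and follows essentially the same route as the paper: both parts of Lemma~\ref{comp.lemma.1} are applied, with the two bounds in \eqref{PYS} used respectively to absorb the additive $\xi$ and the anemic term into multiplicative $(1-n^{-\epp})$ losses, plus the trivial monotone coupling for the parenthetical inequality. The only difference is cosmetic (you chain the estimates directly to the inequalities in \eqref{PYUX}, while the paper records the cyclic triple $\pr(\bX\in\mS)\succ\pr(\bU\in\mS)$, $\pr(\bZ\in\mS)\succ\pr(\bX\in\mS)$, $\pr(\bU\in\mS)\succ\pr(\bZ\in\mS)$), and the slight abuse in your final display is covered by the paper's convention that distinct $\epp$'s need not agree.
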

\nin
(Like \eqref{prXA},
\eqref{PYS} should be considered a minor annoyance.)

\begin{proof}
This is three applications of Lemma~\ref{comp.lemma.1}:
the second part of (a) of the lemma gives
$
\pr(\bX\in \mS) \succ \pr(\bU\in \mS);
$
(b), with the second bound in \eqref{PYS}, gives
$
\pr(\bZ\in \mS) \succ  \pr(\bX\in \mS);
$
and combining this with the first bound in \eqref{PYS}, and using 
the first part of (a), gives
$
\pr(\bU\in \mS) \succ  \pr(\bZ\in \mS).  
$
\end{proof}

\begin{cor}\label{betacor}
$\gb(\K)\more\exp[-n^\gd].$
\end{cor}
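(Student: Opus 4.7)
The plan is to transfer from the (intractable) uniform model $\bU_\K$ to the (manageable) independent-edge surrogate $\bZ_\K = \K_q$ via Lemma~\ref{comp.lemma.1}(a), and then apply FKG together with the Chernoff bound underlying \eqref{deg.dev.bd}.

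First, $\mL$ is increasing (adding edges preserves the lower bounds on degrees), so Lemma~\ref{comp.lemma.1}(a), applied with $\g=\K$ and $\mS=\mL$, gives
\[
\gb(\K)\;=\;\pr(\bU_\K\in\mL)\;\geq\;\pr(\bZ_\K\in\mL)-\xi.
\]
Next, since $\bZ_\K=\K_q$ is a product measure on the edges of $\K$ and each event $\{d_{\bZ_\K}(x)\geq\gd_x\}$ is increasing in the underlying edge indicators, the FKG (Harris) inequality yields
\[
\pr(\bZ_\K\in\mL)\;=\;\pr\Bigl(\bigcap_{x\in V}\{d_{\bZ_\K}(x)\geq\gd_x\}\Bigr)\;\geq\;\prod_{x\in V}\pr(d_{\bZ_\K}(x)\geq\gd_x).
\]

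By the Chernoff calculation sketched immediately after \eqref{deg.dev.bd}, each factor is at least $1-n^{-1+\gd'}$ for some fixed $\gd'<\gd$: the true per-vertex tail exponent is $-1+\eps\log(e/\eps)+o(1)$, and $\eps\log(e/\eps)<\eps\log(3/\eps)=\gd$ (since $e<3$). Hence
\[
\pr(\bZ_\K\in\mL)\;\geq\;(1-n^{-1+\gd'})^n\;\geq\;\exp[-n^{\gd'}(1+o(1))],
\]
which dominates $\exp[-n^\gd]$ by a factor tending to $\infty$. Finally, by \eqref{xi} and the choice $\gl>\gd$ in \eqref{vrr}, we have $\xi=\exp[-\gO(n^{\gl}\log n)]\ll\exp[-n^\gd]$, so subtracting $\xi$ costs nothing, and we conclude $\gb(\K)\more\exp[-n^\gd]$.

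There is no substantive obstacle: the content of the argument is that the deliberate slack built into the definition $\gd=\eps\log(3/\eps)$ comfortably absorbs the $1\to n$ blowup from applying a per-vertex tail bound to every vertex, while the $\bU_\K\!\leftrightarrow\!\bZ_\K$ comparison is already handed to us by Lemma~\ref{comp.lemma.1}(a) and FKG takes care of the degree-event correlations in the product-measure surrogate. The only point requiring care, which is why the bound~\eqref{deg.dev.bd} is stated with slack in the first place, is confirming that the sharper Chernoff exponent strictly beats $\gd$.
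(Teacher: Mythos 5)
Your proposal is correct and follows essentially the paper's own argument: a per-vertex Chernoff bound for $\bZ_\K$, Harris' (FKG) inequality to multiply the vertex events, and Lemma~\ref{comp.lemma.1}(a) to transfer back to $\bU_\K$, with $\xi$ negligible. The only (harmless) difference is that you track the slightly sharper exponent $\eps\log(e/\eps)<\gd$, whereas the paper simply uses the bound $n^{-1+\gd}$ and notes $(1-n^{-1+\gd})^n\sim\exp[-n^\gd]$.
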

\nin
(In fact $\gb(\K)\sim\exp[-n^\gd]$, but we don't need this.)

\begin{proof}
With $\bZ=\bZ_\K$, Theorem~\ref{T2.1} 
gives
\[
\pr(d_{\bZ}(x)<\gd_x) < n^{-1+\gd}
\]
(the calculation, which is essentially the same as that for \eqref{deg.dev.bd},
is valid provided $\vr=o(1)$),
and combining this with Harris' Inequality \cite{Harris} yields
\[
\pr(\bZ\in \mL) >(1- n^{-1+\gd})^n \sim \exp[-n^\gd].
\]
The corollary then follows from Lemma~\ref{comp.lemma.1}(a) (with $\rho_x=\gd_x$
and $\mS=\mL$, the subtracted $\xi$ in \eqref{prYQ} being obviously irrelevant here).
\end{proof}

Say $x$ is \emph{dangerous for $\J$} ($\sub \K$) if 
\beq{dangerous}
d_\J(x)<1.5\eps D_\J
\enq
(recall $D_\J$ is average degree in $\J$).
For the next lemma we: assume $\JJJ,\JJJ'\sub \K$, $\eee=\JJJ\cap\JJJ'$,
\beq{mmT}
|\J|=|\J'|=m\geq (1+\vr)m_{_T},
\enq
\[
\mbox{$\JJJ\sm \eee =\{A_1\dots A_\gs \}~$ and $~\JJJ'\sm \eee =\{B_1\dots B_\gs \}$,}
\]
with
\beq{Aimg}
\mbox{$\{A_1\dots A_\gs \}$ a matching}
\enq
and
\beq{t.not.large}
(1\leq ) ~\gs= n^{o(1)};
\enq
let $\kappa$ be the number of $A_i$'s containing vertices that are
dangerous for $\JJJ$;
and set $W=A_1\cup\cdots\cup A_\gs$, noting that
\beq{Wcounts}
\mbox{$|W|\leq \gs r~$ and $~|\{x\in W: x ~\text{dangerous for} ~\JJJ\}|\leq \kappa r.$}
\enq

\begin{lemma}\label{ZZ'lemma}
With the above setup, assume: $\JJJ'\in \mL$;
\beq{nodang}
\mbox{if $m>2rm_{T}$ then $d_\eee(x)\geq 1.5\eps D_\JJJ ~\forall x\in W$}
\enq
(so $\kappa=0$); 
and 
\beq{betaD}
\gb(\JJJ) > n^{\gO(\gs)} \max\{\xi,\pr(\bX_{\J}~ \mbox{anemic})\},
\enq
where \emph{the implied constant doesn't depend on $\eps$}.
Then
\beq{Z'Z}
\gb(\JJJ')\succ n^{-O(\kappa\eps)}\gb(\JJJ)
\enq
(where, of course, the implied constant is again universal).
\end{lemma}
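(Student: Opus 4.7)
The plan is to reduce \eqref{Z'Z} to a binomial-level comparison of $\bX_\J$ and $\bZ_{\J'}$ and then carry that out by coupling the two random $r$-graphs on their common edge set $\eee$. For the reduction, apply Lemma~\ref{comp.lemma.1}(a): its first inequality (for $\J'$) gives $\gb(\J') \geq \pr(\bZ_{\J'} \in \mL) - \xi$, and its second (for $\J$) gives $\pr(\bX_\J \in \mL) \geq (1-\xi)\gb(\J)$.  Since \eqref{betaD} (together with $\kappa\leq \gs$) forces $\xi$ to be much smaller than $n^{-O(\kappa\eps)}\gb(\J)$, \eqref{Z'Z} reduces to showing
\begin{equation*}
\pr(\bZ_{\J'} \in \mL) \;\succ\; n^{-O(\kappa\eps)}\pr(\bX_\J \in \mL).
\end{equation*}

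For this, couple $\bX_\J$ and $\bZ_{\J'}$ as follows. Sample $\bY_0 \subseteq \eee$ at inclusion rate $q=(1-\vrr)m_{_T}/m$ and, independently, $\bY_1\subseteq\eee\setminus\bY_0$ at rate $(p-q)/(1-q)$; set $\bX_\J\cap\eee = \bY_0\cup\bY_1$ and $\bZ_{\J'}\cap\eee = \bY_0$; and independently include each $A_i$ in $\bX_\J$ with probability $p$ and each $B_i$ in $\bZ_{\J'}$ with probability $q$. Since $\vrr=n^{-(1-\gl)/2}$ is tiny, $\bY_1=\emptyset$ except on an event of probability $o(n^{-\epp})$, so I work on $\{\bY_1=\emptyset\}$. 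Conditioned on $\bY_0$, the matching property \eqref{Aimg} makes $\pr(\bX_\J\in\mL\mid\bY_0)$ factor cleanly as $\mathbf{1}[\mE(\bY_0)]\cdot p^{|X(\bY_0)|}$, where $\mE(\bY_0) = \{\forall x\notin W:\,d_{\bY_0}(x)\geq\gd_x\}$ and $X(\bY_0) = \{i:A_i\text{ contains some $x$ with }d_{\bY_0}(x)<\gd_x\}$. For $\bZ_{\J'}$, since $\{B_i\}$ need not be a matching, I lower-bound $\pr(\bZ_{\J'}\in\mL\mid\bY_0)$ by restricting to the event that all $B_i$'s are included (costing $q^\gs = n^{-o(1)}$ by \eqref{t.not.large}) and reading off the corresponding $\bY_0$-degree conditions.

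The per-vertex comparison then splits into three classes. For $x\notin W$, the $\bZ_{\J'}$-constraint $d_{\bY_0}(x) + |\{i: x\in B_i,\, B_i\in\bZ_{\J'}\}|\geq\gd_x$ (with the second term $\geq 0$) is at least as easy as the $\bX_\J$-constraint $d_{\bY_0}(x)\geq\gd_x$. For $x\in W$ non-dangerous for $\J$, \eqref{nodang} (when $m>2rm_{_T}$) or the low-$m$ definition of non-danger (where $D_\J=\Theta(\log n)$ and $p,q=\gO(1)$) gives $d_\eee(x)\geq 1.5\eps D_\J$, so Theorem~\ref{T2.1} gives $d_{\bY_0}(x)\geq\gd_x$ with probability $\geq 1-n^{-\epp}$, making the constraint essentially automatic for both processes. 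For $x\in W$ dangerous (at most $r\kappa$ such $x$'s), the vertex contributes at most a factor $n^{-O(\eps)}$ to the ratio---coming from a tail estimate of the shape \eqref{deg.dev.bd} together with the fact that $\J'\in\mL$ forces enough $B_i$-coverage of $x$. Combining via Harris' inequality and integrating over $\bY_0$ yields the displayed inequality.

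The main obstacle is the dangerous-vertex analysis: because the $B_i$'s are not assumed to form a matching, their coverage of the $\leq r\kappa$ dangerous vertices and their inclusion indicators are intricately correlated. Arranging the joint $(\bY_0,\{B_i\in\bZ_{\J'}\})$-behavior so as to meet each threshold $\gd_x$ at dangerous $x$'s while paying only the target factor $n^{-O(\kappa\eps)}$ in the ratio is the technical heart of the proof.
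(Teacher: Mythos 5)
Your reduction to the comparison $\pr(\bZ_{\J'}\in\mL)\succ n^{-O(\kappa\eps)}\pr(\bX_{\J}\in\mL)$ via Lemma~\ref{comp.lemma.1}(a) is fine in principle, but the coupling you propose to prove it collapses at its key step. In your coupling, $\bY_1$ includes each edge of $\eee\sm\bY_0$ independently with probability $(p-q)/(1-q)$, and $|\eee\sm\bY_0|=\Theta(m)$, so $\E|\bY_1|\approx (p-q)\,|\eee| \approx 2\vr m_{_T}=\Theta(n^{(1+\gl)/2}\log n)$; hence $\pr(\bY_1=\0)=\exp[-n^{\gO(1)}]$, not $1-o(n^{-\epp})$, and you cannot ``work on $\{\bY_1=\0\}$'' (conditioning on that event also forces $\bX_\J\cap\eee$ down to rate $q$, so it says nothing about the unconditional $\pr(\bX_\J\in\mL)$). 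The $p$-versus-$q$ discrepancy is exactly where the anemic machinery of Lemma~\ref{comp.lemma.1}(b) (packaged in Corollary~\ref{comp.cor}) is needed: one must show that the roughly $2\vr m_{_T}$ deleted edges rarely push any vertex below its threshold, which requires the bound on the number of low-degree vertices supplied by the $\pr(\bX_\J~\mbox{anemic})$ term in \eqref{betaD} --- a hypothesis your argument never invokes, a sign that its content has been absorbed into an unproved step. The paper sidesteps your difficulty by comparing $\bZ_{\J'}$ with $\bZ_{\J}$ (same rate $q$, coupled to agree on $\eee$), reducing \eqref{Z'Z} to $\pr(\bZ_{\J'}\in\mL)\succ n^{-O(\kappa\eps)}\pr(\bZ_{\J}\in\mL)$ by Corollary~\ref{comp.cor} together with Lemma~\ref{comp.lemma.1}(a).

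The second gap is that the dangerous-vertex estimate --- the sole source of the factor $n^{-O(\kappa\eps)}$ --- is exactly what you leave unproved (``the technical heart''). In fact it is not delicate: on $\{\bZ_\J\in\mL\}$ only vertices of $W$ need checking for $\bZ_{\J'}$ (for $x\notin W$ one has $\J_x=\eee_x\sub\J'_x$, so the coupled degrees can only grow); Harris' Inequality then lower-bounds the conditional probability by $\prod_{x\in W}\pr(d_{\bZ_{\J'}}(x)\geq\gd_x)$; at a dangerous $x$ one simply forces $\gd_x$ specified edges of $\J'_x$ (which exist since $\J'\in\mL$) into $\bZ_{\J'}$ at cost $q^{\gd_x}=n^{-O(\eps)}$, legitimate because \eqref{nodang} confines dangerous vertices to the regime $m\leq 2rm_{_T}$, where $q=\gO(1)$; non-dangerous $x\in W$ cost only $1-n^{-\gO(\eps)}$ by Chernoff. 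No joint analysis of the $B_i$'s is required (they are not assumed to form a matching in the paper either). Finally, a smaller error: your claimed factorization of $\pr(\bX_\J\in\mL\mid\bY_0)$ is wrong as stated, since for $x\in W$ with $d_{\bY_0}(x)<\gd_x-1$ the conditional probability is $0$, not $p^{|X(\bY_0)|}$ --- the unique $A_i$ containing $x$ can raise its degree by at most one.
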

\nin
\emph{Notes.}
The assumptions in \eqref{t.not.large} 
and \eqref{betaD}, while supporting the lemma,
are much weaker than what we will have when we come to use it;
on the other hand, the $\eps$ in \eqref{Z'Z} will be critical at one point (see \eqref{Pg0}).
Of course the ``$\succ$'' (vs.\ ``$>''$)
in \eqref{Z'Z} is unnecessary
if $\kappa\neq 0$.
(It is only in our first application of Lemma~\ref{ZZ'lemma}---in the proof of \eqref{Exit}
in Section~\ref{Expectations}---that the precision of ``$\succ$'' in \eqref{Z'Z} is needed.)

\begin{proof}
Recalling that $\gb(\h)=\pr(\bU_\h\in\mL)$,
we first observe that it suffices to show \eqref{Z'Z} with $\bZ$ in place of $\bU$; that is,
\beq{THB}
\pr(\bZ_{\JJJ'}\in \mL) \succ n^{-O(\kappa\eps)}\pr(\bZ_{\JJJ}\in \mL).
\enq
To see that this is enough, notice that Corollary~\ref{comp.cor}, with $\mS=\mL$ (and $\rho_x=\gd_x$) 
and
\eqref{PYS} given by \eqref{betaD} and Lemma~\ref{comp.lemma.1}(a)
(the latter to say $\pr(\bX_\J\in \mS)\more \gb(\J)$),
gives
\beq{THBB}
\pr(\bZ_{{\JJJ}}\in \mL)\succ \gb({\JJJ});
\enq
that this with \eqref{THB} and \eqref{betaD} gives
\[  
\pr(\bZ_{{\JJJ}'}\in \mL)
> n^{-O(\kappa\eps)+\gO(\gs)}\xi \,\,\,(> n^{\epp} \xi),  
\]   
which by Lemma~\ref{comp.lemma.1}(a) gives
\beq{BHA}
\gb({\JJJ}')\succ\pr(\bZ_{{\JJJ}'}\in \mL);
\enq
and, finally, that the combination of \eqref{BHA}, \eqref{THB} and \eqref{THBB}
gives \eqref{Z'Z}.

\mn

For the proof of \eqref{THB} set $\bZ_{\JJJ}=\bZ$ and $\bZ_{{\JJJ}'}=\bZ'$.  We may
couple these by taking
$
\bZ'\cap \eee=\bZ\cap \eee,
$
with the remaining decisions (those involving the $A_i$'s and $B_i$'s) made independently,
and show a mild strengthening of \eqref{THB}:
\beq{Y'Y}
\pr(\bZ'\in \mL|\bZ\in \mL)\succ n^{-O(\kappa\eps)}.
\enq
Here Harris' Inequality gives
\begin{eqnarray}
\pr(\bZ'\in \mL|\bZ\in \mL) &=&\pr (d_{\bZ'}(x)\geq \gd_x~\forall x\in W|\bZ\in \mL) \nonumber\\
&\geq &\mbox{$\prod_{x\in W}\pr (d_{\bZ'}(x)\geq \gd_x)$.}\label{prodxW}
\end{eqnarray}
On the other hand, now using ${\JJJ}'\in \mL$ and \eqref{nodang}
(the latter just to say that if the first case in \eqref{dYdx} is not vacuous
then $q$ ($=(1-\vr)m_{_T}/m$) $=\gO(1)$), we have,
for $x\in W$,
\beq{dYdx}
\pr(d_{\bZ'}(x)\geq \gd_x)\geq \left\{\begin{array}{ll}
q^{\gd_x}=n^{-O(\eps)}&\mbox{if $d_\eee(x)< 1.5\eps D_\J-1$,}\\
1-n^{-\gO(\eps)}&\mbox{otherwise.}
\end{array}\right.
\enq
Here the second bound is given by Theorem~\ref{T2.1} since 
$\E d_{\bZ'}(x)\more 1.5\eps\log n$, as follows from
$d_{{\JJJ}'}(x)\geq d_{\eee}(x)\geq  1.5\eps D_{{\JJJ}'}-1$.
Finally, inserting the bounds from \eqref{dYdx} in \eqref{prodxW}, 
noting that \eqref{Aimg} implies that all $x$'s in the first part of \eqref{dYdx} 
are dangerous for $\J$, and using
\eqref{t.not.large} and \eqref{Wcounts}, gives \eqref{Y'Y}.\end{proof}

\section{Generics}\label{SecR}

Here we define the property $\mR$, with
most of the discussion supporting \eqref{Ri}
postponed to Section~\ref{Foundation}.
As we will do elsewhere (see Sections~\ref{BandR}, \ref{More} and \ref{PLF1}),
we give the definition for a general $r$-graph $\h$, 
with $n:=|V(\h)|$ and $m:=|\h|\gg n$,
so average degree
\[    
D_\h= mr/n.
\]   
The event $\mR_t$ of Section~\ref{Skeleton} is then $\{\bH_t\in \mR\}$.
(In line with our ``default,'' we may think of $t= \C{n}{r}-m$, but, 
apart from a pseudo-exception at \eqref{m3mT},
the present definitions don't involve $t$.)

We number the several requirements for 
$\mR$---thus $\mR=\mR^0\cap\cdots\cap \mR^4$
(with, again, $\mR^i_t = \{\bH_t\in \mR^i\}$)---to
allow later pointers to what exactly is being used.
The first of these, $\mR^0$ consists of standardish genericity conditions for degrees; \emph{viz.}
\beq{Rg0}
\mbox{a.a.\ degrees in $\h$ are asymptotic to $D_\h$}
\enq
(formally:  there is $\vs=o(1)$ such that $d_\h(x)=(1\pm \vs)D_\h$ for all but $(1-\vs)n$ vertices $x$);
\beq{Rg1}
\gD_\h =O(D_\h), ~~~\gd_\h =\gO(\eps D_\h);
\enq
and
\beq{Rg2}
\mbox{$\max d_\h(x,y)=o(D_\h)$}
\enq
(the max over distinct $x,y\in V$).

Note that $\mR^0$ is robust in that, for any fixed $C$,
\beq{robust}
\mbox{if $\h$ satisfies $\mR^0$ then so does $\h-Z$ for each $Z\sub V$ with $|Z|\leq C$.}
\enq
(Note---though it doesn't matter---this refers to a slightly changed $n$ and $m$.)
We omit the easy justification, just noting that \eqref{Rg1} implies
$D_{\h-Z}\sim D_\h$ and that \eqref{Rg0} and (the lower bound in) \eqref{Rg1}
for $\h-Z$ depend on having \eqref{Rg2} for $\h$. 
We will only use \eqref{robust} with $Z$ a member of $\K$ or the union of two 
disjoint members.

\mn

For the next item, $m_{_T}$ is the $M=M(n)$ of Theorem~\ref{ThmZ}
(so the definition makes sense for a general $\h$, though we will use it only with $\h=\bH_t$).
Say $\h$ is in $\mR^1$ provided that
\beq{m3mT}
\mbox{if $|\h|> 2rm_{_T}$ then $\gd_\h > 2\eps D_\h $}.
\enq
(So---irrelevantly---smaller $\h$ are automatically in $\mR^1$.
This item is minor but will sometimes allow us
to focus on $|\h|$ closer to $m_{_T}$, which is where most of the interest lies.
Of course for $\h=\bH_t$ \eqref{m3mT} implies the second part of \eqref{Rg1},
since $\bH_t\in\mL$.)

\mn

That 
\beq{Ri'}
\pr(\cap_{t\leq T}(\mR^0_t\cap \mR^1_t))\ra 1
\enq
is shown in Section~\ref{AppR}.

\mn

For $Z\sub V$ (and $\h\sub \K$), set
\beq{mKZH}
\mJ_Z(\h)=\{\h'\sub\K:|\h'|=|\h|,\h'-Z=\h-Z\},
\enq
\beq{vpZh}
\vp_Z(\h)=\sum\{\gb(\h'):\h'\in\mJ_Z(\h)\}
\enq
and
\beq{barvpZh}
\barvp_Z(\h)=|\mJ_Z(\h)|^{-1}\vp_Z(\h)
\enq
(the average of $\gb(\h')$ over $\h'$ of size $|\h|$ agreeing with $\h$ off $Z$;
note each of the items in \eqref{mKZH}-\eqref{barvpZh}
is determined by $|\h|$ and $\h-Z$).
Let
\beq{mR1}
\mR^2=\{\h:   \barvp_Z(\h)> n^{-(2r+1)}\gb(\K)\,\,\,\forall Z\in \{\0\}\cup \K\},
\enq
noting in particular (taking $Z=\0$ and recalling Corollary~\ref{betacor}) that
\beq{betaH}
\h\in \mR^2 ~~\Ra ~~ \gb(\h)> n^{-(2r+1)}\gb(\K) > \exp[-(1+o(1))n^{\gd}].
\enq
In Section~\ref{AppR} we will show that for any $Z\sub V$ (and any $t\in [T]$ and $\eta>0$),
\beq{PvpZHT}
\pr(\barvp_Z(\bH_t)<\eta \gb(\K))< \eta,
\enq
whence
\beq{R1bd}
\pr(\exists t ~~\bH_t\not\in \mR^2) < n^{-1}  ~ (=o(1)).
\enq

\mn

With $\ga = m_{_T}/m$ and, as in \eqref{anemic},
$\J$ \emph{anemic} if $d_\J(v) <2\eps \log n$ for
at least $2rn^{2\gd}$ vertices $v$, let
\[   
\mR^3 =\{\h: \pr(\mbox{$\h_\ga$ anemic}) < \exp[-(1-o(1))n^{2\gd}]\}.
\]   
Again in Section~\ref{AppR}, we will show
\beq{prR3}
\pr(\bH_t\not\in \mR^3) < \exp[-(1-o(1))n^{2\gd}].
\enq
We will usually use membership in $\mR^3$ in combination with the next little point.
\begin{obs}\label{mR1Cor}
For fixed $c>0$, $\h$ of size asymptotic to $m$ and $\gz\sim m_{_T}/m$,
if 
\beq{hgzanemic}
\pr(\mbox{$\h_\gz$ anemic}) < \exp[-\gO(n^{2\gd})],
\enq
then
\beq{mR1cor}
|\{x:d_\h(x) < (2-c)\eps D_\h\}|< 2rn^{2\gd}.
\enq
\end{obs}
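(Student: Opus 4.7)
The plan is to argue by contraposition: assuming that the set $S := \{x:d_\h(x) < (2-c)\eps D_\h\}$ has size at least $2rn^{2\gd}$, I will show $\pr(\h_\gz~\mbox{anemic}) \geq \exp[-o(n^{2\gd})]$, which contradicts the hypothesis \eqref{hgzanemic}.

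The key point is that $\gz D_\h \sim \log n$ (since $\gz \sim m_{_T}/m$, $D_\h = r|\h|/n \sim rm/n$, and $m_{_T}\sim (n/r)\log n$), so for each $x \in S$ we have $\E d_{\h_\gz}(x) = \gz d_\h(x) \less (2-c)\eps \log n$, a quantity bounded away from the threshold $2\eps\log n$ defining anemia. The upper Chernoff bound \eqref{eq:ChernoffUpper} of Theorem~\ref{T2.1}, applied to the binomial $d_{\h_\gz}(x) \sim \mathrm{Bin}(d_\h(x),\gz)$, then gives
\[
\pr(d_{\h_\gz}(x) \geq 2\eps\log n) \leq \exp[-\gO((\eps \log n)^2/(\eps\log n))] = n^{-\gO(\eps)}
\]
for each $x \in S$ (with the implied constant in the $\gO$'s depending on $c$).

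Next, fix any $S_0 \sub S$ of size exactly $\lceil 2rn^{2\gd}\rceil$ and, for each $x \in S_0$, let $E_x$ be the decreasing event $\{d_{\h_\gz}(x) < 2\eps\log n\}$. Since each $E_x$ is a decreasing event in the independent edge-inclusions defining $\h_\gz$, Harris' Inequality gives
\[
\pr\bigl(\bigcap_{x\in S_0}E_x\bigr) \geq \prod_{x\in S_0}\pr(E_x) \geq (1-n^{-\gO(\eps)})^{|S_0|} \geq \exp[-|S_0|\,n^{-\gO(\eps)}].
\]
Because $\gd$ is fixed and $n^{-\gO(\eps)} = o(1)$, this lower bound is $\exp[-o(n^{2\gd})]$. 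On the other hand, if every $E_x$ for $x\in S_0$ holds then $\h_\gz$ has at least $|S_0|\geq 2rn^{2\gd}$ vertices of degree less than $2\eps\log n$, i.e.\ $\h_\gz$ is anemic.

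Consequently $\pr(\h_\gz~\mbox{anemic}) \geq \exp[-o(n^{2\gd})]$, which grows much faster than the hypothesized upper bound $\exp[-\gO(n^{2\gd})]$, giving the required contradiction. There is no real obstacle here; the only thing to watch is the exponent arithmetic, which comes out comfortably because $c$ is a fixed positive constant (making the Chernoff saving $n^{-\gO(\eps)}$ a genuine polynomial saving) and $\gd$ is also fixed, so $n^{2\gd}\cdot n^{-\gO(\eps)} = o(n^{2\gd})$.
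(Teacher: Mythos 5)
Your proof is correct and follows essentially the same route as the paper's: argue by contraposition, bound each individual probability $\pr(d_{\h_\gz}(x)\geq 2\eps\log n)$ via Theorem~\ref{T2.1} (using that the mean $\gz d_\h(x)$ is bounded below $2\eps\log n$ by a constant factor), and combine over a $(2rn^{2\gd})$-subset with Harris' Inequality to force $\pr(\mbox{$\h_\gz$ anemic})\geq\exp[-o(n^{2\gd})]$, contradicting \eqref{hgzanemic}. The only difference is cosmetic: you quantify the individual bound as $n^{-\gO(\eps)}$ where the paper is content with $1-o(1)$.
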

\nin
In particular, \eqref{mR1cor} holds when $\h\in \mR^3$.
\begin{proof}
If $W$ is a $(2rn^{2\gd})$-subset of the set in \eqref{mR1cor}, then
Harris' Inequality (with Theorem~\ref{T2.1}) gives
\[
\pr(\mbox{$\h_\gz$ anemic})\geq
\pr(d_{\h_\gz}(x)< 2\eps\log n ~\forall x\in W) > (1-o(1))^{|W|}=\exp[-o(n^{2\gd})],
\]
contradicting \eqref{hgzanemic}.
\end{proof}

For $\mR^4$ we use a parameter $\go$ that tends to infinity \emph{slowly}; 
precisely, we want
\beq{go}
1\ll \go\ll \nu,
\enq 
where conditions on $\nu$ are included in the parameter requirements of
\eqref{gznu}-\eqref{param4}.
Those 
requirements could instead 
have been given here, but really belong in
Section~\ref{PLF3}, where they become relevant.
As explained there, they are functions of
some $\gc=o(1)$ that in turn
depends on the ``quality'' of $\mA$ and $\mR^0$ (e.g.\ the speed of the $o(1)$ in $\mA$),
meaning the most we can ask of $\nu$ is that it tend to infinity; this leaves room
for \eqref{go}, but no more.  (We could also make $\go$ a suitably large constant,
but this feels less natural.)

For $Z\sub V$, let
\beq{peex}
\mD_Z=\{\h\in \K^T:\mbox{$\sum_{y\in V\sm Z}(\gd_y-d_{\h-Z}(y))^+> \go$}\},
\enq
and let $\mD = \cup_{Z\in \K}\mD_Z$ (and $\mD_x=\mD_{\{x\}}$).  
It is shown in Lemma~\ref{LB} (see \eqref{13.9b}) that
\beq{PhTPx}
\pr(\bH_T\in \mD)< n^{-\eta\go}
\enq
for some fixed $\eta>0$
(not depending on $\eps$, though we don't need this).

For $t\in [T]$ and $Z\in \K$,
say $\h\in\K^t$ is in $\mR^4(Z)$ if 
\beq{HTDZ}
\pr(\bH_T\in \mD_Z| \bH_t-Z= \h-Z) < n^{-\eta\go/2},
\enq
and set $\mR^4=\cap_{Z\in\K}\mR^4(Z)$.  Then for any $Z$ ($\in \K$),
\begin{eqnarray}
n^{-\eta\go}&>&\pr(\bH_T\in\mD)\nonumber\\
&\geq &\pr(\bH_t\not\in \mR^4(Z))\pr(\bH_T\in \mD_Z|\bH_t\not\in \mR^4(Z))\nonumber\\
&\geq&\pr(\bH_t\not\in \mR^4(Z))n^{-\eta\go/2}\label{n1-o1},
\end{eqnarray}
implying $\pr(\bH_t\not\in \mR^4(Z))<n^{-\eta\go/2}$ and
\beq{prR2}
\pr(\exists t ~\bH_t\not\in \mR^4) < n^{2r}n^{-\go/2} = n^{-\gO(\go)} ~(=o(1)).
\enq
(To make sense of \eqref{n1-o1}
notice that membership of $\bH_t$ in $\mR^4(Z)$ is decided by
$\bH_t-Z$, and the conditioning says precisely that \eqref{HTDZ} does not hold.)

\mn

To recap:  \eqref{Ri} will follow from
\eqref{Ri'}, \eqref{PvpZHT} (which implies \eqref{R1bd}), \eqref{prR3} and \eqref{PhTPx}
(which implies \eqref{prR2});  as noted above, these are all shown in Section~\ref{Foundation}.

\section{Expectations}\label{Expectations}

In this section we take $m=m_t$ and 
prove \eqref{Exit}; recall this said 
(with $\gc_t=n/(r(m+1))$; see \eqref{gci})
\beq{Exit'}
\h\in \K^{t-1}\cap\mR\cap \mL~ \Ra
~\E [\xi_t| \bH_{t-1}=\h] < (1+n^{-\epp})\gc_t.
\enq

Given $\h$ as in \eqref{Exit'}, set,  for $A\in \h$, 
\[
p_A=p_\h(A)=\pr(A_t=A|\bH_{t-1}=\h) \,\,\,(\propto \gb(\h\sm A); ~\mbox{see Observation~\ref{pr.ww'}})
\]
and
\[\xi_A =\xi_\h(A)=\Phi(\h- A)/\Phi(\h).
\]
Then $\xi_A$ is the fraction of perfect matchings of $\h$ that contain $A$, and
\beq{Exim}
\mbox{$\E[\xi_t|\bH_{t-1}=\h] = \sum p_A\xi_A.$}
\enq
We will show that no $p_A$ is much more than the average;
concretely,
\beq{phA}
p_A < (1+n^{-\epp})/(m+1) \,\,\,\forall A\in \h.
\enq
Since $(m+1)^{-1}\sum_{A\in \h}\xi_A =\gc_t~$
(equivalently, $\sum\Phi(\h-A) = (n/r)\Phi(\h)$),
this bounds the r.h.s.\ of \eqref{Exim} by $(1+n^{-\epp})\gc_t$, as desired.

\begin{proof}[Proof of \eqref{phA}]

Set
\[
\h^0= \{B\in \h: d_\h(x)\geq 1.5\eps D_\h+1\,\,\,\forall x\in B\}
\]
and notice that, by Observation~\ref{mR1Cor} and \eqref{m3mT}
(so we use $\h\in \mR^3\cap\mR^1$),
\[     
|\h^0|\left\{
\begin{array}{ll}
\geq m+1- 3\eps r n^{2\gd}D_\h > (1-n^{-\epp})(m+1)&\mbox{if $m\le 2rm_{_T}$},\\
= m+1&\mbox{otherwise.}
\end{array}\right.
\]    
This implies that for \eqref{phA} it is enough to show
\beq{PHB1}
\gb(\h\sm A)< (1+n^{-\epp})\gb(\h\sm B) \,\,\,\forall B\in \h^0,
\enq
since then
\[
p_A = \frac{\gb(\h\sm A)}{\sum_{B\in \h}\gb(\h\sm B)}
~ \leq~ \frac{\gb(\h\sm A)}{\sum_{B\in \h^0}\gb(\h\sm B)}
~< ~(1+n^{-\epp})/(m+1).
\]

For $m< (1+n^{-3\gd})m_{_T}$ (say), \eqref{PHB1}
is give by Lemma~\ref{Psmallm}, according to which 
$\gb(\h\sm B) > 1-n^{-\epp}$ for any $B\in \h^0$ 
(the lemma's hypotheses, $\h\sm B\in \mL$ and \eqref{xdHs},
following from $B\in \h^0$ (with $\h\in \mL$)
and the combination of $\h\in \mR^3$ and Observation~\ref{mR1Cor} respectively).

\mn

For larger $m$ we show \eqref{PHB1} assuming \eqref{phA} fails (which
suffices for our purposes).
We have
\[
\frac{1}{m+1}\sum_{B\in \h}\gb(\h\sm B)~=~\gb (\h) ~>~\exp[-(1+o(1))n^\gd].
\]
[The inequality holds since $\h\in \mR^2$ (see \eqref{betaH}), and 
for the equality we have, with sums over $B\in \h$ (and $\mL_{_T}=\mL\cap \K^T$),
\begin{eqnarray*}
\sum \gb(\h\sm B)&=& \Cc{m}{m_{_T}}^{-1} \sum |\{\U\in \mL_{_T}: \U\sub \h\sm B\}|\\
&=&
\Cc{m}{m_{_T}}^{-1} (m+1-m_{_T})|\{\U\in \mL_{_T}: \U\sub \h\}| = (m+1)\gb(\h).]
\end{eqnarray*}
We thus have
\beq{PHB}
p_A = \frac{\gb(\h\sm A)}{\sum_{B\in \h}\gb(\h\sm B)}
< \frac{1}{m+1}\frac{\gb(\h\sm A)}{\exp[-(1+o(1))n^\gd]}
\enq
and may assume
\beq{BHB}
\gb(\h\sm A) > \exp[-(1+o(1))n^\gd],
\enq
since otherwise \eqref{PHB} implies \eqref{phA}.

Then for \eqref{PHB1} we apply Lemma~\ref{ZZ'lemma} with
${\JJJ}=\h\sm A$ and ${\JJJ}'= \h\sm B$
(so $\eee=\h\sm\{A,B\}$, $\gs=1$ and, unfortunately, $A_1=B$ and $B_1=A$).
Here $B\in \h^0$ implies $\kappa=0$, so the lemma, if applicable, does give \eqref{PHB1};
but its first hypothesis, \eqref{nodang}, holds because $B\in \h^0$, and its second,
\eqref{betaD}, is a weak consequence of \eqref{BHB}
and $\h\in \mR^3$, which implies
$\pr(\mbox{$\bX_\h$ anemic}) <\exp[-(1-o(1))n^{2\gd}]$.
(Note Lemma~\ref{ZZ'lemma} also assumes the lower bound in \eqref{mmT}, but
here we have the stronger $m\geq (1+n^{-3\gd})m_{_T}$.)
\end{proof}

\section{Properties $\mA$ and $\mB$ }\label{BandR}

As in Section~\ref{SecR}, properties in this section, as well as 
Sections~\ref{More} and \ref{PLF1},
are defined for a general $r$-graph $\h$,
and $\mS_t$ is the event $\{\mbox{$\bH_t\in \mS$}\}$.
Here and in Section~\ref{More}---but not quite in Section~\ref{PLF1}---we
again use $n$ and $m$ 
for the numbers of vertices and edges of $\h$, and $\K$ for $\C{V(\h)}{r}$.

For the remainder of the paper we will 
tend to use $A$ for edges and $Z$ or $U$ for general $r$-sets.
We assume throughout that we have fixed some positive $\eps$
(it will be essentially the one in Theorem~\ref{ThmZ}), upon which
the implied constants in ``$O(\cdot)$" and ``$\gO(\cdot)$" may depend.

\mn

We say $\h$ has the property $\mA$ 
if
\beq{Ag}
\log \Phi(\h) >\log\Phi_0 - \frac{n}{r} \log \left[\C{n}{r}/m\right] -o(n).
\enq
(Recall from \eqref{Em} that for $m=m_t$,
the main subtracted term here is essentially $\sum_{i=1}^t\gc_i$;
so, as promised, $\{\bH_t\in \mA\}$ is the $\mA_t$ of Section~\ref{Skeleton}.)

\mn

For $\mB$ a little notation will be helpful.
For  a finite set $S$ and $\ww: S \rightarrow \Re^+$
($:=[0, \infty)$),
set
\[
\overline \ww (S)= |S|^{-1} \sum_{a\in S} \ww(a),
\]
\[
\max \ww (S)= \max_{a \in S}\ww(a),
\]
and
\[
\maxr \ww(S)= \overline \ww (S)^{-1}\max \ww (S).
\]

For $\h \sub \K$ define $\ww_\h:\K\ra \Re^+$ by
\[
\ww_\h (Z) = \Phi(\h -Z),
\]
and say $\h$ has the property $\mB$ if
\[
\maxr \ww_{\h}  (\h ) =  O(1).
\]
(So $\mB$ says the number of p.m.s containing any particular $A\in \h$ is not too large
compared to the average.  
Note that the implied constant here \emph{does} depend on $\eps$, its natural
value being roughly $1/\eps$:  on average over $A\ni x$, the fraction of p.m.s of 
$\h$ containing $A$ is $1
/d_\h(x)$;
and the $d_\h(x)$'s, while typically around $D_\h$, can be as small as (about) $\eps D_\h$.)

Then $\mB_t$ ($=\{\bH_t\mmodels\mB\}$) is as in Section~\ref{Skeleton}
and \eqref{Bi} is
\beq{Bi*}
\mbox{for $t< T$,
$~~\pr(\bH_t\mmodels \mA\mR\ov{\mB}) =n^{-\go(1)}.$}
\enq
(More formally:  there is a fixed $C$, depending on the $o(\cdot)$'s
and implied constants in $\mA$ and $\mR$, such that
$\pr(\{\bH_t\mmodels\mA\mR\}\wedge\{\maxr\ww_{\bH_t}(\bH_t) > C\})=n^{-\go(1)}$.)

As mentioned at the end of Section~\ref{Skeleton}, \eqref{Bi*}
is shown in Sections~\ref{More}-\ref{PLF3},
and with
\eqref{Ri} (likelihood of the $\mR_t$'s, established in Section~\ref{Foundation})
will complete the proof of Theorem~\ref{ThmZ}.

\mn

We conclude this section with the promised
\beq{proofofxibd}
\mbox{$\mB_{t-1}$ implies
{\rm \eqref{xibd}}.}
\enq
(Recall \eqref{xibd} says $\xi_t=O(\gc_t)$, where, as in \eqref{gci}, 
$\gc_t = n/(r(m+1))$.)
\begin{proof}
If $\bH_{t-1}=\h$, then
$\xi_t\leq\max_{A\in \h} \ww_\h(A)/\Phi(\h)$,
while $\gc_t$ is the average of these ratios, since
\[
\mbox{$\sum_{A\in \h}\ww_\h(A)=\Phi(\h)n/r$}
\]
(and $|\h|=m+1$).
This gives \eqref{proofofxibd}.
\end{proof}

\section{More properties}\label{More}

We will get at $\mB$ 
\emph{via} several auxiliary properties.
We introduce the first three of these here (there will be a couple more in Section~\ref{PLF1}),
together with
assertions concerning them that, as shown below, easily imply \eqref{Bi*}.
As mentioned earlier,
two of these assertions are from \cite{AsSh} and the others (which also have counterparts in 
\cite{AsSh}) are proved in the next four sections.

With $n$ and $\K$ again the size and collection of $r$-subsets
of $V(\h)$, the properties of interest here are:

\begin{itemize}

\item
[$\mC$:] $~~$ \emph{if $Z\in \K$ satisfies}
\beq{WZ}
\ww_\h(Z) > \Phi(\h)e^{-o(n)},
\enq
\emph{then for any $x\in Z$,}
\beq{wZxy}
\mbox{$\ww_\h((Z\sm x)\cup y)\more \ww_\h(Z)d(x)/D_\h~$ for a.e.\ $y\in V\sm Z$;}
\enq

\item
[$\mE$:]
$~~~
\ww_\h (A)\sim \Phi(\h)/D_\h~$ \emph{for a.e.}\ $A\in  \h$;

\item
[$\mF$:]
$
~~~\ww_\h(Z)\sim \Phi(\h)/D_\h~$ \emph{for a.e.}\ $Z \in \K$.

\end{itemize}
(More formally, e.g.\ for $\mE$:
there is $\vs=\vs(n)=o(1)$ such that
$|\{A\in \h:  \ww_\h(A)\neq (1\pm \vs)\Phi(\h)/D_\h\}| < \vs|\h|$.)
For perspective on $\mE$ and $\mF$, notice that
\[    
(\overline \ww_\h(\h)=) ~ |\h|^{-1}\sum_{A\in \h}\ww_\h(A)
= |\h|^{-1} \Phi(\h)n/r =\Phi(\h)/D_\h.
\]    

\mn

For the next little bit we use
\[
\mX \Rastar
\mZ
\]
to mean $\pr(\mX\ov{\mZ})=n^{-\go(1)}$;
e.g.\ the probability bound of \eqref{Bi*} is
\beq{B''}
\{\bH_t\models\mA\mR\}\Rastar \{\bH_t\models \mB\}.
\enq

\mn

The aforementioned assertions are as follows.
(In Lemmas~\ref{LemmaF} and \ref{LemmaC} we assume $t\in [T]$.)

\begin{lemma}\label{LemmaE}
If $\h$ satisfies $\mA\mR^0$ then it satisfies $\mE$.
\end{lemma}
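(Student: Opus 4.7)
The plan is an entropy argument on a uniformly random perfect matching $\mathbf{M}$ of $\h$. Setting $H(\mathbf{M}) = \log \Phi(\h)$, the hypothesis $\mA$ combined with \eqref{mg0r} gives the lower bound
\[
H(\mathbf{M}) \ge \tfrac{n}{r}\log D_\h - \gL - o(n).
\]
For $v\in V$, write $\mathbf{M}(v)$ for the edge of $\mathbf{M}$ at $v$; the edge-marginals $p_{v,A}:=\pr(\mathbf{M}(v)=A)=\ww_\h(A)/\Phi(\h)$ (for $A\in\h_v$) satisfy $\sum_{A\ni v}p_{v,A}=1$.

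The heart of the matter is a sharp entropy upper bound of the form
\[
H(\mathbf{M}) \le \tfrac{1}{r}\sum_{v\in V} H(\mathbf{M}(v)) - \gL + o(n),
\]
obtained by a Shearer/Han-type calculation: expose $\mathbf{M}(v)$ with $v$ processed in a uniformly random order, and note that once $\mathbf{M}(v)$ is revealed, $\mathbf{M}(u)$ is pinned down for every $u\in\mathbf{M}(v)$, so only the \emph{first} vertex of each edge of $\mathbf{M}$ contributes nonzero conditional entropy. Averaging over orderings produces the factor $1/r$, and a more careful accounting of the conditional law of $\mathbf{M}(v)$ given the already-revealed edges supplies the additive $-\gL$. (This is exactly the inequality that is asymptotically tight for the uniform PM of $\K$, matching \eqref{mg0r}; it is what Section~4 of \cite{AsSh} establishes.) Applying Jensen vertex by vertex writes $H(\mathbf{M}(v))=\log d_\h(v) - D(p_{v\cdot}\,\|\,u_{v\cdot})$, with $u_{v\cdot}$ uniform on $\h_v$; invoking $\mR^0$ ($d_\h(v)\sim D_\h$ for a.e.\ $v$, with $\gD_\h=O(D_\h)$ absorbing atypical vertices) this becomes
\[
H(\mathbf{M}) \le \tfrac{n}{r}\log D_\h - \tfrac{1}{r}\sum_v D(p_{v\cdot}\,\|\,u_{v\cdot}) - \gL + o(n).
\]

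Comparing with the lower bound forces $\sum_v D(p_{v\cdot}\|u_{v\cdot}) = o(n)$. Pinsker's inequality and a Markov step then extract that for a $1-o(1)$ fraction of vertices $v$, and a $1-o(1)$ fraction of edges $A\in\h_v$, $p_{v,A}=(1\pm o(1))/d_\h(v)$; since $d_\h(v)\sim D_\h$ by $\mR^0$, this reads $\ww_\h(A)\sim\Phi(\h)/D_\h$. A double-counting argument over vertex-edge incidences---using $\gD_\h=O(D_\h)$, $\gd_\h=\gO(\eps D_\h)$, and the codegree bound \eqref{Rg2} to control the ``bad'' edges (those containing an atypical vertex, or failing the marginal bound at some $v\in A$)---upgrades ``a.e.\ pair $(v,A)$'' to ``a.e.\ $A\in \h$,'' which is $\mE$.

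The main obstacle is the sharp entropy upper bound: naive subadditivity $H(\mathbf{M})\le \sum_v H(\mathbf{M}(v))$ is off by a factor of $r$ and also misses the additive $-\gL$ entirely, leaving a gap of order $n$ to the lower bound and giving no concentration whatsoever. Securing both the factor $1/r$ and the $-\gL$ correction---which together make the bound asymptotically tight for $\K$---is precisely what lets the $\mA$-lower-bound bite and force the divergences $D(p_{v\cdot}\|u_{v\cdot})$ to be $o(n)$ on average.
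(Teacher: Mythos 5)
First, for context: the paper does not prove Lemma~\ref{LemmaE} here at all --- it is quoted from \cite{AsSh} (Lemma~6.1 there, established in Sections~4 and~8 of that paper by an entropy argument), and the present paper emphasizes that this lemma was one of the main points of \cite{AsSh}. So your outline is at least in the same (entropy) spirit as the proof the paper points to, and your surrounding steps are sound: the lower bound $\log\Phi(\h)\ge\frac{n}{r}\log D_\h-\gL-o(n)$ does follow from $\mA$ and \eqref{mg0r}, the Markov/Pinsker extraction of near-uniform vertex marginals from $\sum_v D(p_{v\cdot}\|u_{v\cdot})=o(n)$ works, and the incidence double count (an edge escapes certification only if it is bad at all $r$ of its vertices, so $\mR^0$ gives $o(m)$ such edges) correctly upgrades this to $\mE$.

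The genuine gap is at the step you yourself identify as the heart of the matter, the claimed bound $H(\mathbf{M})\le\frac1r\sum_v H(\mathbf{M}(v))-\gL+o(n)$, which in your write-up is asserted (with a pointer to \cite{AsSh}) rather than proved, and the heuristic you sketch for it does not deliver it. The standard random-order chain-rule argument (as in \cite{JKV}) bounds each term $H(\mathbf{M}(v)\mid\text{past})$ by the logarithm of the number of \emph{still-available} edges at $v$, and the $\sigma^{r-1}$-shrinkage computation (which already needs the degree and codegree control of $\mR^0$, not merely ``averaging over orderings'') then yields only $H(\mathbf{M})\le\frac1r\sum_v\log d_\h(v)-\gL+o(n)$. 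That bound has discarded the marginal laws entirely, so comparing it with the $\mA$ lower bound gives no control of the divergences $D(p_{v\cdot}\|u_{v\cdot})$; and one cannot simply substitute $H(\mathbf{M}(v))+(r-1)\log\sigma$ for $\log(\text{available degree})$, because the conditional law of $\mathbf{M}(v)$ given the past (and given that $v$ is first in its edge at a given time) is a tilted version of the marginal --- conditioning on that survival event can even increase entropy --- so no formal monotonicity closes this. Retaining a defect term that sees the unconditional marginals is exactly the hard content of Sections~4 and~8 of \cite{AsSh}, i.e.\ of the lemma being proved; note also that your inequality as stated is false without largeness-of-degree hypotheses (for $\h$ a single perfect matching it reads $0\le-\gL+o(n)$), so any correct formulation must have the $\mR^0$-type assumptions built in, which your sketch uses only implicitly. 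In short: right architecture, but the one inequality carrying all the difficulty is not established by the argument you give.
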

\begin{lemma}\label{LemmaF}
With $\wedge_Z$ ranging over $Z$ as in \eqref{WZ},
\[   
\mbox{$\{\bH_t\models \mA\mR\} \Rastar \{\bH_t\models \mF\}
\wedge\bigwedge_Z\{\bH_t-Z\models \mF\}.$}
\]   
\end{lemma}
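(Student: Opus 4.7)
The plan is to treat both conclusions in parallel by establishing $\bG \models \mF$ for a generic target hypergraph $\bG \in \{\bH_t\} \cup \{\bH_t - Z : Z\ \text{satisfies } \eqref{WZ}\}$; a union bound over the at most $|\K| = n^{O(1)}$ relevant $Z$'s is absorbed into the final $n^{-\omega(1)}$.

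First I would verify deterministically that $\bG \models \mA\mR^0$ whenever $\bH_t \models \mA\mR$ and $Z$ satisfies \eqref{WZ}. The $\mR^0$ part for $\bG = \bH_t - Z$ is the robustness \eqref{robust}. For $\mA$ in that case, $\Phi(\bG) = \ww_{\bH_t}(Z) > \Phi(\bH_t) e^{-o(n)}$ by \eqref{WZ}, and the reference quantities $\log \Phi_0$, $\log \C{n}{r}$ shift only by $O(\log n) = o(n)$ under $n \mapsto n-r$, so combined with $\mA$ for $\bH_t$ this gives $\mA$ for $\bG$ (with mildly worse $o(n)$). Lemma~\ref{LemmaE} then yields $\bG \models \mE$: $\ww_\bG(A) \sim \Phi(\bG)/D_\bG$ for a.e.\ $A \in \bG$. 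Separately, Lemma~\ref{LemmaC}, applied in the ``parallel setup'' of Section~\ref{PLF1}, which is precisely what lets us treat $\bG = \bH_t - Z$ rather than only $\bH_t$, gives, with probability $1 - n^{-\omega(1)}$, $\bG \models \mC$ for every relevant $\bG$ simultaneously. Henceforth I would reason deterministically assuming $\bG \models \mA\mR^0\mC\mE$.

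The core step is a swap-propagation argument deducing $\mF$ from $\mE + \mC$. For the lower bound on $\ww_\bG(Z)$ for a.e.\ $Z \in \C{V(\bG)}{r}$: start from a typical $A \in \bG$ (so $\ww_\bG(A) \sim \Phi(\bG)/D_\bG$ by $\mE$) and iteratively swap one vertex at a time, replacing $x_i$ in the current set (typical, so $d(x_i) \sim D_\bG$ by $\mR^0$) with some $y_i$ outside it; $\mC$ applies at each step because the running weight stays $\sim \Phi(\bG)/D_\bG$, which exceeds $\Phi(\bG) e^{-o(n)}$ ($D_\bG = n^{O(1)}$), and the step costs a factor $(1 \pm o(1)) d(x_i)/D_\bG = 1 \pm o(1)$. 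After $r$ swaps the resulting $Z$ satisfies $\ww_\bG(Z) \more \Phi(\bG)/D_\bG$, and as $A$ ranges over typical edges and each $(x_i, y_i)$ over a.e.\ choices, the covered $Z$ exhaust a.e.\ element of $\C{V(\bG)}{r}$. For the upper bound, a $Z$ violating the target has $\ww_\bG(Z) > (1+o(1))\Phi(\bG)/D_\bG$, hence satisfies \eqref{WZ}, so I can run the same procedure starting at $Z$ to reach a typical edge $A \in \bG$; $\ww_\bG(A) \sim \Phi(\bG)/D_\bG$ then forces $\ww_\bG(Z) \leq (1+o(1))\Phi(\bG)/D_\bG$.

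The hardest part is the ``parallel setup'' flagged by the authors: $\ww_{\bH_t - Z}(\cdot)$ counts p.m.s of $V \setminus Z$, not of $V$, so $\mC$ for $\bH_t - Z$ is genuinely not a corollary of Lemma~\ref{LemmaC} for $\bH_t$, and the underlying martingale analysis has to be run on the whole family $\{\bH_t - Z : Z \in \K\}$ at once — the added complexity being the main difference from the corresponding argument in \cite{AsSh}. A secondary obstacle is careful accounting for the accumulated ``a.e.'' losses along the $r$-step swap chain (atypical $y_i$'s at each step combining with atypical vertices from $\mR^0$ and atypical edges from $\mE$), so that the set of ``good'' final endpoints still covers a $1 - o(1)$ fraction of $\C{V(\bG)}{r}$; this bookkeeping is what Sections~\ref{PLF2}-\ref{PLF3} are dedicated to.
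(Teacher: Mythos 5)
There is a genuine gap: your argument is circular. Your core step deduces $\mF$ for $\bG$ from $\mE$ together with $\mC$, and you propose to obtain $\mC$ from Lemma~\ref{LemmaC}. But the hypothesis of Lemma~\ref{LemmaC} is $\{\bH_t\models\mR\}\wedge\{\bH_t-Z\models\mF\}$: the property $\mF$ for $\bH_t-Z$ --- exactly the harder half of what Lemma~\ref{LemmaF} asserts --- is an \emph{input} to Lemma~\ref{LemmaC}, not something available beforehand. In the paper's architecture the order is the reverse of yours: Lemma~\ref{LemmaF} is proved first, with no appeal to $\mC$, and only then does Lemma~\ref{LemmaC} convert $\{\bH_t-Z\models\mF\}$ into \eqref{wZxy}, after which Lemma~\ref{RCEFB} combines $\mR^0\mF\mC$ into $\mB$. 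Moreover, even granting its hypothesis, Lemma~\ref{LemmaC} yields \eqref{wZxy} for $(\bH_t,Z,x)$, i.e.\ property $\mC$ for $\bH_t$ itself; it says nothing about $\mC$ for $\bH_t-Z$, which your swap chain inside $\bG=\bH_t-Z$ would need (and which would in turn require $\mF$ for $\bH_t-(Z\cup Z')$, outside the scope of the lemma altogether). So the proposal assumes, directly or one level removed, the statement being proved.

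Separately, even with $\mC$ granted, the swap-propagation step is shakier than you indicate. The direction ``edge $\to$ a.e.\ $Z$'' is plausible, but the upper-bound direction requires steering a chain of swaps from an arbitrary heavy $Z$ so that it \emph{lands on} a typical edge of $\bG$; since edges form only an $m/\C{n}{r}$-fraction of $\K$ and $\mC$ only guarantees ``a.e.\ $y$'' at each step (and only one-sided bounds, with factors $d(x)/D_\h$ that can be as small as $\gO(\eps)$ when removed vertices are atypical), there is no obvious way to hit $\bG$ while avoiding the exceptional sets, and no averaging identity over all $Z\in\K$ to fall back on. This is precisely why the paper's proof is genuinely probabilistic: it fixes $\theta$, passes to the property $\mQ$, splits $\bG=\bF\cup\bT$ with $\tau=|\bT|$ as in \eqref{tau}, and plays off the two ways of generating $(\bG,\bT)$, proving \eqref{U|Q1}, \eqref{U|Q2} and \eqref{Ubd} (the last requiring the comparison Lemma~\ref{prmu} because, given $\bF$, the law of $\bT$ is reweighted as in \eqref{reweightT}). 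Your description of Sections~\ref{PLF2}--\ref{PLF3} as ``a.e.''-bookkeeping for a swap chain does not correspond to what is needed, and the deterministic $\mE+\mC\Ra$ conclusion you run is closer in spirit to Lemma~\ref{RCEFB} (which yields only $\mB$, i.e.\ $O(1)$ control, not the asymptotics of $\mF$).
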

\nin

\begin{lemma}\label{LemmaC}
For $x\in Z\in \K$,
\beq{htAR}
\{\bH_t\models \mR\}\wedge \{\bH_t-Z\models \mF\}\Rastar \{(\bH_t,Z,x)\models \eqref{wZxy}\}.
\enq
\end{lemma}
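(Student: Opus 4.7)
Fix $\h=\bH_t$ satisfying the hypotheses, and write $Z=W\cup\{x\}$ (with $|W|=r-1$), so $Z'=W\cup\{y\}$ for $y\in V\sm Z$. The plan is to express $\ww_\h(Z')=\Phi(\h-Z')$ as a sum of $\ww_{\h-Z}(\cdot)$ values, to which the hypothesis $\h-Z\models\mF$ can be applied. Each perfect matching of $\h-Z'$ splits uniquely into its edge $A^*$ through $x$---necessarily with $A^*\cap Z=\{x\}$ and $y\notin A^*$ (call such $A^*$ \emph{valid})---together with a perfect matching of $\h-Z-U$ for $U=(A^*\sm x)\cup y$. This yields the exact identity
\[
\ww_\h(Z')\;=\;\sum_{A^*\text{ valid}}\ww_{\h-Z}\bigl((A^*\sm x)\cup y\bigr).
\]
By the codegree bound \eqref{Rg2} and the $\gO(\log n)$ lower bound on $d_\h(x)$ (available from $\mR^1$ together with $\mL$), for every $y$ the number of valid $A^*$'s is $d_\h(x)-\sum_{z\in W}d_\h(x,z)-d_\h(x,y)=(1-o(1))\,d_\h(x)$.

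The central step is to show that for a.e.\ $y$, most of the terms $\ww_{\h-Z}((A^*\sm x)\cup y)$ equal $(1\pm o(1))\Phi(\h-Z)/D_{\h-Z}$. Let $\B\sub\binom{V\sm Z}{r}$ be the exceptional set from applying $\mF$ to $\h-Z$. A double-counting argument
\[
\sum_{y\in V\sm Z}\bigl|\{A^*\text{ valid}:(A^*\sm x)\cup y\in\B\}\bigr|
\;=\;\sum_{U\in\B}|\{y\in U:(U\sm y)\cup x\in\h\}|
\;\leq\;r|\B|
\]
(each $U\in\B$ contributes at most $r$ pairs) combined with Markov then gives, for all but a $o(1)$-fraction of $y$'s, a bad count negligible compared to $d_\h(x)$. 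Combining with the identity and the earlier count, for a.e.\ $y$,
\[
\ww_\h(Z')\geq(1-o(1))\,d_\h(x)\,\frac{\Phi(\h-Z)}{D_{\h-Z}}\;\sim\;\ww_\h(Z)\,\frac{d_\h(x)}{D_\h},
\]
where $D_{\h-Z}\sim D_\h$ follows from \eqref{Rg1} and the robustness \eqref{robust} of $\mR^0$. This is \eqref{wZxy}.

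The hard part is making the double-counting effective: since there are only $\sim\log n$ terms per $y$, we really need $|\B|$ to be substantially smaller than the $o(n^r)$ permitted by the bare wording of $\mF$---concretely, something like $|\B|=o(n^{r-1})$ is what Markov demands. This quantitative strengthening of $\mF$ must come from the proof of Lemma~\ref{LemmaF}, and cleanly integrating it with the $\mR^0$-based calibration is the main technical point.
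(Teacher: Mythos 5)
Your opening step is the same as the paper's: the identity $\ww_\h((Z\sm x)\cup y)=\sum\{\ww_{\h-Z}(S\cup y):S\in\binom{V\sm(Z\cup y)}{r-1},\,S\cup x\in\h\}$ is exactly \eqref{WHYz}, and the calibration $D_{\h-Z}\sim D_\h$ via \eqref{Rg1}/\eqref{robust} is fine. The gap is in the central step, and it is not the sort of gap that can be patched by ``a quantitative strengthening of $\mF$.'' Your double count gives $\sum_y(\mbox{bad count at }y)\le r|\B|$, and since each $y$ has only about $d_\h(x)=\Theta(\log n)$ terms, Markov over the $\approx n$ choices of $y$ requires $|\B|=o(n\log n)$ --- not the $o(n^{r-1})$ you state, and vastly less than the $o(n^r)$ that $\mF$ provides. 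Moreover no version of Lemma~\ref{LemmaF} can supply such a bound: under the $\mL$-conditioning there are typically many vertices $x'$ with $d(x')$ close to $\gd_{x'}\approx\eps\log n$ (rather than $\sim D$), and since $\sum_{A\ni x'}\ww_\g(A)=\Phi(\g)$, the $r$-sets through such an $x'$ have weights near $\Phi(\g)/(\eps D)$, so the exceptional set genuinely has size $\Omega(n^{r-1})$. Thus a deterministic implication from $\mR\wedge\{\h-Z\models\mF\}$ to \eqref{wZxy} is out of reach --- which is precisely why the lemma is stated with $\Rastar$ rather than as a pointwise implication like Lemma~\ref{RCEFB}.

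What the paper does instead, and what is missing from your proposal, is a probabilistic argument over the conditional distribution of the link $\bH_x$ given $\bG=\bH-x$. For a.e.\ $y$ the bad set $\tdI$ (of $A\ni x$ with $\ww_{\bH-Z}((A\sm x)\cup y)\not\sim\Phi'$, plus those meeting $Z\cup y$) has size $o(n^{r-1})$, i.e.\ a $o(1)$-fraction of $\K_x$; one then shows $|\bH_x\cap\tdI|=o(d_x)$ except with probability $n^{-\go(1)}$, using the large-deviation bound of Theorem~\ref{Cher'} together with $d_x=\gO(\eps\log n)$ (this is exactly where the $\gO(\log n)$ degree floor is crucial; see \eqref{role.of.eps}). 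The technical work is that $\bH_x$ is \emph{not} uniform given $\bG$, because of the conditioning defining $\mL$: the paper splits $\bH_x=\bN_T\cup(\bH_x\sm\bN_T)$ via the generation \eqref{Generation}, uses $\bH\in\mR^4(Z)$ to guarantee \eqref{GTPx} and hence the bound $u\le\go$ in \eqref{usum} on the residual degree deficiencies, and then removes the conditioning on $\bN_T$ with the van den Berg--Jonasson disjoint-occurrence inequality before applying Chernoff. None of this machinery (nor any substitute for it) appears in your argument, so as written the proof does not go through, and the repair you point to (a much smaller exceptional set in $\mF$) is both insufficient for your Markov step and unattainable.
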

\begin{lemma}\label{RCEFB}
If $\h$ satisfies $\mR^0\mF\mC$ then it satisfies $\mB$.
\end{lemma}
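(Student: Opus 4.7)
The plan is to argue by contradiction. Suppose $\h$ satisfies $\mR^0\mF\mC$ but not $\mB$, so some $A=\{x_1,\dots,x_r\}\in\h$ has $\ww_\h(A)>C\Phi(\h)/D_\h$ for a constant $C=C(\eps)=\gO(\eps^{-r})$ to be chosen large enough. The strategy is to use $\mC$ to propagate the ``heaviness'' of $A$ through a sequence of $r$ single-vertex swaps, producing a positive fraction of $r$-sets in $\K$ that are also significantly heavier than average---contradicting $\mF$.

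Set $Z_0=A$. At step $i\in[r]$ I will choose $y_i\in V\sm(A\cup\{y_1,\dots,y_{i-1}\})$ and set $Z_i=(Z_{i-1}\sm x_i)\cup y_i$. Applied to $Z_{i-1}$ and $x_i$, $\mC$ tells me that for all but an $o(n)$-sized ``bad'' set of $y_i$,
\begin{equation*}
\ww_\h(Z_i)\more \ww_\h(Z_{i-1})\,d_\h(x_i)/D_\h.
\end{equation*}
The minimum degree bound $\gd_\h=\gO(\eps D_\h)$ from $\mR^0$ forces $d_\h(x_i)/D_\h\geq\gO(\eps)$, so iterating over all $r$ steps, any sequence $(y_1,\dots,y_r)$ avoiding the bad sets yields
\begin{equation*}
\ww_\h(Z_r)\more\gO(\eps^r)\,\ww_\h(A)>2\Phi(\h)/D_\h
\end{equation*}
by the choice of $C$.

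A routine union bound over the $r$ (constant) steps, each contributing $o(n)$ bad $y_i$'s uniformly in the previously chosen $y_j$'s, gives at least $(1-o(1))n^r$ good ordered $r$-tuples of distinct vertices of $V\sm A$. Collapsing by the $r!$ orderings, this provides at least $(1-o(1))\C{n-r}{r}=\gO(|\K|)$ distinct sets $Z_r\in\K$ with $\ww_\h(Z_r)>2\Phi(\h)/D_\h$, which contradicts $\mF$ (that allows only an $o(|\K|)$-sized exceptional set).

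The only nontrivial point to verify is that the hypothesis $\ww_\h(Z_{i-1})>\Phi(\h)e^{-o(n)}$ of $\mC$ remains valid throughout the iteration. But since each step costs just a factor $\gO(\eps)$ and there are only $r$ (constant) steps, inductively $\ww_\h(Z_i)\geq \gO(\eps^i)\ww_\h(A)\geq \Phi(\h)/n^{O(1)}$, which comfortably satisfies the hypothesis. Beyond this check, the argument is mechanical; I do not anticipate any genuine obstacle, which is consistent with the lemma being stated as a quick consequence of the (much harder) $\mC$ and $\mF$.
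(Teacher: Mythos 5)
Your proof is correct and follows essentially the same route as the paper's (which is quoted from \cite{AsSh}, Lemma~6.4, rather than reproduced here): a single edge $A$ with $\ww_\h(A)\gg_{\eps}\Phi(\h)/D_\h$ would, after $r$ successive applications of $\mC$ together with the minimum-degree bound $\gd_\h=\gO(\eps D_\h)$ from $\mR^0$, make a $(1-o(1))$-fraction of all of $\K$ heavier than $2\Phi(\h)/D_\h$, contradicting $\mF$. The one point genuinely needing the check you supplied is that every iterate keeps weight at least $\gO(\eps^r)\,\Phi(\h)/D_\h\geq \Phi(\h)n^{-O(1)}$, which is comfortably above the $e^{-o(n)}$ threshold of \eqref{WZ}, so $\mC$ remains applicable throughout; the resulting constant $O(\eps^{-r})$ is weaker than the natural $\approx 1/\eps$ mentioned after the definition of $\mB$, but $\mB$ only asks for $O(1)$.
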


Lemmas~\ref{LemmaF}-\ref{RCEFB} 
immediately imply \eqref{Bi*} (in the form \eqref{B''}):
the first two give
\[
\{\bH_t\models \mA\mR\} \Rastar \{\bH_t\models \mF\mC\}
\]
and the story is then completed by Lemma~\ref{RCEFB}.

\mn

\mn
\emph{Remarks.}
The crucial contribution of Lemma~\ref{LemmaE}
is that it 
allows us to replace $\mA\mR$ by $\mA\mR\mE$ in
Lemma~\ref{LemmaF}; more precisely: 
Recall from \eqref{robust}
that $\h\models\mR^0$ implies $\h-Z\models\mR^0$ for every $Z$ $\in \K$, and note that
$\h\models\mA$ easily implies $ \h-Z\models\mA$ for any $Z$ as in \eqref{WZ}.
These observations, 
with Lemma~\ref{LemmaE}, say that 
in proving any of the assertions 
$\{\bH_t\models \mA\mR\} \Rastar \{\bG\models \mF\}$ of
Lemma~\ref{LemmaF}, we may 
we replace the l.h.s.\ by 
\beq{LemmaFeq''}
\{\bH_t\models \mA\mR\} \wedge \{\bG\models \mE\}.
\enq
Lemma~\ref{LemmaF} then embodies the idea that $\mE\ov{\mF}$ is unlikely 
for a random $\bG$ (here either $\bH_t$ or $\bH_t-Z$)
because
the distribution of the $\ww_{\bG}(A)$'s ($A\in \bG$) should reflect
that of the $\ww_{\bG}(Z)$'s ($Z \in \K$).
As said in \cite{AsSh},
we regard this natural point
as the heart of our argument.

The more important part of Lemma~\ref{LemmaF} 
is that involving $\bH_t-Z$, which provides
input for Lemma~\ref{LemmaC}.
Its other use, allowing us to assume $\mF$ in Lemma~\ref{RCEFB},
is convenient but less critical:  with
more effort one can show directly that $\{\bH_t\models\mR\mC\ov\mB\}$ is unlikely.

\mn

The nonprobabilistic Lemmas~\ref{LemmaE} and \ref{RCEFB} are 
Lemmas~6.1 and 6.4 of \cite{AsSh}
(the $\mR$ there being the present $\mR^0$),
and their proofs will not be repeated here.
(As mentioned in Section~\ref{Skeleton}, Lemma~\ref{RCEFB}
 is easy, but Lemma~\ref{LemmaE}
was one of the main points of \cite{AsSh}.)
Lemmas~\ref{LemmaC} and \ref{LemmaF} are proved (in this reverse order)
in Sections~\ref{PLC} and \ref{PLF1}-\ref{PLF3} respectively.

\section{Proof of Lemma~\ref{LemmaC}}\label{PLC}

Fix $x\in Z\in \K$
and let $\bH=\bH_t$, $Y=Z\sm x$ and $W=V\sm Z$.
We now use
$d_x$ for $d_{\bH}(x)$.

Recall (see \eqref{Generation}) that $\bH=\bH_T\cup \bJ$ with $\bH_T$ uniform from $\mL_{_T}$ and
$\bJ $ uniform from the ($T-t$)-subsets of $\K\sm \bH_T$.
This gives a joint distribution on $(\bG,\bG_T,\bN_T,\bH_x\sm \bN_T)$,
where $\bG=\bH-x$, $\bG_T=\bG\cap \bH_T$
and $\bN_T= \bH_T\sm\bG_T$ ($=(\bH_T)_x$).
Given $\bG=\g$
we choose $\bG_T,\bN_T,\bH_x\sm \bN_T$ (in this order),
and set $g=|\bG|$ and $g_{_T}=|\bG_T|$.

The law of $\bG_T$ will not concern us, but the next two observations will be helpful.
First, the law of $\bN_T$ depends only on $\bG_T$:
it is uniform measure on
\beq{NT}
\{\N\sub \K_x:|\N|=m_{_T}-g_{_T},~ \N \cup \bG_T \in \mL_{_T}\}.
\enq
Second, $\bH_x\sm \bN_T$ is chosen uniformly from the $(m-g-|\bN_T|)$-subsets of $\K_x\sm \bN_T$;
in particular its law depends only on $\bN_T$ and $g$.

\mn

We are really interested in the choice of $\bH_x$, for which
we may assume whatever consequences of $\bH\in \mR$ can be read off from $(\bG,\bG_T)$;
of these we will use just $D_{\bH-Z}\sim D_{\bH}$, $d_x=\gO(\eps\log n)$
(both given by \eqref{Rg1}) and $\bH\in \mR^4(Z)$.
We are then also entitled to assume
\beq{GTPx}
\bH_T\not\in \mD_x
\enq
(see \eqref{peex}),
since $\bH\in \mR^4(Z)$ says that the probability that 
$\bH_T\in \mD_Z$ ($\supseteq \mD_x$)---an event
decided by $\bG_T$---is $n^{-\gO(\go)}$
(recall $\go$ was introduced at \eqref{go}).
Armed with these assumptions, we continue.

\mn

Set $\Phi' = \Phi(\bH-Z)/D_{\bH}$ (a function of $\bG$)
and
recall that $\bH-Z\in \mF$
says (using $D_{\bH-Z}\sim D_{\bH}$)
\beq{WGZ*}
\mbox{$\ww_{\bH-Z}(U)\sim \Phi' 
~$
for a.e.\ $U\in \K[W]$.}
\enq
On the other hand, for any $y\in W$,
\beq{WHYz}
\ww_{\bH} (Y\cup y) = \sum\{\ww_{\bH-Z}(S\cup y):S\in \Cc{W\sm y}{r-1}, ~S\cup x\in \bH\},
\enq
and \eqref{WGZ*} implies
\beq{WGz}
\mbox{for a.e. $y\in W$, $~\ww_{\bH-Z}(S\cup y)\sim \Phi' ~$ for a.e.\ $~S\in \Cc{W\sm y}{r-1}$.}
\enq
It is thus enough to show that (under the assumptions in the preceding paragraph) the inequality in
\eqref{wZxy} is unlikely to fail for any $y$ as in \eqref{WGz}.

\bn

We are now choosing $\bH_x=\bN_T\cup (\bH_x\sm \bN_T)$.
Given $y$ as in \eqref{WGz}, let\footnote{A more formal version:
\eqref{WGz} says there is $\vs=o(1)$ so that for all but $\vs n$ $y$'s,
\[
|\{S\in \Cc{W\sm y}{r-1}:\ww_{\bH-Z}(S\cup y)\neq(1\pm \vs)\Phi'\}| < \vs n^{r-1}.
\]
Then ``$y$ as in \eqref{WGz}" is one of these, and 
the condition in $\mJ$ is
$\ww_{\bH-Z}(S\cup y)\neq(1\pm \vs)\Phi' $.}
\[
\mI=\{S\in \Cc{W\sm y}{r-1}:\ww_{_{\bH-Z}}(S\cup y)\not\sim \Phi'\}
\cup \{S\in\Cc{V\sm x}{r-1}: S\cap (Y\cup y)\neq \0\}
\]
and $\tilde{\mI}=\{S\cup x: S\in \mI\}$,
and notice that
\beq{Ismall}
|\tilde{\mI}|=|\mI|= o(n^{r-1}).
\enq
(The first set in the definition of $\mI$ consists of the exceptional $S$'s in \eqref{WGz},
so is of size $o(n^{r-1})$;
the second has size $\Theta(n^{r-2})$ and should be ignored.)

Noting that $d_x$ ($=d_{\bH}(x)$)
is determined by $\bG$, we observe that
the sum in \eqref{WHYz} is at least $(1-o(1))d_x\Phi'$---so the inequality in \eqref{wZxy}
holds---\emph{provided}
\[
|\bH_x\cap \tdI|=o(d_x).
\]
So it is enough to show, for some $\gz=o(1)$,
\beq{maxP}
\max\{\pr(|\bN_T\cap \tdI|> \gz d_x),\pr(|(\bH_x\sm \bN_T)\cap \tdI|> \gz d_x)\} = n^{-\go(1)};
\enq
as we will see, this is true whenever
\beq{vrgg}
\gz\gg \max\{(\log(n^{r-1}/|\mI|))^{-1}, \go/\log n\}.
\enq
(Recall we assume $\go\ra\infty$ \emph{slowly}, so the second bound is small.)

Note that for $\bH_x$ \emph{uniform} from $\C{\K_x}{d_x}$
and $\gz$ satisfying (just) the first bound in \eqref{vrgg},
$\pr(|\bH_x\cap \tdI|> \gz d_x)=n^{-\go(1)}$ 
is given by Theorem~\ref{Cher'}, using \eqref{Ismall} and $d_x=\gO(\eps\log n)$;
namely, since $\E |\bH_x\cap \tdI| \asymp n^{-(r-1)}|\mI|d_x$,
Theorem~\ref{Cher'} bounds the probability in question by
\beq{role.of.eps}
\exp[- \gz\log (\gz n^{r-1}/(e|\mI|))d_x]=\exp[-\go(\log n)].
\enq
(It is this point---more precisely, its analogues below---that collapses if 
Theorem~\ref{ThmZ} allows $\gd_x=o(\log n)$.)
So we are trying to show that the present distribution (see \eqref{NT})
doesn't behave too differently.

For any choice of $\bN_T$, $\bH_x\sm \bN_T$ is a uniform subset of some size less than $d_x$
from a universe of size $\Theta(n^{r-1})$, so, as above,
\[
\pr(|(\bH_x\sm \bN_T)\cap \tdI|> \gz d_x) = n^{-\go(1)}.
\]
While something similar is clearly true for 
$\pr(|\bN_T\cap \tdI|> \gz d_x)$,
I don't see how to say
it's just trivial.
A nice fly-with-a-sledgehammer argument runs as follows.

Given $\bG_T$, let
$u_z =\gd_z-d_{\bG_T}(z)$, 
$
J=\{z\in V\sm x:u_z>0\}
$
and
\beq{usum}
\mbox{$u=\sum_{z\in J}u_z \leq \go$}
\enq
(with the inequality from \eqref{GTPx}; again see \eqref{peex}).
Then $\bN_T$ is distributed as a uniform $(m_{_T}-g_{_T})$-subset, $\bN$, of $\K_x$
conditioned on
\[
\mS:=\{d_{\bN}(z)\geq u_z ~\forall z\in J\}.
\]
Set $\mN=\{|\bN\cap \tdI| > \gz d_x\}$ and $\mN'=\{|\bN\cap \tdI| > \gz d_x-u\}$,
and notice that if $\mS \mN$ holds, then $\mS$ and $\mN'$ 
\emph{occur disjointly} at $\bN$; that is, there are disjoint $\A , \B\sub\bN$
such that $\bN\supseteq \A $ implies $\mS$ and
$\bN\supseteq \B $ implies $\mN'$
(see \cite{BK} or e.g.\ \cite{Grimmett}).
But a beautiful result of van den Berg and Jonasson \cite{BJ} bounds the probability
of this disjoint occurrence by $\pr(\mS)\pr(\mN')$, yielding
\[
(\pr(|\bN_T\cap \tdI|> \gz d_x) =)\,\,\,
\pr(\mN|\mS)\leq \pr(\mN')= n^{-\go(1)},
\]
with the $n^{-\go(1)}$ again given by Theorem~\ref{Cher'} (now 
using $u\leq \go$, the second bound in \eqref{vrgg} and, again, $d_x=\gO(\eps\log n)$ to say the 
subtracted $u$ in $\mN'$ is irrelevant).

\section{Proof of Lemma~\ref{LemmaF}:  setting up}\label{PLF1}

In this and the next two sections, $\bH$ is $\bH_t$, 
we use $m$ for $m_t$ and $D$ for $D_m$,
and $\bG$ is either $\bH$ or $\bH-Z$
(with $Z$ as in \eqref{WZ}), with law in either case denoted $\vp$;
thus
\beq{vp}
\vp(\g):=\vp(\bG=\g)\propto \left\{\begin{array}{ll}
\gb(\g)&\mbox{if $\bG=\bH$,}\\
\sum_{\h- Z=\g}\gb(\h)&\mbox{if $\bG=\bH-Z$},
\end{array}\right.
\enq
where $\g$ ranges over possibilities for $\bG$ and $\h$ over $\K^t$
(or just over possibilities for $\bH$, since the rest don't contribute to \eqref{vp}).
Note that the sum in the second part of \eqref{vp} 
is the common value of $\vp_Z(\h)$ for $\h\in \K^t$
with $\h-Z=\g$; see \eqref{vpZh}.

We will refer to the two parts of \eqref{vp} as Case 1 and Case 2 (respectively).
Note that Case 2 includes Case 1 if we allow $Z=\0$; 
we have included the first part of  \eqref{vp} to emphasize the distinction, 
but from this point (until the end of Section~\ref{PLF3}) 
\emph{$Z$ is either empty or as in \eqref{WZ},}
and we set $V'=V\sm Z$ and $\K'=\K-Z$.  
In what follows we will usually be thinking of the more demanding Case 2;
but the arguments also make sense in Case 1,
where they often simplify, sometimes drastically.
A few comments on 
Case 1 appear in square brackets.

\mn

As observed at \eqref{LemmaFeq''}, Lemma~\ref{LemmaE} says that 
Lemma~\ref{LemmaF} is equivalent to the assertion that for each $\bG$ in 
\eqref{vp},
\beq{LemmaFeq'}
\pr(\bH\models \mA\mR, \bG\models \mE\ov{\mF}) = n^{-\go(1)}.
\enq
Note we may assume here that
\beq{sillyp}
m = |\K|-\gO(|\K|);
\enq
for $\bH\models \mR^0$ implies that $|\bG|\sim m$ for each $\bG$ in \eqref{vp},
so if \eqref{sillyp} fails then for each such $\bG$, $\mE$ and $\mF$ are equivalent 
and \eqref{LemmaFeq'} is vacuous.
(This rather silly point will be needed for \eqref{Ubd''}.)

\mn

It will be convenient to further reformulate as follows.
For any $\g$ set
\[
\eta(\g) =\inf\{\eta:|\{U\in \K: \ww_\g(U)\neq (1\pm \eta)\Phi(\g)/D_\g\}|< \eta |\K|\}.
\]
Then $\{\g\models \mF\} = \{\eta(\g)=o(1)\}$ and \eqref{LemmaFeq'} is equivalent
to\footnote{With $\mG=\{\bH\models \mA\mR, \bG\models \mE\} $ and $\mH(\nu) =\{\eta(\bG)> \nu\}$,
\eqref{LemmaFeq'} says
\[
\mbox{there is $\vs=o(1)$ such that $ \pr(\mG\wedge\mH(\vs))=n^{-\go(1)}$,}
\]
while \eqref{L7.2'}  implies
\[
\forall k,  ~ \pr(\mG\wedge\mH(1/k))<n^{-k}~~\mbox{for $n\geq n_k$};
\]
and we get the former from the latter by taking $\vs(n)  =(\max\{k:n_k\leq n\})^{-1}$.}
\beq{L7.2'}
\mbox{\emph{for any fixed $\theta>0$, 
$~\pr(\bH\mmodels \mA\mR, \bG\mmodels\mE,\eta(\bG)>2\theta) =n^{-\go(1)}$.}}
\enq
(The $2\theta$ will be convenient below.)
So for the rest of this section we fix $\theta>0$ and aim for \eqref{L7.2'}.

Set
\beq{Phi'}
\Phi'=\Phi(\bG)/D_{\bG}.
\enq
Notice that $\{\bG\mmodels \mE\}\wedge \{\eta(\bG)>2\theta\}$
implies
\begin{itemize}

\item[$\mQ$:]
\emph{$\ww_{\bG}(A)\sim \Phi'$ for a.e.\ $A\in  \bG$, but
$\ww_{\bG}(\uu )\neq (1\pm 2\theta) \Phi'$ for
at least a ($2\theta$)-fraction of the $\uu $'s in $ \K'\sm\bG$.}

\end{itemize}

\nin
So it is enough to show
\beq{Qbd}
\pr(\bH\mmodels\mA\mR, \bG\mmodels\mQ) =n^{-\go(1)}.
\enq

For the proof of this we work with
an auxiliary random set $\bT$
chosen uniformly from $\C{\bG}{\tau}$, where $\tau$, which will be specified later
(see the paragraph containing \eqref{gznu}-\eqref{param4}), will at least satisfy
\beq{tau}
\go\log n \ll \tau \ll \log^2n.
\enq
(As explained following \eqref{go}, $\go$ is really chosen 
\emph{after} the parameters of \eqref{gznu}-\eqref{param4}.)
We set $\bF=\bG\sm \bT$ and
\[
\gz= e^{-\tau/D},
\]
and will be interested in a property of the pair $(\bG,\bT)$ (or $(\bF,\bT)$),
\emph{viz.}
\begin{itemize}
\item[$\mV$:]
\emph{$\ww_{\bF}(A)\sim \gz \Phi'$ for a.e.\ $A\in   \bT$, but
$\ww_{\bF}(\uu )\neq (1\pm \theta) \gz \Phi'$ for
at least a $\theta$-fraction of the $\uu $'s in $ \K'\sm\bG$,}
\end{itemize}
Note
$\gz \ww_{\bG}(\uu )$ is a natural approximate value for $\ww_{\bF}(\uu )$, since
each p.m.\ of $\bG-U$ survives in $\bF$ with probability
roughly (actually, asymptotically) $(1-\tau/m)^{n/r}\sim \gz$; \emph{cf.} \eqref{1-vs}.

\mn

We will exploit the familiar leverage derived from the interplay
of two natural ways of generating $(\bG,\bT)$:
\begin{itemize}
\item[(A)]
choose $\bG$ and then $\bT$
(as above);

\item[(B)]
choose $\bF $ and then $\bT$ (determining $\bG=\bF\cup\bT$).

\end{itemize}

\nin
(In Case 2, analysis for (A) will involve choosing
$\bH$ rather than just $\bG$.)

\mn

Notice that, given $\bF$, the law of $\bT$ is given by
\beq{reweightT}
\pr(\bT=\T)\propto
\vp(\bF\cup \T),
\enq
where $\T$ ranges over $\tau$-subsets of $\K'\sm \bF$
(and $\vp$ is as in \eqref{vp}).

We will not need to know much about the law of $\bF$, but will want to restrict attention
to reasonably well-behaved possibilities. 
Thus we will define a property $\mN$ specifying a few desirable
features of $\bF$,
and,
now writing $\mA\mR$ for $\{\bH\mmodels \mA\mR\}$, 
$\mQ$ for $\{\bG\mmodels \mQ\}$, $\mN$ for $\{\bF\mmodels \mN\}$,
and $\mV$ for $\{(\bG,\bT)\mmodels \mV\}$,  show
\beq{U|Q}
\pr(\mV \mN|\mA\mR\mQ) = 1-o(1)
\enq
and
\beq{Ubd}
\pr(\mV\mN) = n^{-\go(1)}
\enq
These give \eqref{Qbd}, since
\[
\pr(\mA\mR\mQ) =\pr(\mA\mR\mQ\mV\mN)/\pr(\mV\mN|\mA\mR\mQ)\leq
\pr(\mV\mN)/\pr(\mV\mN|\mA\mR\mQ).
\]
(So \eqref{U|Q} is more than is really needed here.)

\mn

Noting that we have specified $m$ and $Z$, we take $\mN$
to be the property comprising \eqref{N1}-\eqref{N4} below.
The first of these takes a little preparation.
Set,
for $g\leq m$ (in what follows, $g$ will be $|\bG|$):
\beq{mI}
\mI(g) = \C{\K_Z}{m-g},
\enq
with $\K_Z:=\cup_{x\in Z}\K_x$, and
\beq{mI*}
\mI^*(g)=\{\I\in \mI(g):d_\I(x)\geq \gd_x~\forall x\in Z\};
\enq
for $\g'\in \C{\K'}{g}$,
\[
\barvp(\g') =|\mI(g)|^{-1}\vp(\g')
\]
(with $\vp$ as in \eqref{vp});
and for $\f\sub \K'$ with $|\f| =g-\tau$,
\beq{mH}
\mH(\f)
=\{\g'\in \Cc{\K'}{g}:\g'\supseteq \f\} 
\enq
and
\beq{mH*}
\mH^*(\f) =\{\g'\in \mH(\f):d_{\g'}(x)\geq \gd_x~\forall x\in V'\}.
\enq
Like $\vp$ itself, $\barvp$ should recall $\mR^2$:  
$\barvp(\g')$ is $\barvp_Z(\h)$ for any $\h\in \K^t$ with $\h-Z=\g'$.
[In Case 1, $g=m$, $\mI^*(g) = \mI(g)=\{\0\}$, and $\barvp(\g)=\vp(\g) =\gb(\g)$.]
Note that $\mH(\f)$ includes possibilities for $\bG$ given $\bF=\f$, but 
typically also some (irrelevant) \emph{im}possibilities; 
e.g.\ in Case 1 anything in $\mH(\f)\sm\mH^*(\f)$.

In two places below it will be convenient to first dispose of the easy case of very small $m$, 
allowing us to restrict attention to (say)
\beq{m.not.small}
m > (1+n^{-3\gd})m_{_T}.
\enq

With $g=|\f|+\tau$,
the first requirement for $\f\mmodels \mN$ is 
\beq{N1}
\barvp(\g') \more\left\{
\begin{array}{ll} 
|\mI^*(g)|/|\mI(g)| \,\,\forall \g'\in \mH^*(\f)&\mbox{if $m$ violates \eqref{m.not.small},}
\\
n^{-(2r+1)}\gb(\K) \,\,\forall \g'\in \mH(\f) &\mbox{if $m$ satisfies \eqref{m.not.small}.}
\end{array}\right.
\enq
A \emph{little} perspective:  for \eqref{Ubd} we will use viewpoint (B), bounding
$\pr(\mV|\bF=\f)$ for $\f\in \mN$, and will want to say that the law of $\bG$ under this
conditioning is not too awful; but that law is governed by $\vp$ (again, see \eqref{vp}),
which might at least \emph{suggest} relevance of \eqref{N1}.

The other, more easily stated requirements (for $\f\models \mN$) are 
\beq{N2}
\mbox{if $m>2rm_{_T}$ then $d_{\f}(x)> 1.5\eps D$ $\forall x\in V'$};
\enq
\beq{N3}
\mbox{with $\ga=m_{_T}/m$,
$~\pr(\f_\ga~ \mbox{anemic}) < \exp[-(1-o(1))n^{2\gd}]$}
\enq
(see \eqref{anemic} for ``anemic''); and (with $\go$ as in \eqref{go} and \eqref{tau})
\beq{N4}
\mbox{$\sum_{y\in V'}(\gd_y-d_{\f}(y))^+\leq \go$.}
\enq

\mn

Of course for
\eqref{U|Q}
it is enough to show (as above using $\mN$ for $\bF\mmodels \mN$ and so on)
\beq{U|Q1}
\mbox{for any $\h\in \K^t\cap \mR$,
$~~\pr(\mN|\bH=\h) = 1-o(1)$}
\enq
and (our main point)
\beq{U|Q2}
\pr(\mV |\mA\mR\mQ) > 1-o(1).
\enq

We prove \eqref{U|Q1} and \eqref{Ubd} in Section~\ref{PLF2}
and \eqref{U|Q2} in Section~\ref{PLF3}, organizing in this way because
the proofs of \eqref{U|Q1} and \eqref{Ubd}
are slightly similar (mainly in their use of Lemma~\ref{ZZ'lemma}) and unrelated to the proof of
\eqref{U|Q2}.

\section{Proofs of \eqref{U|Q1} and \eqref{Ubd}}
\label{PLF2}

\begin{proof}[Proof of \eqref{U|Q1}]
We will show that \eqref{N3} follows (deterministically) from $\bH\in \mR$, while the other parts of $\mN$
are implied (again, deterministically) by the combination of $\h\in\mR$,
\beq{T1}
\mbox{$\bT$ is a matching}
\enq
and
\beq{T2}
\mbox{$\bT$ covers no $x$ for which
$d_{\bG}(x)\leq 1.5\eps D$.}
\enq
(Recall we are using $\bH$, $m$ and $D$ for $\bH_t$, $m_t$ and $D_m$.)
This gives \eqref{U|Q1} since (under $\bH\in \mR$)
the upper bound on $\tau$ in \eqref{tau} implies that each of
\eqref{T1}, \eqref{T2} holds with probability $1-o(1)$:
for \eqref{T1} this is a weak consequence of 
$\gD_\h=O(D_\h)$ (see \eqref{Rg1});
and
for \eqref{T2} it holds because
$\h\in \mR^3$ (with Observation~\ref{mR1Cor})
and the codegree condition
\eqref{Rg2} bound
the number of $x$'s in \eqref{T2} by $2rn^{2\gd}$.

\mn

Turning to the deterministic assertions preceding \eqref{T1}, 
we first note that \eqref{N2}
is immediate from $\bH\in \mR$ (specifically, \eqref{m3mT} and \eqref{Rg2})
and \eqref{T1}, while \eqref{N4} follows easily from $\h\in \mR^4$, \eqref{T1} and \eqref{T2}
(the first trivially implies \eqref{N4} with $\bG$ in place of $\f$, and
the others say the passage to $\bF$ doesn't affect this).

\mn

To get \eqref{N3} from $\bH\in \mR$, 
notice that 
\[
\exp[-(1-o(1))n^{2\gd}] ~> ~\pr(\bH_\ga ~\mbox{anemic})~\geq ~
\pr(\bF_\ga ~\mbox{anemic})\cdot (1-\ga)^{|\bH\sm\bF|}
\]
(the first inequality is $\bH\in \mR^3$ and the second is trivial).
Thus for \eqref{N3} it is enough to show 
\beq{PHFvs}
(1-\ga)^{|\bH\sm\bF|}= \exp[-o(n^{2\gd})]
\enq
---which isn't close:  we have (using \eqref{Rg1})
\beq{mm0}
|\bH\sm \bF|=|\bH\sm \bG|+\tau = O(D) +\tau =O(m/n)+\tau;
\enq
so the l.h.s. of \eqref{PHFvs} is (crudely)
\[
(1-\ga)^{O(m/n)+\tau}
=\left\{\begin{array}{ll}
\exp[-O(\frac{m_{_T}}{m}\{\frac{m}{n}+\tau\})]&\mbox{if $\ga < 1/2$ (say),}\\
~\\
\exp[-O(\frac{m}{n}+\tau)\log n]&\mbox{otherwise}
\end{array}\right.
\]
(the latter since $1-\ga>1/m$),
and \eqref{PHFvs} follows easily, 
using the upper bound on $\tau $ in \eqref{tau}
and $m_{_T}< n\log n$ (which when $\ga\geq 1/2$ also implies $m = O(n\log n)$).

\mn

Finally, we turn to the two cases of \eqref{N1}.
For $m$ violating \eqref{m.not.small}, we use Lemma~\ref{Psmallm}.
Here the assumption \eqref{xdHs} holds even for $\bF$---as follows from
$\bH\in \mR$ (specifically, \eqref{mR1cor} and 
\eqref{Rg2}), \eqref{T1} and \eqref{T2}---so
also for any $\g'\in \mH(\bF)$.  Thus the lemma gives
$\gb(\g'\cup \I)=1-o(1)$ whenever $\g'\cup \I\in \mL$
(with $\g'\in \mH(\bF)$ and $\I\in \mI(g)$), which in particular is true whenever 
$\g'\in \mH^*(\bF)$ and $\I\in \mI^*(g)$; so we have \eqref{N1} in this case.

\mn

For $m$ \emph{satisfying} \eqref{m.not.small},
recall from \eqref{mR1} 
[or, in Case 1, its specialization \eqref{betaH}]
that $\bH\in \mR^2$ implies the inequality in
\eqref{N1} for $\g'=\bG$ (since $\barvp(\bG)$ is the same as
$\barvp_Z(\bH)$);
so it is enough to show
\beq{phiG'phiG}
\vp(\g')\more \vp(\bG) \,\,\,\forall \g'\in \mH(\bF).
\enq
It's also easy to see that $\barvp(\bG)\more n^{-(2r+1)}\gb(\K)$ 
(as in \eqref{N1}) implies (say)
\beq{phiG}
\vp(\bG) \sim
\sum\{\gb(\bG\cup \I):\I\in \mI(g), \gb(\bG\cup\I)> n^{-(2r+2)}\gb(\K)\}.
\enq
But we claim that for any $\I$ as in \eqref{phiG}
and $\g'\in \mH(\bF)$,
\beq{bG'J}
\gb(\g'\cup \I)\more \gb(\bG\cup \I),
\enq
which (in view of \eqref{phiG}) gives \eqref{phiG'phiG}.

\mn

For \eqref{bG'J} we apply Lemma~\ref{ZZ'lemma} with ${\JJJ}=\bG\cup \I$ and ${\JJJ}'=\g'\cup \I$
(so $\eee \supseteq\bF\cup\I$, $\gs\leq\tau$ and $W\sub V'$).
Note that here the $\kappa$ of the lemma is zero, since
all $A_i$'s lie in $\bT$, so by 
\eqref{T2}
are not dangerous for ${\JJJ}$.
So the lemma's conclusion is \eqref{bG'J}, and we just need to check
its hypotheses (assuming ${\JJJ}\in \mL$, without which the r.h.s.\ of 
\eqref{bG'J} is zero):

First,
\eqref{mmT} holds because we assume \eqref{m.not.small},
and \eqref{Aimg} is given by \eqref{T1}.
Second,
${\JJJ}'\in \mL$ follows from ${\JJJ}\in \mL$,
using $\J\sm \J' \sub \bT$ with \eqref{T1} and \eqref{T2}.
Third,
\eqref{T1} implies
$d_{\eee}(x)\geq d_{\bG}(x)-1 $ for each $x\in V'$ ($\supseteq W$), which gives
\eqref{nodang} since
(using \eqref{m3mT} and \eqref{Rg2}, and noting $|\bH|=|\J|$)
\[
m>2rm_{_T} ~\Ra ~ [d_{\bH}(x)> 2\eps D_\J ~\forall x\in V]
~\Ra ~ [d_{\bG}(x)> (2\eps -o(1))D_\J ~\forall x\in V'].
\]
Last, \eqref{betaD} holds because
$\gb({\JJJ}) > \exp[-(1-o(1))n^\gd]$
(by Corollary~\ref{betacor}, since
$\I$ is as in \eqref{phiG}) and, by \eqref{N3}, 
$\pr(\bX_{\JJJ}~ \mbox{anemic})$ ($\leq \pr(\f_\ga~ \mbox{anemic})$) $ <\exp[-(1-o(1))n^{2\gd}]$.
\end{proof}

\begin{proof}[Proof of \eqref{Ubd}]
\emph{Implied constants in this argument do not depend on $\eps$ or $\theta$.}
We actually show
\beq{Ubd'}
\mbox{for any $\f\in \mN, ~~\pr(\mV|\bF=\f) = e^{-\gO(\theta\tau)}$}
\enq
(which is $n^{-\go(1)}$ by \eqref{tau}).
Here we use viewpoint (B).  The (natural) idea is roughly:  $\f$ determines the
weights $\ww_{\f}(\uu )$ (for all $U\in \K'$, though here we are only interested in $U\in \K'\sm \f$),
and $\mV$ then requires that $\bT$ be (pathologically) drawn almost entirely from $U$'s with weights
close to $\gz \Phi'$, though this group excludes a constant fraction of $\K'\sm \f$.

In fact \eqref{Ubd} would be more or less routine if we were choosing $\T$ \emph{uniformly}
rather than as in \eqref{reweightT}.  The crude 
comparison in Lemma~\ref{prmu} below will allow us to move between these two regimes.

\mn

Fix $\f\in \mN$ 
and  set $g=|\f| +\tau$, 
$\mI=\mI(g),\mI^*=\mI^*(g),\mH=\mH(\f)$ and $\mH^*=\mH^*(\f)$ 
(see \eqref{mI}-\eqref{mH*}).
We now regard the $\pr$ of \eqref{reweightT} as a probability measure on $\mH$---thus
\beq{Pg'}
\pr(\g')\propto \vp(\g') ~~\mbox{for $\g'\in \mH$}
\enq
---and use $\mu$ for uniform measure on $\mH$.
(It's perhaps worth noting---though this won't matter---that, unlike $\pr$, 
$\mu$ can assign positive probability to $\T$'s for which $\f\cup \T$ is not 
a possible value of $\bG$.)

\begin{lemma}\label{prmu}
With notation as above, if $\mX\sub\mH$ and
$\mu(\mX)=e^{-\gO(\theta\tau)}$, then
\[
\pr(\mX)=e^{-\gO(\theta\tau)}.
\]
\end{lemma}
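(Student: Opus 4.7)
\emph{Proof plan for Lemma~\ref{prmu}.}
The plan is to control the Radon--Nikodym derivative of $\pr$ with respect to $\mu$. Since $\pr(\g')\propto \vp(\g')$ and $\mu$ is uniform on $\mH$,
\[
\frac{\pr(\mX)}{\mu(\mX)}\;=\;\frac{\mathrm{avg}_{\g'\in\mX}\vp(\g')}{\mathrm{avg}_{\g'\in\mH}\vp(\g')}\;\le\;\frac{\max_{\g'\in\mH}\vp(\g')}{\mathrm{avg}_{\g'\in\mH}\vp(\g')},
\]
and it is enough to bound this last ratio by $n^{o(\go)}$: since $\tau\gg\go\log n$ and $\go\to\infty$, the factor $n^{o(\go)}=e^{o(\theta\tau)}$ is absorbed into the $\gO$ in the conclusion by shrinking the implied constant.

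The upper bound $\vp(\g')\le|\mI(g)|$ is immediate from $\gb\le1$, so the work is the lower bound on the average. Writing
\[
\sum_{\g'\in\mH}\vp(\g')\;=\;\sum_{\h\in\mH'}\gb(\h),\qquad \mH'\;=\;\Bigl\{\f\cup\T\cup\I:\T\in\tbinom{\K'\setminus\f}{\tau},\ \I\in\mI\Bigr\},
\]
the plan is to show that a bulk of $\h\in\mH'$ satisfies $\gb(\h)\ge n^{-O(1)}$, which would yield $\mathrm{avg}_\mH\vp\ge n^{-O(1)}|\mI|$, and hence the desired $n^{O(1)}=n^{o(\go)}$ bound on $\max/\mathrm{avg}$. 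For a given $\h$ with controlled minimum degree, one gets $\gb(\h)=\Omega(1)$ by combining Lemma~\ref{comp.lemma.1}(a) (to replace $\bU_\h$ by $\bZ_\h$), Harris's inequality, and Theorem~\ref{T2.1}: each factor $\pr(d_{\bZ_\h}(x)\ge\gd_x)$ is $1-n^{-\epp}$ whenever the expected $\bZ_\h$-degree exceeds $\gd_x$ with a buffer of constant relative size. For $x\in V'$, $\f\in\mN$ gives $d_\f(x)>1.5\eps D$ via~\eqref{N2} (in the regime $m>2rm_{_T}$; otherwise Lemma~\ref{Psmallm} and the first clause of~\eqref{N1} take over, using~\eqref{N3}), and adding $\T$ and $\I$ only increases degrees. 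For $x\in Z$, the random $\I$ produces $d_\I(x)$ of the right order with high probability by the hypergeometric version of Theorem~\ref{T2.1}. The exceptional $O(n^{2\gd})$ ``dangerous'' vertices guaranteed by~\eqref{N3} (via Observation~\ref{mR1Cor}) contribute only a polynomially small factor to the Harris product and do not spoil the $n^{-O(1)}$ bound. Once the pointwise estimate $\gb(\h)\ge n^{-O(1)}$ holds on a positive fraction of $\mH'$, averaging completes the estimate on $\mathrm{avg}\,\vp$.

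The main obstacle---and the reason the author calls the lemma ``crude''---is that the naive lower bound from~\eqref{N1} alone, namely $\bar\vp(\g')\succ n^{-(2r+1)}\gb(\K)$, is catastrophically weak in view of Corollary~\ref{betacor} ($\gb(\K)\sim\exp[-n^\gd]$): it yields a ratio $\max/\mathrm{avg}\le n^{O(1)}\exp[n^\gd]$, which is much larger than $e^{\theta\tau}$ for $\tau\ll\log^2 n$. The refinement above---that \emph{typical} $\h\in\mH'$ have $\gb(\h)\ge n^{-O(1)}$, rather than the worst-case $e^{-n^\gd}$---is what makes the lemma go through, and requires the joint degree contributions of $\f$, $\T$, and $\I$ together with careful handling of the case split (Case~1 vs.\ Case~2, and $m$ close to $m_{_T}$ vs.\ $m$ large) using~\eqref{N1}-\eqref{N4} in concert.
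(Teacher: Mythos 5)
Your reduction to bounding $\max_{\g'\in\mH}\vp(\g')/\mathrm{avg}_{\g'\in\mH}\vp(\g')$ founders on a false pointwise estimate: except when $m$ is very close to $m_{_T}$ (the regime of Lemma~\ref{Psmallm}), a typical $\h$ of size $m$ does \emph{not} satisfy $\gb(\h)\geq n^{-O(1)}$. The quantity $\gb(\h)$ is a probability that \emph{all} $n$ vertices retain degree at least $\gd_x$ after passing to a uniform $m_{_T}$-subset; for $m$ in the bulk of the range (say $m=\Theta(|\K|)$, or even $m=2m_{_T}$) each vertex fails with probability about $n^{-(1-\gd)}$ (as in \eqref{deg.dev.bd}), so the Harris-type product over all $n$ vertices is $\exp[-\Theta(n^{\gd})]$ -- compare Corollary~\ref{betacor}, where $\gb(\K)\sim\exp[-n^{\gd}]$. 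Your per-vertex factors $1-n^{-\epp}$ are correct, but their product over $n$ vertices is exponentially small in a power of $n$, not polynomially small. Consequently $\mathrm{avg}_{\mH}\vp$ is roughly $\exp[-n^{\gO(1)}]\,|\mI|$ rather than $n^{-O(1)}|\mI|$, and pairing it with the trivial bound $\max\vp\leq|\mI|$ gives a ratio of order $\exp[n^{\gO(1)}]$, which is exactly the catastrophe you correctly diagnose for the naive use of \eqref{N1} -- your proposed refinement does not escape it. The structural problem is that both the maximum and the average of $\vp$ are exponentially small and must be compared \emph{to each other}; no absolute lower bound on the average combined with the trivial upper bound $\gb\le 1$ can work in the middle range of $m$.

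The paper's proof avoids absolute estimates on $\gb$ entirely. For $m$ violating \eqref{m.not.small} it does argue absolutely (first clause of \eqref{N1} plus $\vp\leq|\mI^*|$, with the loss $n^{\go}$ from $\mu(\mH^*)>n^{-\go}$, see \eqref{munC}, absorbed because $\tau\gg\go\log n$). But for $m$ satisfying \eqref{m.not.small} it fixes $\g^0\in\mH^*$ minimizing $\vp$ and proves the \emph{relative} bound \eqref{vp.ratio}, $\vp(\g')/\vp(\g^0)\less n^{O(\eps\gl(\g'))}$, via the comparison machinery of Lemma~\ref{ZZ'lemma} applied to $\gb(\g'\cup\I)$ versus $\gb(\g^0\cup\I)$ for the relevant $\I$; here $\gl(\g')$ counts edges of $\g'\sm\f$ meeting vertices of small $\f$-degree, \eqref{N2}--\eqref{N3} control the hypotheses, and \eqref{mu.kappa.h} gives $\mu(\gl\geq b)=n^{-\gO(b)}$ with a universal constant. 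The conclusion then comes from stratifying $\mX$ by $\gl$ and playing $n^{-\gO(b)}$ against $n^{O(\eps b)}$ -- this is where the smallness of $\eps$ (flagged after Lemma~\ref{ZZ'lemma} and used at \eqref{Pg0}) is essential, a mechanism absent from your sketch. So this is not a repairable shortcut but a missing idea: you need a way to compare $\vp$-values of different $\g'\in\mH$ directly, and some accounting (here, the dangerous-edge count $\gl$) for the rare $\g'$ whose $\vp$ genuinely exceeds the typical value by polynomial factors.
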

\begin{proof}
Notice first that
\eqref{N4} and $\tau\geq \go$ (see \eqref{tau})
imply
\beq{munC}
\mu(\mH^*)> n^{-\go}.
\enq

For $m$ violating \eqref{m.not.small}
we now finish easily: combining \eqref{N1} and the trivial 
$\vp(\g')\leq |\mI^*|$ ($\forall \g'\in \mH$) with \eqref{munC} 
(and \eqref{Pg'}) gives $\pr(\mX) \less n^{\go}\mu(\mX)$ for any $\mX\sub\mH$;
and this gives the lemma since $\tau\gg \go\log n$ (again see \eqref{tau}).
So we assume from now on that $m$ satisfies \eqref{m.not.small}.

\mn

For $\g'\in \mH$, let
$\gl(\g')$ be the number of edges of $\g'\sm \f$ containing vertices $x$ with
\beq{xdF}
d_\f(x) < 1.5 \eps D.
\enq
By \eqref{N3} and
Observation~\ref{mR1Cor},
the number of such
vertices is less than $2n^{2\gd}$, implying that, for any $b$,
\beq{mu.kappa.h}
\mu(\g':\gl(\g')\geq b)=n^{-\gO(b)}
\enq
(Because (e.g.): if there \emph{are} $x$'s as in \eqref{xdF}, then 
\eqref{N2} bounds the fraction of members of $\K'\sm\f$ containing such $x$'s by
$O(n^{-1+2\gd})$, and the upper bound in \eqref{tau} then gives \eqref{mu.kappa.h}.)

\mn

Fix $\g^0\in \mH^*$ with $\vp(\g^0)$ minimum.  We will show that for
any $\g'\in \mH$,
\beq{vp.ratio}
\vp(\g')/\vp(\g^0) \less
n^{O(\eps\gl(\g'))}.
\enq
(Of course we can replace ``$\less$'' by ``$<$'' if $\gl(\g')\neq 0$.)

\mn

Before proving \eqref{vp.ratio} we show 
it gives Lemma~\ref{prmu} (for $m$ satisfying \eqref{m.not.small}).
Since (by \eqref{munC})
\[
\sum\{\vp(\g'):\g'\in \mH\} > n^{-\go}|\mH|\vp(\g^0),
\]
we have, for any $\g'\in \mH$ (using \eqref{vp.ratio}, and with $\g''$ running over $\mH$),
\[
\pr(\g')~=~\frac{\vp(\g')}{\sum\vp(\g'')} ~<~ \frac{n^{\go}}{|\mH|}\frac{\vp(\g')}{\vp(\g^0)}
~\less~\frac{n^{\go}n^{O(\eps\gl(\g'))}}{|\mH|}
=n^{\go}\mu(\g')n^{O(\eps\gl(\g'))}.
\]
Combining this with \eqref{mu.kappa.h}, $\tau\gg \go\log n$ 
and $\mu(\mX)=e^{-\gO(\theta\tau)}$ gives the desired bound:
with $\gl_0 =\theta\tau/\log n$,
\beq{Pg0}
\pr(\mX)\less
\mbox{$n^{\go}\left[\mu(\mX)n^{O(\eps\gl_0)} 
+ \sum_{b>\gl_0}\mu(\g':\gl(\g')=b)n^{O(\eps b)}\right]$}
=
e^{-\gO(\theta\tau)}.\qedhere
\enq

\begin{proof}[Proof of \eqref{vp.ratio}]
Set $\mI^0=\{\I\in \mI: \gb(\g'\cup \I) > n^{-(2r+2)}\gb(\K)\}$
($\sub \mI^*$)
and notice that it is enough to show
\beq{betaG0}
\gb(\g'\cup \I) \less n^{O(\eps\gl(\g'))}\gb(\g^0\cup \I) \,\,\,\forall ~\I\in \mI^0;
\enq
for if this is true then, recalling that $\f\in \mN$ implies
$\barvp(\g^0) \more n^{-(2r+1)}\gb(\K)$ (see \eqref{N1}), we have
\begin{eqnarray*}
\barvp(\g')&\leq& 
\mbox{$|\mI|^{-1}\sum\{\gb(\g'\cup \I):\I\in \mI^0\} + (1+o(1))n^{-1}\barvp(\g^0)$}\\
&\less& n^{O(\eps\gl(\g'))}\vp(\g^0)/|\mI| +n^{-1}\barvp(\g^0)
~\less~
n^{O(\eps\gl(\g'))}\barvp(\g^0).
\end{eqnarray*}
[In Case 1, \eqref{N1} says $\mI^0=\{\0\}$ ($=\mI$), and \eqref{betaG0} \emph{is}
\eqref{vp.ratio}.]

For \eqref{betaG0} we will again use Lemma~\ref{ZZ'lemma}, now with
${\JJJ}=\g'\cup \I$ and ${\JJJ}'=\g^0\cup \I$
(and $\eee=(\g'\cap\g^0)\cup \I\supseteq \f$),
so should check hypotheses:
first, 
\eqref{mmT} holds since we assume \eqref{m.not.small},
and \eqref{Aimg} is given by \eqref{T1} (since $\J\sm \eee\sub \T$);
second, assuming (as we may) that ${\JJJ}\in \mL$, we have
${\JJJ}'\in \mL$, since $\g^0\in \mH^*$ 
and
${\JJJ},{\JJJ}'$ agree on edges meeting $Z$;
third, \eqref{nodang} holds since (for $m>2rm_{_T}$) \eqref{N2} gives
$
d_\eee(x)\geq d_\f(x) > 1.5\eps D$
($=1.5\eps D_{\JJJ})
$
for $x\in V' ~ (\supseteq W)$;
last, \eqref{betaD} follows from $\I\in \mI^0$ (with Corollary~\ref{betacor})
and \eqref{N3} (with
$\pr(\bX_{\J}~ \mbox{anemic}) \leq \pr(\bF_\ga~ \mbox{anemic})$).

So the lemma applies and we just need to check that $\kappa$ 
(the number of edges of $\J\sm\J'$ containing vertices dangerous for $\J$)
is at most $\gl(\g')$;
but this is true because $\JJJ\sm \JJJ'\sub \g'\sm \f$ and any $x$ that is dangerous for $\JJJ$
satisfies \eqref{xdF} (since 
$d_\f(x)\leq d_{\JJJ}(x) $).
\end{proof}
This completes the proof of Lemma~\ref{prmu}.\end{proof}

We return to \eqref{Ubd'}, which by Lemma~\ref{prmu}
will follow from its ``$\mu$-version,'' \emph{viz.}
\beq{Ubd''}
\mbox{\emph{for any $\f\in \mN, ~~\mu(\mV|\bF=\f) = e^{-\gO(\theta\tau)}.$}}
\enq
A small complication here is that $\f$ doesn't determine the ``target" $\gz \Phi'$
appearing in $\mV$.
Among several ways of dealing with this, the following seems nicest.

Given $\f$, 
let $\uu _1,\ldots$ be an ordering of $\K'\sm \f$ with
$\ww_{\f}(\uu _1)\leq \ww_{\f}(\uu _2)\leq \cdots$,
and let $\YYY$ and $\ZZZ$ be (resp.) the first and last $\theta|\K'\sm \f|/3$ of the $\uu _i$'s.
Then, \emph{whatever} $\Phi'$ turns out to be, the second part of $\mV$ requires that
at least one of $\YYY$, $\ZZZ$ be contained in
\[
\W:=\{\uu \in \K'\sm \f: \ww_{\f}(\uu ) \neq (1\pm \theta)\gz \Phi'\}
\]
(or
$|\W|< |\YYY|+|\ZZZ|<2(\theta |\K'\sm \bG|+\tau)/3<\theta |\K'\sm \bG|$,
the last inequality since \eqref{sillyp} and \eqref{tau} imply $\tau\ll |\K'\sm \bG|$).
But then, since (now regarding $\mu$ as the law of $\bT$)
\[
\E_\mu|\bT\cap \YYY|=\E_\mu|\bT\cap \ZZZ| =\theta\tau/3
\]
and $\theta$ is fixed, Theorem~\ref{T2.1} bounds
the probability that the first part of $\mV$ 
holds by (say)
\[
\mu(\max\{|\bT\cap \YYY|,|\bT\cap \ZZZ|\}< \theta\tau/4) =e^{-\gO(\theta\tau)}.
\]
\end{proof}

\section{Proof of \eqref{U|Q2}}
\label{PLF3}

(We continue to use $\bH=\bH_t$, $m=m_t$ and $D=D_m$.)
We now need to pay some attention to parameters.
We first observe that if $\h\mmodels\mA\mR^0$ and $\g=\h-Z$
(with $Z$ either empty or as in \eqref{WZ}), then there is
$\gc=o(1)$ (depending on the $o(n)$ in $\mA$ and, in $\mR^0$,
the (explicit or implicit)
$o(\cdot)$'s in \eqref{Rg0} and \eqref{Rg2},
and the implied constants in \eqref{Rg1}),
such that for each $U\in \K'$ with (say)
\beq{Phi*bd}
(\ww_\g(U)=) \,\,\,\Phi(\g-U)> \Phi(\g)n^{-r},
\enq
$\g^*:=\g-U$ and $\Phi^*:=\Phi(\g^*)$
satisfy
\beq{heavysum}
\sum\{\ww_{\g^*}(A):A\in \g^*, \ww_{\g^*}(A)\neq (1\pm \gc)\Phi^*/D\}< \gc n\Phi^*.
\enq
To see this, recall from the remarks following 
Lemma~\ref{RCEFB}
that each relevant $\g^*$ satisfies $\mA\mR^0$,
so also $\mE$ by Lemma~\ref{LemmaE}.
But then $\g^*$ contains $(1-o(1))|\g^*|\sim nD/r$ edges $A$ with 
$\ww_{\g^*}(A)\sim \Phi^*/D_{\g^*}\sim \Phi^*/D$,
with (both) asymptotics following easily from $\g\mmodels\mR^0$
(see \eqref{Rg2});
so such edges account for all but a $o(1)$-fraction of the total weight $\Phi^*(n-r)/r\sim \Phi^*n/r$.
This gives \eqref{heavysum} for a suitable $\gc=o(1)$.

\mn

We now choose $\tau =\nu \min\{\log n,D\}$ ($\sim\nu \log n$)---noting that then
\beq{gznu}
\gz ~~(=e^{-\tau/D}) ~ \geq e^{-\nu}
\enq
---together with $M$ and $\eta$,
satisfying
\beq{param1}
\log n\gg\nu\gg \go
\enq
(which is \eqref{tau});
\beq{param2}
e^{-\nu}\gg \gc;
\enq
\beq{param3}
\tau\gg M\left\{\begin{array}{ll}
\gg \gc \tau,\\
> 1+\gc;
\end{array}\right.
\enq
and
\beq{param4}
e^{-\nu}\gg \eta\gg \sqrt{\tau M}/\log n.
\enq
Note this is possible:
we may choose $\nu\ra\infty$ as slowly as we like (which in particular gives
\eqref{param1} and \eqref{param2}); we then want to choose $M$ as in \eqref{param3} satisfying (to
leave room for $\eta$)
$
e^{-\nu}\gg \sqrt{\tau M}/\log n;
$
and this is possible if $e^{-\nu}\gg \max\{\nu\sqrt{\gc},\sqrt{\nu/\log n}\}$,
which is true for a slow enough $\nu$.

\mn

For the proof of \eqref{U|Q2} we use viewpoint (A) (choose $\bH$---so also $\bG$---and then $\bT$).
We assume we have chosen $\bH=\h$, with $\h\mmodels\mA\mR$ and $\g:=\h-Z\mmodels\mQ$; 
so $\pr$ now
refers just to the choice of $\bT$, and \eqref{U|Q2} will follow from
\beq{G,T}
\pr((\g,\bT)\mmodels \mV) = 1-o(1).
\enq
It will be enough to show that for $\uu \in \K'$ as in \eqref{Phi*bd}
(i.e.\
$\ww_\g(\uu )> \Phi(\g)n^{-r}$),
\begin{eqnarray}
\pr(\ww_{\bF}(\uu ) \sim \gz\ww_\g(\uu )) = 1-o(1)&\mbox{if $U\in \K'\sm \g$,}
\label{WGTZ1}\\
\pr(\ww_{\bF}(U ) \sim \gz\ww_\g(U )|U\in \bT) = 1-o(1)&\mbox{if $U\in \g$.}
\label{WGTZ2}
\end{eqnarray}

\mn

Before proving this we show that it does give \eqref{G,T}.
If $\g$ satisfies $\mQ$ then for a suitable $\vs=o(1)$,
\beq{GO}
|\{A\in \g:\ww_\g(A)\neq (1\pm \vs)\Phi'\}| \ll |\g|
\enq
(where $\Phi'=\Phi(\g)/D_\g$; see \eqref{Phi'}).
Thus, with $\g^0$ the set in \eqref{GO}, we have
$
\E |\T\cap\g^0|=\tau |\g^0|/|\g|\ll \tau,
$
so
\[
\mbox{$|\bT\cap \g^0|\ll\tau~$  w.h.p.}
\]
(by Theorem~\ref{T2.1} or just Markov's Inequality).
But for the first part of $\mV$ to fail we must have either
$|\T\cap\g^0|=\gO(\tau)$,
which we have just said occurs with probability $o(1)$, or
\[   
|\{A\in \T\sm\g^0: \ww_{\f}(A)\not\sim\gz\ww_\g(A)\}|=\gO(\tau),
\]   
which has probability $o(1)$ by \eqref{WGTZ2} (and Markov).

Similarly, failure of the second part of $\mV$ implies
\beq{WZLarge}
\ww_{\f}(U) = (1\pm \theta)\gz\ww_\g(U)
\not\sim\gz\ww_\g(U)
\enq
for at least $\theta|\K'\sm \g|$ of those
$\uu $'s in
the second part of $\mQ$ that satisfy
\beq{WZlarge}
\ww_\g(\uu )> (1-\theta)\gz \Phi' > n^{-o(1)}\Phi(\g)/D
\enq
(since those failing \eqref{WZlarge} \emph{cannot} satisfy \eqref{WZLarge};
for the second bound in \eqref{WZlarge} see \eqref{gznu} and \eqref{param1}).
But since the bound in \eqref{WZlarge} is larger than the one in \eqref{Phi*bd},
\eqref{WGTZ1} implies that the probability that \eqref{WZLarge}
holds for such a set of $U$'s is $o(1)$.

\mn

Finally, we prove \eqref{WGTZ1}; the proof of \eqref{WGTZ2} is almost
literally the same and is omitted.
(Note the probability in \eqref{WGTZ2} is just
$\pr(\ww_{\g\sm \bT_0}(U ) \sim \gz\ww_\g(U ))$, with $\bT_0$ uniform from
$\C{\g\sm\{U\}}{\tau-1}$.)

\begin{proof}[Proof of \eqref{WGTZ1}.]

We now fix $U $ as in \eqref{Phi*bd} (and recall $\g^*=\g-\uu $ and
$\Phi^*=\Phi(\g^*)$).
(We will, pedantically, keep track of the microscopic numerical differences
between Cases 1 and 2---in Case 2 the number of vertices is $n-r$
and $|\g|$ is not exactly $m$---but stress they are wholly irrelevant.)

\mn

Say $A\in\g$ is \emph{heavy} if $A\in\g^*$ and $\ww_{\g^*}(A)> M\Phi^*/D$, and note that by
\eqref{heavysum} (and $M> 1+\gc$; see \eqref{param3}),
\beq{fewheavies}
\mbox{the number of heavy edges in $\g$ is less than $\gc nD/M=\gc mr/M$,}
\enq
implying
\beq{Theavy}
\pr(\mbox{$\bT$ contains a heavy edge}) \less \gc \tau r/M =o(1)
\enq
(see \eqref{param3}; we need ``$\less$'' because we only have $|\g|\sim m$).  
So it is enough to show \eqref{WGTZ1} conditioned on
\beq{noheavy}
\{\mbox{$\bT$ contains no heavy edges}\}.
\enq
We will instead show a slight variant, replacing
$\bT$ by $\bT'=\{A_1\dots A_\tau\}$, with the $A_i$'s chosen uniformly and \emph{independently}
from the non-heavy edges of $\g$; thus:
\beq{WGTZ'}
\pr(\ww_{\g\sm\bT'}(\uu ) \sim \gz \ww_\g(\uu )) =1-o(1).
\enq
Of course this suffices:  we may couple $\bT$ (conditioned on \eqref{noheavy})
and $\bT'$ so they agree whenever the
edges of $\bT'$ are distinct, which occurs w.h.p.\ (more precisely, with probability greater than
$1-\tau^2/m$), and the probability in \eqref{WGTZ1} is then at least
the probability in \eqref{WGTZ'} minus $\pr(\bT'\neq\bT)$.

\mn

For the proof of \eqref{WGTZ'}, let
\[
X= X(A_1\dots A_\tau)  =\Phi(\g^*\sm\{A_1\dots A_\tau\})= \ww_{\g\sm \bT'}(U).
\]
Since $\eta\ll \gz $ (see \eqref{gznu} and
\eqref{param4}), \eqref{WGTZ'} will follow from
(recall $\ww_\g(U)=\Phi^*$)
\beq{EPhiX}
\E X\sim \gz \Phi^*
\enq
and
\beq{prXE}
\pr(|X-\E X|>\eta \Phi^*)=o(1).
\enq
\begin{proof}[Proof of \eqref{EPhiX}]
Let $M_i$ run through the p.m.s of $\g^*$ and let $x_i$ be the number of
heavy edges in $M_i$.
Then with $m'$ the number of non-heavy edges in $\g$, we have
(with, irrelevantly, $\iota=j$ in Case $\textrm{j}$ for $\textrm{j}=1,2$),
\[
\mbox{$\E X = \sum_i (1-(n/r-\iota-x_i)/m')^\tau$}
\]
and, by \eqref{heavysum},
\beq{sumxi'}
\sum x_i =\sum\{\ww_{\g^*}(A):\mbox{$A\in \g^*$, $A$ heavy}\}<\gc n\Phi^*.
\enq
These imply, with $\varrho =(n/r-\iota)/m'$,
\begin{eqnarray}
(1-\varrho)^\tau \Phi^*&\leq & \mbox{$\E X
~<~ \sum_i e^{-(\varrho-x_i/m') \tau}$}\label{EXsum}\\
&< &\left[e^{-\varrho\tau}+ \gc rn/(n-\iota r)\right]\Phi^*.\nonumber
\end{eqnarray}
Here the last inequality follows from \eqref{sumxi'} and convexity of the
exponential function, which imply that the sum in \eqref{EXsum} is at most what it would be
with $\frac{\gc n\Phi^*}{n/r-\iota} =\frac{\gc rn\Phi^* }{n-\iota r}$ of the $x_i$'s equal to 
$n/r-\iota$
and the rest (the number of which we just bound by $\Phi^*$) equal to zero.

In view of \eqref{EXsum}, \eqref{EPhiX} will follow from
\beq{1-vs}
(1-\varrho)^\tau\sim e^{-\varrho\tau}\sim e^{-\tau/D} ~~(=\gz)
\enq
(and $\gc \ll \gz$, which is given by \eqref{gznu} and \eqref{param2}).
For the two parts of \eqref{1-vs} we need (resp.) $\varrho^2\ll 1/\tau$ and
$|\varrho-1/D|\ll 1/\tau$.  The first of these follows from \eqref{fewheavies}
(which gives $m'\sim m$, though here $m'=\gO(m)$ would suffice)
and \eqref{tau}.
For the second, now using \eqref{fewheavies} more precisely
 (and recalling $D=mr/n$), we have
\[
\left| \frac{n/r-\iota}{m'}-\frac{n/r}{m}\right|
\leq \frac{\iota}{m'}+\frac{n}{r}~\frac{m-m'}{mm'} < \frac{\iota}{m'}+\frac{n}{r}\frac{\gc r}{Mm'}\ll \frac{1}{\tau},
\]
with the last inequality a (weak) consequence of \eqref{param3}.
\end{proof}

\begin{proof}[Proof of \eqref{prXE}]

We
consider the (Doob) martingale
\beq{Doob}
X_i=X_i(A_1\dots A_i)=\E[X|A_1\dots A_i]
\,\,\,(i=0\dots \tau),
\enq
with difference sequence $Z_i=X_i-X_{i-1}$
($i\in [\tau]$)
and $Z =\sum Z_i$ ($=X-\E X$).
For the next little bit we use $\E_S$ for expectation
with respect to
$(A_i:i\in S)$.

Given $A_1\dots A_{i-1}$ we may express
\beq{ZW}
Z_i=\E W-W,
\enq
where $\E$ refers to $A$ chosen uniformly from the non-heavy edges of $\g$ and
\beq{W(A)}
W(A) ~=~\E_{[i+1,\tau]}\Phi(\g^*\sm\{A_1\dots A_{i-1},A_{i+1}\dots A_\tau\})
\,\,\,\,\,\,\,\,\,\,\,\,\,\,\,\,\,\,\,\,\,\,\,\,\,\,\,\,\,\,\,\,
\,\,\,\,\,\,\,\,
\enq
\[
\,\,\,\,\,\,\,\,\,\,\,\,\,\,\,\,\,\,\,\,\,\,\,\,\,\,\,\,\,\,\,\,\,\,\,\,\,\,\,\,
-\E_{[i+1,\tau]}\Phi(\g^*\sm\{A_1\dots A_{i-1},A,A_{i+1}\dots A_\tau\}).
\]

\mn
For \eqref{ZW} just notice that
\[
X_i(A_1\dots A_{i-1},A) =\E_{[i+1,\tau]}\Phi(\g^*\sm\{A_1\dots A_{i-1},A,A_{i+1}\dots A_\tau\}),
\]
while
\[
X_{i-1}(A_1\dots A_{i-1}) =\E_{[i,\tau]}\Phi(\g^*\sm\{A_1\dots A_{i-1},A_i,A_{i+1}\dots A_\tau\}).
\]
(The first term on the r.h.s.\ of \eqref{W(A)} is chosen to give \eqref{WAWA} and,
not depending on $A$, doesn't affect \eqref{ZW}.)

We also have
\beq{WAWA}
0\leq W(A) \leq ~\ww_{\g^*}(A),
\enq
since these bounds hold even if we remove the $\E$'s in \eqref{W(A)}.
Thus $W$ satisfies the conditions in Proposition~\ref{EeZ} with $b=M\Phi^*/D$ and 
$a\sim\Phi^*/D$ (the latter since
$|\g|^{-1}\sum_{A\in \g}\ww_{\g^*}(A) =|\g|^{-1}\Phi^*(n/r-\iota) 
\less\Phi^*/D$---note $\ww_{\g^*}(A):=0$
if $A\in \g\sm\g^*$---and averaging instead only over non-heavy edges can only decrease this).
So for any
\beq{gzbds}
\vt \in [0,(2b)^{-1}],
\enq
we may apply Lemma~\ref{Azish} to each of $Z$, $-Z$, using Proposition~\ref{EeZ}
(with \eqref{ZW}) to bound the factors in
\eqref{EeeZ}, yielding
\[
\max\{\E e^{\vt Z},\E e^{-\vt Z}\}\leq e^{\tau \vt^2 ab} 
= \exp[(1+o(1))\tau\vt^2 M(\Phi^*/D)^2]
\]
and, for any $\gl>0$,
\beq{maxprZ}
\max\{\pr(Z>\gl),\pr(Z<-\gl)\} < \exp[(1+o(1))\tau\vt^2 M(\Phi^*/D)^2-\vt \gl].
\enq
For \eqref{prXE} we use \eqref{maxprZ} with $\gl =\eta \Phi^*$ and
\beq{gzvals}
\vt =\min\left\{\frac{\eta\Phi^*}{2\tau M(\Phi^*/D)^2}, \frac{D}{2M\Phi^*}\right\}
=\frac{D}{2M\Phi^*}\min \left\{ \frac{\eta D}{\tau},1\right\}
\enq
(the first value in ``min" essentially
minimizes the r.h.s.\ of \eqref{maxprZ}
and the second enforces \eqref{gzbds}), and should show that the exponent in \eqref{maxprZ}
is then $-\go(1)$.

Suppose first that $\eta D\leq \tau$, so $\vt$ takes the first value(s) in \eqref{gzvals}.
Then the negative of the exponent in \eqref{maxprZ} is asymptotically
(using \eqref{param4} and $D\more \log n$)
\[
\frac{(\eta\Phi^*)^2}{4\tau M(\Phi^*/D)^2} = \frac{\eta^2D^2}{4\tau M}
=\go( 1).
\]
If instead $\eta D>\tau$,
then $\vt = D/(2M\Phi^*)$ and the exponent in \eqref{maxprZ} is
\[
(1+o(1))\frac{D^2}{(2M\Phi^*)^2}\tau M\left(\frac{\Phi^*}{D}\right)^2 
-\frac{D}{2M\Phi^*}\eta \Phi^*
=(1+o(1)) \frac{\tau}{4M}-\frac{\eta D}{2M}
= -\go(1),
\]
where we used $\eta D>\tau$ and, from \eqref{param3}, $\tau\gg M$.
\end{proof}
This completes the proof of \eqref{WGTZ1}.\end{proof}

\section{Foundation}\label{Foundation}

Finally, we return to the assertions 
listed at the end of Section~\ref{SecR} 
that will complete the proof of \eqref{Ri}. 
As suggested earlier,
this is currently a much longer story
than it seems ought to be necessary, but we do the best we can,
as usual aiming for ``simplicity'' rather than strongest statements.

Most of of this involves behavior at $m_{_T}$.  
This ``foundation" is covered in Sections~\ref{Configs}-\ref{Degrees}, 
with the final points needed for \eqref{Ri} mostly in Section~\ref{AppR}.

\subsection{Configurations and simplicity}\label{Configs}

\mn
With $V=[n]$, the \emph{degree sequence} of $\h\sub\K$ is $\ud(\h)=(d_\h(1)\dots d_\h(n))$.
In what follows $\ud$ is always in
\beq{sss}
\mbox{$\sss
= \{(d_1\dots d_n): \sum d_i =m r\} $}
\enq
($m$ for now unspecified),
and we set 
\beq{Kud}
\K(\ud)= \{\h\sub \K: \ud(\h)=\ud\} .
\enq

We will work with the hypergraph version of the ``configuration model"
of Bollob\'as \cite{Boll79} (see \cite{Wormald} for a good discussion
of the model and antecedents).
Let $T$ be a set of size $mr$ 
and $T_1\cup\cdots \cup T_n$ a partition of
$T$ (into \emph{pre-verts}) with $|T_j|=d_j ~\forall j$
(so $mr=\sum d_i$).
A \emph{configuration} is an (\emph{unordered}) partition of $T$ into \emph{pre-edges} of size $r$; it
is \emph{simple} if
\beq{nopreedge}
\mbox{no pre-edge meets any pre-vert more than once}
\enq
and
\[
\mbox{no two pre-edges meet exactly the same pre-verts.}
\]

The projection $\pi:T\ra V$ given by $\pi(T_j) =\{j\}$ $\forall j$
maps each \emph{simple} configuration to some $\h\in\K(\ud)$, and for any such $\h$ we have
\[
\mbox{$|\pi^{-1}(\h)| =\prod d_j!$.}
\]
Thus
\beq{Kd}
\mbox{$|\K(\ud)| =\Psi\gc(\ud) (\prod d_j!)^{-1},$}
\enq
where $\Psi=\Psi(m,r)$
is the number of configurations (which of course depends only on $m$ and $r$)
and $\gc(\ud)$ is the probability that a uniformly
chosen configuration is simple.
The (easily calculated) $\Psi$
is irrelevant here, since 
we are only interested in ratios, but
we \emph{will} need some crude information on the $\gc(\ud)$'s.
(Much better estimates can be gotten by
adapting the switching methods of
McKay and Wormald; see \cite{McKay,MW} or, again, \cite{Wormald}.)

\mn

Since it costs nothing to do so, and perhaps helps clarify what's relevant, we state
our basic result here in some generality,
assuming the setup in the paragraph containing \eqref{nopreedge},
with $r\geq 3$ fixed and $D=\max d_i$.

\begin{lemma}\label{Lgamma}
If 
\beq{Dmn}
m^{2r-3}> n^{r-1}D^{2r-1},
\enq
then $\gc(\ud)=e^{-O(D)}$.
\end{lemma}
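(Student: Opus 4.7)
The plan is to write $\gc(\ud)=\pr[X_1=X_2=0]$, where in a uniformly random configuration $X_1$ counts pre-edges containing two pre-verts from a common bucket $T_j$ and $X_2$ counts unordered pairs of distinct pre-edges whose images under $\pi$ coincide; compute $\E X_1=O(D)$ and $\E X_2=O(1)$ under the hypothesis; and then deduce $\gc(\ud)\ge e^{-O(D)}$ via a Poisson-type lower bound.

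For $\E X_1$: by symmetry the probability that two fixed pre-verts lie in a common pre-edge of a uniform configuration is $(r-1)/(mr-1)$, so
\[
\E X_1\ \le\ \sum_j \binom{d_j}{2}\cdot\frac{r-1}{mr-1}\ \le\ \frac{D(r-1)}{2}\cdot\frac{mr}{mr-1}\ =\ O(D),
\]
using $\sum_j d_j(d_j-1)\le D\sum_j d_j=Dmr$. For $\E X_2$: the probability that two specified disjoint $r$-subsets of $T$ both appear as pre-edges is asymptotic to $(r!)^2/(m^{2r-2}r^{2r})$, and for fixed $S\in\binom{[n]}{r}$ the number of unordered pairs of disjoint $r$-subsets with projection $S$ equals $\tfrac12\prod_{j\in S}d_j(d_j-1)$. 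Summing over $S$ and applying the Newton-type bound $\sum_S\prod_{j\in S}d_j^2\le(\sum_jd_j^2)^r/r!\le(mrD)^r/r!$ (via $\sum_jd_j^2\le Dmr$) yields $\E X_2\lesssim D^r/m^{r-2}$. The hypothesis $m^{2r-3}>n^{r-1}D^{2r-1}$, combined with $nD\ge mr$ (so $n^{r-1}\ge(mr/D)^{r-1}$), forces $m^{r-2}>r^{r-1}D^r$; hence $\E X_2=O(1)$.

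To pass from these moment estimates to $\pr[X_1=X_2=0]\ge e^{-O(D)}$, I would write $X_1+X_2$ as a sum of indicators of elementary ``bad'' substructures (for $X_1$: intra-bucket pre-vert pairs sharing a pre-edge; for $X_2$: ordered pairs of disjoint $r$-tuples of pre-verts forming pre-edges with equal projection) and invoke a Suen-type inequality. The key step is to bound the dependency sum $\Delta=\sum_{E\ne F,\,E\sim F}\pr[E\cap F]$ over pairs of bad events sharing pre-verts; the dominant contribution is from $X_1$-$X_1$ pairs (three pre-verts of a common bucket in one pre-edge, or two disjoint intra-bucket pairs in a common pre-edge), and a direct count gives $\Delta=O(D)$. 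Suen's inequality then yields $\gc(\ud)\ge\exp[-\mu-O(\Delta)]=e^{-O(D)}$.

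The main obstacle is the dependency-sum bound in the last step, since in the configuration model bad events are coupled through the global pre-vert/pre-edge matching and Suen's inequality is not immediately available for the natural product space. One circumvents this by embedding the configuration into a product model of random bijections and checking dependencies there, or by bypassing Suen via the McKay--Wormald switching method, which in fact yields the finer $\gc(\ud)\sim\exp[-\lambda_1-\lambda_2]$ but is overkill for the crude bound needed here.
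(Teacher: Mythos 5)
Your moment computations are fine ($\E X_1=O(D)$ and, using \eqref{Dmn} together with $nD\ge mr$, $\E X_2=O(1)$), but they are the easy part of the lemma and, by themselves, give nothing: Markov only bounds $\pr(X_1+X_2\ge 1)$ by $O(D)$, which is vacuous, so the entire content lies in converting these expectations into the lower bound $\pr(X_1=X_2=0)\ge e^{-O(D)}$. That conversion is exactly the step you leave open. Suen/Janson-type inequalities (and, for lower bounds of this kind, Harris/FKG) are formulated for events built from independent trials or for dependency graphs in which non-adjacent events are genuinely independent; in the configuration model the bad events live on a uniform perfect matching (equivalently a uniform bijection), where \emph{no} two pattern events are independent and the space carries no product structure. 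Your two suggested fixes do not close this: ``embedding into a product model of random bijections'' is not available (a uniform bijection is itself not a product measure, so the same obstacle reappears), and ``use McKay--Wormald switchings instead'' is a different proof that you do not carry out.

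The paper's proof is aimed precisely at this missing step. It works with a uniform bijection $\bgs:E\to T$ and first proves a negative-correlation statement (Proposition~\ref{ESprop}): for patterns $X\not\sim X_1,\dots,X_t$ (no shared line), $\pr(A_X\mid\wedge_i\bar A_{X_i})\le\pr(A_X)$, established by an explicit injection on matchings of $K_{N,N}$ in the spirit of the Erd\H{o}s--Spencer Latin-transversal argument. This supplies the hypothesis \eqref{LLLhyp} of the lopsided Lov\'asz Local Lemma for the two families of bad patterns (your $X_1$- and $X_2$-type structures, the ``loops'' and ``repeats''), and the LLL with $x\asymp N^{-2}$, $y\asymp N^{-2r}$ then yields $\pr(\mbox{simple})\ge(1-x)^{O(m^2D)}(1-y)^{O(m^3n^{r-1}D^{2r-1})}=e^{-O(D)}$, the condition \eqref{Dmn} entering exactly to make the repeat contribution $O(1)$. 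So if you want to complete your route, you must either prove a correlation input of the strength of Proposition~\ref{ESprop} (after which you are essentially reproducing the paper's argument), or actually execute a switching argument; as written, the proposal has a genuine gap at its central step.
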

\nin
(We will use this with $D=n^{o(1)}$---or, really, with $D$
growing at most a little faster than $\log n$---so \eqref{Dmn} won't be an issue.)

\begin{proof}[Proof of Lemma~\ref{Lgamma}]
Here we think of configurations
in terms of maps, as follows.
Let $E=E_1\cup\cdots \cup E_m$, with the $E_i$'s disjoint $r$-sets.
A bijection $\gs:E\ra T$ gives the configuration $\{\gs(E_1)\dots \gs(E_{m_{_T}})\}$,
and we say $\gs$ is simple if the configuration is.
Thus for a uniform $\bgs$, $\gc(\ud)=\pr(\mbox{$\bgs$ is simple})$ and Lemma~\ref{Lgamma}
becomes
\beq{13.1'}
\mbox{\emph{under the assumptions of Lemma~\ref{Lgamma},
$~\pr(\bgs ~\text{is simple})=e^{-O(D)}$.}}
\enq

The proof of this
uses the Lov\'asz Local Lemma
\cite{Erdos-Lovasz} in the following form (see \cite{AS}, Lemma 5.1.1
and the remark beginning near the bottom of p.\ 71).

\begin{lemma}\label{LLL}
Let $A_1\dots A_s$ be events in a probability space,
$\gG$ a graph on $[s]$ (thought of as a set of edges),
and $x_1\dots x_s\in [0,1)$.
Suppose that for any $i\in [s]$ and $S\sub [s]\sm (\{i\}\cup \{j:ij\in \gG\})$,
\beq{LLLhyp}
\mbox{$
\pr(A_i|\wedge_{j\in S}\bar{A}_j)\leq x_i\prod_{ij\in \gG}(1-x_j).
$}
\enq
Then
\[   
\mbox{$
\pr(\wedge_i\bar{A}_i)\geq \prod(1-x_i).
$}
\]    
\end{lemma}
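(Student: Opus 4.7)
The plan is to prove the standard strengthening that for every $i\in [s]$ and every $S\sub [s]\sm\{i\}$,
\[
\pr(A_i|\wedge_{j\in S}\bar A_j)\leq x_i,
\]
and then to get $\pr(\wedge_i\bar A_i)\geq \prod(1-x_i)$ from the chain rule
\[
\pr(\wedge_{i=1}^s\bar A_i)=\prod_{i=1}^s\pr(\bar A_i|\bar A_1\wedge\cdots\wedge\bar A_{i-1})
\geq\prod_{i=1}^s(1-x_i),
\]
where each factor uses the strengthened inequality with $S=\{1,\dots,i-1\}$.

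The strengthened inequality is proved by induction on $|S|$. The base case $|S|=0$ reduces to $\pr(A_i)\leq x_i\prod_{ij\in\gG}(1-x_j)\leq x_i$, which is just the hypothesis \eqref{LLLhyp} with $S=\0$ (plus $1-x_j\leq 1$). For the inductive step, partition $S=S_1\cup S_2$, with
\[
S_1=\{j\in S:ij\in\gG\},\qquad S_2=S\sm S_1,
\]
and write the ratio identity
\[
\pr(A_i|\wedge_{j\in S}\bar A_j)
=\frac{\pr\bigl(A_i\wedge(\wedge_{j\in S_1}\bar A_j)\,\bigl|\,\wedge_{j\in S_2}\bar A_j\bigr)}
{\pr\bigl(\wedge_{j\in S_1}\bar A_j\,\bigl|\,\wedge_{j\in S_2}\bar A_j\bigr)}.
\]
The numerator is at most $\pr(A_i|\wedge_{j\in S_2}\bar A_j)$, which, since $S_2$ avoids $\{i\}\cup\{j:ij\in\gG\}$, is bounded by $x_i\prod_{ij\in\gG}(1-x_j)$ directly from the hypothesis \eqref{LLLhyp}. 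The denominator is expanded by another chain-rule step: enumerating $S_1=\{j_1,\dots,j_\ell\}$,
\[
\pr\bigl(\wedge_{k}\bar A_{j_k}\,\bigl|\,\wedge_{j\in S_2}\bar A_j\bigr)
=\prod_{k=1}^\ell\pr\bigl(\bar A_{j_k}\,\bigl|\,\bar A_{j_1}\wedge\cdots\wedge\bar A_{j_{k-1}}\wedge\textstyle\bigwedge_{j\in S_2}\bar A_j\bigr),
\]
and each factor is at least $1-x_{j_k}$ by the inductive hypothesis applied to the (strictly smaller) conditioning set. Thus the denominator is at least $\prod_{j\in S_1}(1-x_j)$, cancellation with the numerator gives $x_i$, and the induction closes.

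The only real subtlety, which is essentially cosmetic, is to ensure the inductive hypothesis is legitimately invoked on each of the $\ell$ denominator factors: each uses the strengthened inequality at index $j_k$ with a conditioning set of size $|S_2|+(k-1)<|S|$, so induction on $|S|$ goes through. No structural obstacle arises; the entire content of the Local Lemma is packed into this bookkeeping, and once the strengthened inequality is in hand the main conclusion is immediate.
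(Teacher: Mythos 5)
Your proof is correct: it is the standard inductive argument for the lopsided Local Lemma (the strengthened claim $\pr(A_i|\wedge_{j\in S}\bar A_j)\leq x_i$ for all $S\sub[s]\sm\{i\}$, proved by splitting $S$ into neighbors and non-neighbors of $i$), and the bookkeeping in your denominator step is right since each conditioning set has size at most $|S|-1$. The paper does not prove Lemma~\ref{LLL} at all, merely citing Alon--Spencer (Lemma 5.1.1 and the remark on p.~71), and your argument is exactly the proof given there, so there is nothing further to compare.
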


\nin
Our use of this, which is reminiscent of
\cite{ES} (or see \cite[Sec.~5.6]{AS}), depends on
the following observation.

We consider bijections
$\gs:[N]\ra [N]$,
each for now regarded as a set of $N$ cells $(i,\gs(i))$ of an $N\times N$ array $M$.
We use \emph{pattern} to mean a set of cells (in $M$), no two on a line
(i.e.\ row or column), and define patterns $X,Y$ to be adjacent ($X\sim Y$)
if some line meets both.  Let $\bgs$ be a uniform bijection
and, for a pattern $X$, let
$A_X$ be the event $\{\bgs\supseteq X\}$,
noting that (with $(a)_t=a(a-1)\cdots (a-t+1)$)
\beq{pAX}
\pr(A_X) = 1/(N)_{|X|}.
\enq
\begin{prop}\label{ESprop}
If $X,X_1\dots X_t$ are patterns with $X\not\sim X_i ~\forall i$, then
\[
\mbox{$\pr(A_X|\wedge_{i=1}^t\bar{A}_{X_i})\leq \pr(A_X).$}
\]
\end{prop}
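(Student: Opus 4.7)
The strategy is to apply Bayes' rule and then condition on the row-image $\bgs(R(X))$. By Bayes, the claim $\pr(A_X\mid\wedge_i\bar{A}_{X_i})\le\pr(A_X)$ is equivalent to $\pr(\vee_i A_{X_i}\mid A_X)\ge\pr(\vee_i A_{X_i})$. Write $k=|X|$, $R=R(X)$, $C=C(X)$, $R^c=[N]\sm R$, and $C^c=[N]\sm C$. The non-adjacency hypothesis $X\not\sim X_i$ forces $X_i\subseteq R^c\times C^c$ for every $i$, so each $A_{X_i}$ depends only on $\bgs|_{R^c}$.

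Under $A_X$, $\bgs|_{R^c}$ is uniform on bijections $R^c\to C^c$, while unconditionally $S:=\bgs(R^c)$ is a uniform $(N-k)$-subset of $[N]$ and, given $\bgs(R^c)=S$, $\bgs|_{R^c}$ is a uniform bijection $R^c\to S$. Let $I(S)=\{i:C(X_i)\subseteq S\}$; for $i\notin I(S)$ the event $A_{X_i}$ is impossible under $\{\bgs(R^c)=S\}$ since its required columns are not in the image, so $\pr(\vee_i A_{X_i}\mid\bgs(R^c)=S)=\pr(\vee_{i\in I(S)}A_{X_i}\mid\bgs(R^c)=S)$. The key observation is that for any $J\subseteq I(S)$ with $\cup_{i\in J}X_i$ a valid pattern,
\[
\pr(\cap_{i\in J}A_{X_i}\mid\bgs(R^c)=S)=1/(N-k)_{|\cup_{i\in J}X_i|}
\]
(and $0$ otherwise), a value that does not depend on $S$.

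Consequently, inclusion-exclusion gives
\[
\pr(\vee_{i\in I(S)}A_{X_i}\mid\bgs(R^c)=S)=\pr(\vee_{i\in I(S)}A_{X_i}\mid\bgs(R^c)=C^c)\le\pr(\vee_{i\in[t]}A_{X_i}\mid\bgs(R^c)=C^c),
\]
the inequality by monotonicity of the union in the index set (with $I(S)\subseteq[t]=I(C^c)$). Averaging over $S$, and noting that $\bgs|_{R^c}$ has the same law under $A_X$ as under $\{\bgs(R^c)=C^c\}$---both are uniform bijections $R^c\to C^c$---gives $\pr(\vee_i A_{X_i})\le\pr(\vee_i A_{X_i}\mid A_X)$, as required. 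The only mildly delicate step is the $S$-independence of the inclusion-exclusion terms; this is exactly where the disjointness hypothesis does its work, and once that is in hand the rest reduces to the trivial monotonicity of unions over their index sets.
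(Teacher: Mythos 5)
Your proof is correct, but it follows a genuinely different route from the paper's. The paper proves the proposition by a switching/injection argument in the spirit of the Erd\H{o}s--Spencer treatment of Latin transversals: after normalizing $X=\{(i,i):i\in[k]\}$, it exhibits, for each choice of distinct targets $j_1\dots j_k$, an injection from $\{\gs:\gs(i)=i \ \forall i\in[k]\}$ into $\{\gs:\gs(i)=j_i\ \forall i\in[k]\}$ that preserves the event ``$\gs$ contains no $X_i$''; the map is built by viewing bijections as perfect matchings of $K_{N,N}$ and completing the alternating paths of $\gs\cup\tau$ to cycles, and the inequality follows by summing over the targets. You instead pass through Bayes to the equivalent $\pr(\vee_i A_{X_i}\mid A_X)\ge\pr(\vee_i A_{X_i})$, condition on the image $S=\bgs(R^c)$, and exploit the exact and $S$-independent value $1/(N-k)_{|\cup_{i\in J}X_i|}$ (or $0$) of each inclusion--exclusion term, so that the conditional probability of the available sub-union given $S$ equals the corresponding quantity at $S=C^c$, after which monotonicity in the index set and the identification of the law of $\bgs|_{R^c}$ under $A_X$ with its law under $\{\bgs(R^c)=C^c\}$ finish the job; all the steps check out (non-adjacency is what puts the $X_i$ in $R^c\times C^c$ and makes the terms $S$-independent, and the possibly-non-pattern unions correctly contribute $0$). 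The trade-off: the paper's injection is purely combinatorial, proves the slightly stronger statement target-by-target, and is the standard, robust lopsided-LLL device that survives in settings where exact probabilities are unavailable; your argument avoids constructing and verifying a switching map but leans on the special containment structure of the events $A_Y$ and the exact formula for uniform bijections, giving a cleaner ``probabilistic'' bookkeeping that is, however, more tied to this particular model.
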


\begin{proof}
We may assume $X=\{(i,i):i\in [k]\}$, so 
\beq{k1N}
\cup X_j\sub \{k+1\dots N\}^2.
\enq
Set $\R=\{\gs:\gs(i)=i~\forall i\in [k]\}$ (so $A_X=\{\bgs\in \R\}$).
With $B=\wedge_{i=1}^t\bar{A}_{X_i}$, it is enough to exhibit, for any distinct $j_1\dots j_i\in [N]$,
an injection $\psi:\R \ra \T:=\{\gs:\gs(i)=j_i~\forall i\in [k]\}$ satisfying
\beq{sigmaB}
\gs\in B\Ra \psi(\gs)\in B.
\enq

Here it's convenient to interpret a bijection $\gs$ as a perfect matching of $K_{N,N}$
(whose vertex set we regard as two copies of $[N]$).
For $\gs\in\R$ and $\tau:=\{(i,j_i):i\in [k]\}$ 
(also thought of as a matching of $K_{N,N}$), the components of
$\gs\cup \tau$ are paths and cycles, each alternating with respect to $(\gs,\tau)$
(with the obvious meaning; in particular an edge of $\gs\cap\tau$ is considered
an alternating 2-cycle),
and with the ends of the paths the vertices not covered by $\tau$.
(Some---many---of these paths may be single edges of $\gs$.)

We then take $\psi(\gs)$ to consist of $\tau$
together with all edges that complete path components of $\gs\cup \tau$ to cycles.
It is straightforward to check that $\psi$ has the desired properties;
that is, it maps $\R$ injectively to $\T$ and satisfies \eqref{sigmaB}.
(Both of these follow from the observation that the edges of $\psi(\gs)$ not in $\gs$ 
are precisely those not of the form $(i,i)$ that meet at least
one of the two copies of $[k]$.)\end{proof}

We return to \eqref{13.1'}.
We will use Proposition~\ref{ESprop} with $N=mr$ and $E$ and $T$ our two copies of $[N]$
(so $M$ is an $E\times T$ array).
Define a \emph{block} to be a subarray indexed by some $E_i\times T_j$
(denoted $B_{ij}$).
We consider two types of patterns (``loops" and ``repeats"):

\begin{itemize}

\item
[(L)]
two cells in the same block;

\item
[(R)]
for some $i\neq j$ and distinct $l_1\dots l_r$,
$2r$ cells, one in each of the blocks indexed by $\{i,j\}\times \{l_1\dots l_r\}$.

\end{itemize}

\mn
Then a bijection $\gs:E\ra T$ is simple iff it contains none of these patterns, and
\beq{degobs}
\mbox{each cell lies in $O(D)$ patterns of type L and $O(mn^{r-1}D^{2r-1})$ of type R.}
\enq

Now let $X_1\dots X_s$ run over patterns of types L and R, write $A_i$ for $A_{X_i}$,
and let $\gG$ be the graph on $[s]$ with adjacency corresponding to adjacency of 
patterns as in Proposition~\ref{ESprop}
(so $i\sim j$ iff $X_i\sim X_j$).
Since lines have size $mr =O(m)$ (and patterns have size $O(1)$),
each pattern is adjacent to $O(mD)$ patterns of type L and $O(m^2n^{r-1}D^{2r-1})$
of type R.
So if we take (say)
\[
x_i=\left\{\begin{array}{ll}
x:=2N^{-2}&\mbox{if $X_i$ is of type L,}\\
y:=2N^{-2r}&\mbox{if $X_i$ is of type R,}
\end{array}\right.
\]
then each of the products $\prod_{ij\in \gG}(1-x_j)$ in
\eqref{LLLhyp} is 
\[
(1-x)^{O(mD)}(1-y)^{O(m^2n^{r-1}D^{2r-1})} \sim 1
\]
(the asymptotic following from \eqref{Dmn}),
which with \eqref{pAX} implies \eqref{LLLhyp}.

Thus, since \eqref{degobs}
bounds the numbers of type L and R patterns by
$O(m^2D)$ and $O(m^3n^{r-1}D^{2r-1})$ respectively,
Lemma~\ref{LLL} gives (again using \eqref{Dmn})
\beq{gs.simple.last}
\pr(\mbox{$\bgs$ is simple}) \geq
(1-x)^{O(m^2D)}(1-y)^{O(m^3n^{r-1}D^{2r-1})}=  e^{-O(D)},
\enq
which is \eqref{13.1'}
and completes the proof of Lemma~\ref{Lgamma}.
(Note \eqref{gs.simple.last} fails for $r=2$---as it should, since 
Lemma~\ref{Lgamma} is not true in this case.)
\end{proof}

\subsection{Degrees}\label{Degrees}

In this section \emph{only} we 
take $m=m_{_T}$ and $\bH=\bH_T$.  
Notice that we may choose
$\bH$ by first choosing $\ud:=\ud(\bH)\in \sss$ 
($= \{(d_1\dots d_n): \sum d_i =m r\} $ as in \eqref{sss})
and then
$\bH$ itself uniformly from $\K(\ud)$ (see \eqref{Kud}). 

Now thinking of the law of $\ud(\bH)$, we set 
\[
\Ll=\{\ud\in \sss:d_i\geq \gd_i ~\forall i\}.
\]
(The $\gd_i$'s are our usual $\gd_x$'s, so are asymptotic to $\eps\log n$.)
Then with $\ph$ the probability measure on $\sss$ given by
\[   
\prh(\ud)\propto |\K(\ud)|,
\]   
we have
\[   
\pr_h(\Ll) =\gb(\K)
\]   
and 
\beq{prudH}
\mbox{$\pr(\ud(\bH)=\ud)= \prh(\ud|\Ll)$}
\enq
($= \ph(\ud)/\ph(\Ll)$ if $\ud\in \Ll$).
We compare $\pr_h$ to the probability measure $\pr_u$ on $\sss$ given by
\[
\mbox{$\pr_u(\ud)\propto (\prod d_i!)^{-1}$.}
\]
Thus $\pr_u(\ud)$ is the probability that $mr$ ($= m_{_T}r$) balls
placed uniformly and independently in urns $U_1\dots U_n$ produce the
occupation statistics $\ud$, and (by \eqref{Kd})
\[
\pr_h(\ud)\propto \gc(\ud)\pr_u(\ud).
\]

\mn

For better understanding the law of $\ud(\bH)$ (as in \eqref{prudH}) we will
use 
Lemma~\ref{Lgamma}
and the following easy observations, whose verifications we omit.
\begin{obs}\label{Obs0}  Under $\pr_u$, for any $L\sub [n]$, with $|L|=l$,
\[
\mbox{$\sum_{i\in L}d_i\sim {\rm Bin}(mr, l/n).$}
\]
\end{obs}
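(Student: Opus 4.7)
The plan is to identify $\pr_u$ with the distribution of occupation counts arising from placing $mr$ balls uniformly and independently into $n$ urns, and then read off the binomial as the count of balls landing in a prescribed set of urns. All of this is elementary, but it is worth spelling out what needs checking.

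First I would verify the identification. Let $\bb_1,\ldots,\bb_{mr}$ be i.i.d.\ uniform elements of $[n]$, and set $\bd_i=|\{k:\bb_k=i\}|$, so $\bd=(\bd_1,\dots,\bd_n)$ takes values in $\sss$. For any fixed $\ud\in\sss$, the number of ball-sequences producing occupancy $\ud$ is the multinomial coefficient $(mr)!/\prod d_i!$, whence
\[
\pr(\bd=\ud)=\frac{(mr)!}{\prod d_i!}\,n^{-mr}.
\]
Since the factor $(mr)!\,n^{-mr}$ does not depend on $\ud$, this law on $\sss$ agrees with $\pr_u$. Thus $\pr_u$ \emph{is} the law of $\bd$.

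Next I would compute the claimed marginal directly from this representation. For any $L\sub[n]$ with $|L|=l$, define $\bY_k=\mathbf{1}_{\{\bb_k\in L\}}$; these are i.i.d.\ Bernoulli$(l/n)$, and by construction
\[
\sum_{i\in L}\bd_i=\sum_{k=1}^{mr}\bY_k,
\]
which is therefore distributed as $\mathrm{Bin}(mr,l/n)$. Combined with the identification $\bd\sim\pr_u$ from the previous step, this gives exactly the statement of Observation~\ref{Obs0}.

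There is no real obstacle here; the only thing to notice is that restricting to $\sss$ (rather than working with unrestricted i.i.d.\ placements) is automatic because the constraint $\sum d_i=mr$ holds for every realization of $\bd$, so no conditioning is needed and the marginal computation proceeds without any correction factor.
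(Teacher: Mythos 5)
Your proof is correct and is exactly the intended argument: the paper itself introduces $\pr_u$ via the balls-in-urns interpretation (the multinomial identification you verify) and omits the verification of Observation 13.4 as easy, and your reduction of $\sum_{i\in L}d_i$ to a sum of i.i.d.\ Bernoulli indicators is the standard way to complete it.
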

\begin{obs}\label{Obs1}
If $\gc(\ud)> \xi $ for all $\ud\in {\JJJ}\sub \sss$, then for any $\eee\sub \sss$,
\[
\ph(\eee)/\ph({\JJJ})< \xi^{-1}\pu(\eee)/\pu({\JJJ}).
\]
\end{obs}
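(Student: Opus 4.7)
The plan is to exploit the proportionality $\ph(\ud) \propto \gc(\ud)\pu(\ud)$ together with the trivial bound $\gc(\ud) \leq 1$ (which holds since $\gc(\ud)$ is a probability, being the chance that a uniform configuration is simple). Writing the common normalizing constant as $c_h$, so that $\ph(\ud) = c_h \gc(\ud)\pu(\ud)$ for every $\ud\in\sss$, the key point is that we can bound $\gc$ from above on $\eee$ (by $1$) and from below on $\JJJ$ (by the hypothesis $\gc(\ud)>\xi$), after which the normalizing constant $c_h$ cancels in the ratio $\ph(\eee)/\ph(\JJJ)$.

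Concretely, I would first write
\[
\ph(\eee) \;=\; c_h\!\!\sum_{\ud\in\eee}\gc(\ud)\pu(\ud)\;\leq\; c_h\!\!\sum_{\ud\in\eee}\pu(\ud)\;=\;c_h\,\pu(\eee),
\]
using only $\gc\leq 1$. Then I would write the matching lower bound
\[
\ph(\JJJ) \;=\; c_h\!\!\sum_{\ud\in\JJJ}\gc(\ud)\pu(\ud)\;>\;c_h\,\xi\!\!\sum_{\ud\in\JJJ}\pu(\ud)\;=\;c_h\,\xi\,\pu(\JJJ),
\]
invoking the hypothesis on $\JJJ$. Dividing the two displays cancels $c_h$ and yields $\ph(\eee)/\ph(\JJJ) < \xi^{-1}\pu(\eee)/\pu(\JJJ)$, exactly the desired inequality.

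There is no real obstacle here: the observation is a one-line manipulation of the defining proportionality, and no input from Lemma~\ref{Lgamma} or Observation~\ref{Obs0} is needed. The only ``thing to check'' is the universal upper bound $\gc(\ud)\leq 1$, which is immediate from its probabilistic definition in the paragraph surrounding \eqref{Kd}; any other crude bound $\gc\leq C$ would just insert a factor $C$ on the right-hand side, but $C=1$ suffices and gives the stated constant.
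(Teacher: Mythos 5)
Your argument is correct and is essentially the intended one: the paper omits the verification of this observation, and your one-line manipulation of the proportionality $\ph(\ud)\propto\gc(\ud)\pu(\ud)$ (bounding $\gc\leq 1$ on $\eee$ and $\gc>\xi$ on $\JJJ$, with the normalizing constant cancelling in the ratio) is exactly the computation the paper has in mind.
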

\begin{obs}\label{Obs2}
For any $i\in [n]$, $\pu(\Ll|d_i=k)$ is decreasing on $\{k\geq \gd_i\}$,
implying that, for any $J\geq \gd_i$,
\begin{eqnarray*}    
\pu(d_i\geq J|\Ll) &=&
\frac{\sum_{k\geq J}\pu(d_i=k)\pu(\Ll|d_i=k)}{\sum_{k\geq \gd_i}\pu(d_i=k)\pu(\Ll|d_i=k)}
\\
&\leq&
\frac{\pu(d_i\geq J)}{\pu(d_i\geq \gd_i)}
~\sim ~\pu(d_i\geq J).
\end{eqnarray*}
\end{obs}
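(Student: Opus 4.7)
The plan is to establish the monotonicity by a coupling and then deduce the displayed inequalities by a standard convex-combination manipulation.

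For the monotonicity, I would use the fact that under $\pu$ the conditional distribution of $(d_j:j\neq i)$ given $d_i=k$ is exactly that of placing $mr-k$ balls i.i.d.\ uniformly in the urns $[n]\sm\{i\}$ (a consequence of the i.i.d.\ description of $\pu$ and symmetry among the balls going to urn $i$). Given $\gd_i\leq k<k'$, I couple the two conditional distributions by performing $mr-k$ uniform placements in $[n]\sm\{i\}$ and deleting the last $k'-k$ of them to obtain the $d_i=k'$ scenario. Under this coupling every $d_j$ with $j\neq i$ can only decrease as we pass from the $k$-scenario to the $k'$-scenario, so the event $\{d_j\geq \gd_j\ \forall j\neq i\}$, which coincides with $\Ll$ on $\{d_i\geq\gd_i\}$, is preserved under passage to the smaller $k$. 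This yields $\pu(\Ll\mid d_i=k)\geq \pu(\Ll\mid d_i=k')$.

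For the display itself, set $f(k)=\pu(\Ll\mid d_i=k)$ and $p(k)=\pu(d_i=k)$, noting that $f\equiv 0$ on $\{k<\gd_i\}$. The first equality is just the definition of conditional probability (restricting the denominator sum to $k\geq \gd_i$ costs nothing). The inequality in the display is then equivalent to saying that the $p$-weighted mean of $f$ on $\{k\geq J\}$ is at most its $p$-weighted mean on $\{k\geq \gd_i\}$, which is immediate from monotonicity: splitting the larger range into $\{\gd_i\leq k<J\}$ and $\{k\geq J\}$, the $f$-values on the first piece are $\geq f(J)$ and those on the second are $\leq f(J)$, so the overall average is a convex combination whose $\{k\geq J\}$-part sits at or below the pivot $f(J)$ and is therefore at most the combined average.

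Finally, the asymptotic $\pu(d_i\geq J)/\pu(d_i\geq \gd_i)\sim \pu(d_i\geq J)$ follows from Observation~\ref{Obs0} (which gives $d_i\sim\mathrm{Bin}(mr,1/n)$ with mean $mr/n\sim\log n$, comfortably above $\gd_i\sim\eps\log n$) together with Chernoff (Theorem~\ref{T2.1}), which yields $\pu(d_i\geq \gd_i)=1-o(1)$. The only step of any real substance is the monotonicity coupling; everything else is routine bookkeeping.
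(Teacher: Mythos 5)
Your proposal is correct and follows essentially the same route as the paper, which disposes of this observation with a one-line remark: the monotonicity of $\pu(\Ll\mid d_i=k)$ is a ``trivial coupling argument'' (your ball-deletion coupling is exactly the intended one), the displayed inequality is then the immediate averaging consequence you spell out, and the final ``$\sim$'' comes from Observation~\ref{Obs0} together with Theorem~\ref{T2.1}, using $m\sim (n/r)\log n$ and $\gd_i\sim\eps\log n$. Nothing to add beyond the fact that you have filled in details the paper omits.
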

\nin
(The initial assertion is a trivial coupling argument
and, since $m\sim (n/r)\log n$, the ``$\sim$" is a tiny consequence of Observation~\ref{Obs0} and Theorem~\ref{T2.1}.)

\mn

We next note that Theorem~\ref{T2.1} and Observation~\ref{Obs0} give  (for any $i$,
using $\E d_i =mr/n\sim \log n$)
\[
\pu(d_i>3\log n)< \exp\left[-(1-o(1))\tfrac{4\log n}{2(1+2/3)}\right]=n^{-6/5+o(1)}.
\]
Thus, now using Observation~\ref{Obs2},
\beq{mdiL}
\pu(\max d_i> 3\log n|\Ll)<n^{-1/5+o(1)} = o(1).
\enq

\begin{lemma}\label{LClogn}
For large enough $\kappa$,
\beq{LCl}
\pr(\max d_i(\bH)> \kappa\log n)< n^{-\kappa}.
\enq
\end{lemma}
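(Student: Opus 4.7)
The plan is to first bound the tail of $\max d_i$ under the simpler urn measure $\pu$ (conditioned on $\Ll$), and then transfer to $\ph$ via Observation~\ref{Obs1} and Lemma~\ref{Lgamma}. The calibration driving the approach is that a Chernoff-type bound provides super-polynomial savings in $n$, while the configuration-model correction $\gc(\ud)^{-1}$ costs only a polynomial factor.

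First I would handle $\pu$. By Observation~\ref{Obs0} each $d_i$ is marginally distributed as $\mathrm{Bin}(mr,1/n)$ with mean $\mu = mr/n\sim\log n$, and Theorem~\ref{Cher'} with $K = (\kappa\log n)/\mu\sim\kappa$ gives
\[
\pu(d_i > \kappa\log n) < \exp[-\kappa\log n\,\log(\kappa/e)] = n^{-\kappa\log(\kappa/e)+o(1)}.
\]
Observation~\ref{Obs2} transfers this bound (up to a $1+o(1)$ factor) to the $\Ll$-conditional measure for each fixed $i$, so a union bound over $i\in [n]$ yields
\[
\pu(\max d_i > \kappa\log n \mid \Ll)\leq n^{1-\kappa\log(\kappa/e)+o(1)}.
\]

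Next I would transfer to $\ph$. Set $\JJJ = \Ll\cap\{\max d_i\leq 3\log n\}$ and $\eee = \Ll\cap\{\max d_i > \kappa\log n\}$. For every $\ud\in\JJJ$ we have $D=\max d_i\leq 3\log n$, and the condition \eqref{Dmn} of Lemma~\ref{Lgamma} is easily verified for $r\geq 3$ (it reduces to $n^{r-2}\gg(\log n)^2$); so $\gc(\ud)\geq e^{-O(\log n)} = n^{-C}$ for some fixed $C$. Observation~\ref{Obs1} with $\xi = n^{-C}$ then yields
\[
\ph(\eee)/\ph(\JJJ)\leq n^C\pu(\eee)/\pu(\JJJ).
\]
Since $\JJJ\sub\Ll$, $\pr(\max d_i(\bH)>\kappa\log n) = \ph(\eee\mid\Ll)\leq \ph(\eee)/\ph(\JJJ)$; meanwhile \eqref{mdiL} gives $\pu(\JJJ)\sim\pu(\Ll)$, so $\pu(\eee)/\pu(\JJJ)\sim\pu(\eee\mid\Ll)$, which was just bounded. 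Altogether,
\[
\pr(\max d_i(\bH)>\kappa\log n)\leq n^{C+1-\kappa\log(\kappa/e)+o(1)} < n^{-\kappa}
\]
once $\kappa$ is large enough that $\kappa(\log(\kappa/e)-1) > C+1$.

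There is no serious obstacle: the argument is essentially a two-step Chernoff-plus-transfer, with the first step producing the savings and the second incurring only a polynomial cost. The one point needing care is the restriction from $\Ll$ to $\JJJ$, chosen so that Lemma~\ref{Lgamma} controls $\gc^{-1}$ by a fixed polynomial; any fixed multiple of $\log n$ in place of $3\log n$ would serve equally well thanks to \eqref{mdiL}.
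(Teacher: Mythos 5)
Your proposal is correct and follows essentially the same route as the paper: the same split into $\JJJ=\Ll\cap\{\max d_i\leq 3\log n\}$ and $\eee=\Ll\cap\{\max d_i>\kappa\log n\}$, the Chernoff bound of Theorem~\ref{Cher'} under $\pu$ transferred via Observations~\ref{Obs2} and \ref{Obs1} together with Lemma~\ref{Lgamma} and \eqref{mdiL}. The only differences are cosmetic (you carry the explicit exponent $\kappa\log(\kappa/e)$ where the paper just writes $n^{-2\kappa}$, and you verify \eqref{Dmn} explicitly).
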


\begin{proof}
Set
\[
\mbox{${\JJJ}=\{\max d_i\leq 3\log n\}\wedge\Ll~$ and 
$~\eee=\{\max d_i>\kappa\log n\}\wedge\Ll$.}
\]
Lemma~\ref{Lgamma} gives
\beq{maxdi3}
\mbox{if 
$\max d_i\leq 3\log n $ then $\gc(\ud)> n^{-K}$}
\enq

\nin
for some fixed $K$,
while Observation~\ref{Obs0} and Theorem~\ref{Cher'} imply
\beq{nK2}
\pu(d_i >\kappa\log n)< n^{-2\kappa }
\enq
for large enough $\kappa$ (the actual bound being essentially $n^{-\kappa\log (\kappa/e)}$).
The l.h.s.\ of \eqref{LCl} is then, again for large enough $\kappa$,
\begin{eqnarray*}
\ph(\eee)/\ph(\Ll)&\leq & \ph(\eee)/\ph({\JJJ})~<~n^{K}\pu(\eee)/\pu({\JJJ}) \\
&\sim& n^{K}\pu(\max d_i>\kappa\log n|\Ll)
~<~(1+o(1))n^{-2\kappa+K+1} ~<~ n^{-\kappa},
\end{eqnarray*}
with the second inequality given by \eqref{maxdi3} and Observation~\ref{Obs1};
the ``$\sim$'' by \eqref{mdiL}; and the third inequality by \eqref{nK2} and 
Observation~\ref{Obs2}.\end{proof}

\begin{lemma}\label{Lanemic}
With $\gl = m/|\K|$,
$~\pr(\mbox{$\K_\gl$ anemic}) < \exp[-2n^{2\gd}].$
\end{lemma}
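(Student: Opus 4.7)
The plan is to apply the read-$k$ concentration bound Theorem~\ref{GH} to the ``small-degree'' indicators. For each $x\in V$, let $\gz_x=\mathbf{1}[d_{\K_\gl}(x)<2\eps\log n]$. Each $\gz_x$ is a function of the independent Bernoulli variables $\mathbf{1}[A\in\K_\gl]$ ($A\in\K$), and each such Bernoulli affects exactly $r$ of the $\gz_x$'s (one per vertex of $A$); hence $\{\gz_x\}_{x\in V}$ is a read-$r$ family. The event that $\K_\gl$ is anemic is exactly $\{\sum_x\gz_x\ge 2rn^{2\gd}\}$, so it suffices to bound the probability of this tail event by $\exp[-2n^{2\gd}]$.

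Since $d_{\K_\gl}(x)\sim\mathrm{Bin}(\C{n-1}{r-1},\gl)$ has mean $\mu=mr/n\sim\log n$, the first form of \eqref{eq:ChernoffLower} yields
\[
\rho:=\max_x\pr(\gz_x=1)\leq\exp[-(1-o(1))\mu\,\varphi(-(1-2\eps))]=n^{-c+o(1)},
\]
with $c:=\varphi(-(1-2\eps))=1-2\eps+2\eps\log(2\eps)$. Combining this with the definition $\gd=\eps\log(3/\eps)$ from \eqref{gd} gives the key identity
\[
c+2\gd-1=2\eps(\log 6-1)>0.
\]
This is the only place the particular constant in the definition of $\gd$ plays a role (any constant exceeding $e$ would do); everything else is routine.

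Set $\gl_0:=2rn^{2\gd-1}$, so that $\gl_0 n=2rn^{2\gd}$. The identity above gives $\gl_0/\rho\ge n^{2\eps(\log 6-1)-o(1)}\to\infty$, so $\gl_0\in[\rho,1]$ for large $n$, and for the binary divergence
\[
D(\gl_0\|\rho)\sim\gl_0\log(\gl_0/\rho)\ge 2rn^{2\gd-1}\cdot 2\eps(\log 6-1)\log n\cdot(1-o(1))
\]
(the asymptotic valid since $\rho\ll\gl_0\to 0$). Theorem~\ref{GH} with $t=n$ and $k=r$ then yields
\[
\pr(\K_\gl\text{ anemic})<\exp\!\left[-(n/r)D(\gl_0\|\rho)\right]\leq\exp[-\gO(n^{2\gd}\log n)],
\]
which is comfortably below $\exp[-2n^{2\gd}]$ for large $n$. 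I do not anticipate any genuine obstacle: the whole argument reduces to the one-line inequality $c+2\gd>1$ together with standard manipulations of $\varphi$ and $D(\cdot\|\cdot)$.
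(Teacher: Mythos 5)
Your proof is correct and follows essentially the same route as the paper: the same indicators $\gz_x$, the same read-$r$ observation, and the same application of Theorem~\ref{GH} with a Chernoff estimate on $\pr(\gz_x=1)$. The only difference is bookkeeping: you feed Theorem~\ref{GH} the true bound $\rho=n^{-c+o(1)}$ (rather than the paper's rounded $\rho:=n^{-1+2\gd}$ with the small-$\rho$ expansion of $D(2r\rho\|\rho)$), which yields the stronger estimate $\exp[-\gO(n^{2\gd}\log n)]$ and makes the margin over $\exp[-2n^{2\gd}]$ transparent.
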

\nin
(We remind once more that $m=m_{_T}$ and recall that ``anemic" was defined in \eqref{anemic}.)

\begin{proof}
With $\gz_x$ the indicator of $\{d_{\K_\gl}(x)<2\eps\log n\}$,
an easy calculation gives
\[
\E \gz_x < n^{-1+2\gd}=:\rho.
\]
(Like \eqref{deg.dev.bd}, this uses the first bound in 
\eqref{eq:ChernoffLower} of Theorem~\ref{T2.1}; of course $\gz_x$ is binomial
while its counterpart in \eqref{deg.dev.bd} was hypergeometric, but the bound applies to both.)

On the other hand the $\gz_x$'s form a read-$r$ family (with corresponding $\psi_i$'s
the indicators $\textbf{1}_{\{A\in \K_\gl\}}$), so Theorem~\ref{GH} gives 
\begin{eqnarray*}
\pr(\mbox{$\K_\gl$ anemic}) &=&\mbox{$\pr(\sum\xi_x\geq 2r n^{2\gd}) $}\\
&<& \exp[-D(2r\rho\|\rho)n/r] < \exp[-2n^{2\gd}]
\end{eqnarray*}
(using
$D(K\rho\|\rho)=(K\log (K/e)+1)\rho +O(\rho^2)$ for fixed $K$ and small $\rho$).
\end{proof}

\begin{lemma}\label{LB}
For sufficiently large $\go \ll \log n$,
\beq{13.9a}
\pr(\max d_{\bH}(x,y) > \go)  < n^{-\gO(\go)}
\enq
(the maximum over distinct vertices $x,y$) and
\beq{13.9b}
\mbox{$\pr(\max_{Z\in \K}\sum_{y\not\in Z}(\gd_y-d_{\bH-Z}(y))^+>\go) 
< n^{-\gO(\go)}.$}
\enq
\end{lemma}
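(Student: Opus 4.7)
The plan is to prove both parts by the same reduction chain to the configuration model of Section~\ref{Configs}: pass from $\pr(\cdot)$ to $\prh(\cdot\,|\,\Ll)$, then (via Observations~\ref{Obs1}, \ref{Obs2}) to $\pu(\cdot\,|\,\Ll)$, and finally to the uniform unsimplified configuration measure via Lemma~\ref{Lgamma}. Lemma~\ref{LClogn} lets us restrict throughout to degree sequences $\ud\in\Ll$ with $\max d_i\leq \kappa\log n$ (costing $n^{-\kappa}$, absorbable); the remaining reductions cost a factor $n^{O(1)}$, easily dominated by the target tail $n^{-\gO(\go)}$.

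For \eqref{13.9a}, fix a pair of distinct $x,y\in V$. In the uniform (non-simple) configuration model with such a $\ud$, the codegree $d_{\bH}(x,y)$ equals the number of pre-edges meeting both $T_x$ and $T_y$; since any pair of pre-verts $(a,b)\in T_x\times T_y$ lies in a common pre-edge with probability $(r-1)/(mr-1)$, a standard $\go$-th moment (or Poisson) computation bounds $\pr(d_{\bH}(x,y)\geq \go\,|\,\ud)$ by $(Cd_xd_y/m)^{\go}/\go!\leq n^{-\go(1-o(1))}$. Multiplying by the $n^{O(1)}$ reduction cost and union-bounding over the $\C{n}{2}$ pairs $\{x,y\}$ yields \eqref{13.9a}.

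For \eqref{13.9b}, fix $Z\in\K$; it will suffice to bound the relevant probability by $n^{-\gO(\go)-r}$ and then union-bound over $Z$. Since $\bH\in\mL$ we may rewrite
\[
(\gd_y-d_{\bH-Z}(y))^+\;=\;(\mbox{loss}_y(Z)-a_y)^+,
\]
where $\mbox{loss}_y(Z)=d_{\bH}(y)-d_{\bH-Z}(y)$ counts edges at $y$ meeting $Z$ and $a_y=d_{\bH}(y)-\gd_y\geq 0$ is the per-vertex slack. In the uniform configuration with $\ud$ fixed, $\mbox{loss}_y(Z)$ is stochastically dominated by $\mbox{Bin}(d_y,p)$ with $p=(r-1)|T_Z|/(mr-1)=O((\log n)/n)$, so
\[
\E[(\mbox{loss}_y(Z)-a_y)^+]\;\leq\;O\!\left(\frac{d_y}{n}\right)^{a_y+1}.
\]
Combined with the urn-model bound (cf.\ Observations~\ref{Obs0}, \ref{Obs2}) that $|\{y:a_y=k\}|=n^{O(\gd)}$ for constant $k$, summing in $y$ gives $\E[\sum_y(\mbox{loss}_y(Z)-a_y)^+]=o(1)$. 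A $\go$-th moment estimate\,---\,or alternatively a read-$r$ concentration bound in the spirit of Theorem~\ref{GH} applied to the indicators $B_y=\mathbf{1}\{\mbox{loss}_y(Z)>a_y\}$ together with the deterministic truncation $\mbox{loss}_y(Z)\leq rC$ coming from the codegree bound \eqref{13.9a} for a suitable cutoff $C=\gO(\go)$\,---\,then upgrades this to $\pr(\sum_y(\mbox{loss}_y(Z)-a_y)^+>\go)\leq n^{-\gO(\go)-r}$ for each fixed $Z$.

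The main obstacle is part \eqref{13.9b}. The overflow sum is concentrated on the ``near-threshold'' set $\{y:a_y=O(1)\}$, which has polynomial (in $n$) rather than constant size, and both $d_{\bH}(y)$ and $\mbox{loss}_y(Z)$ depend on the same local structure (the edges at $y$); carefully disentangling these correlations, by first conditioning on $\ud$ and then analyzing losses in the residual uniform simple configuration, is the delicate part of the argument.
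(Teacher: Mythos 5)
Your overall strategy---control the degree sequence, condition on $\ud$, pass to the configuration model via Lemma~\ref{Lgamma}, and then count---is the same as the paper's, but two steps do not close as written. First, the parameter accounting: you claim the reduction to the unsimplified configuration measure costs only $n^{O(1)}$. For the degree restriction itself to fail with probability $n^{-\gO(\go)}$ you must take $\max d_i\le \kappa\log n$ with $\kappa=\Theta(\go)$ (Lemma~\ref{LClogn} gives $n^{-\kappa}$), and then Lemma~\ref{Lgamma} only guarantees $\gc(\ud)\ge n^{-c\kappa}$, so the simplicity cost is $n^{\Theta(\go)}$, not $n^{O(1)}$; if instead you keep $\kappa=O(1)$ the simplicity cost is polynomial but the restriction fails with probability only $n^{-O(1)}$, which is not $n^{-\gO(\go)}$. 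The two exponents have to be balanced against the event bounds---this is exactly why the paper picks $\kappa=\Theta(\go)$ with the constant small enough that $\gc(\ud)>n^{-\go/(2r)}$, which is then dominated by event probabilities of order $n^{-(1-o(1))\go}$ for \eqref{13.9a} and $n^{-(1-o(1))\go/r}$ for \eqref{13.9b}. Your part \eqref{13.9a} survives this correction, but the accounting as stated is wrong and the fix is not automatic: it constrains how small the constant in $\kappa=\Theta(\go)$ must be relative to the exponents you can prove for the events.

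The more serious gap is the final concentration step for \eqref{13.9b}, which you yourself flag as ``the delicate part'' but do not supply. Theorem~\ref{GH} requires the indicators to be functions of \emph{independent} variables; in the configuration model conditioned on $\ud$ there is no such product structure (the paper uses Theorem~\ref{GH} only for the genuinely binomial $\K_\gl$ in Lemma~\ref{Lanemic}), so a ``read-$r$ bound in the spirit of Theorem~\ref{GH}'' is not available off the shelf. Moreover the truncation scheme cannot work even formally: on the event that all codegrees are at most $\go$, a single vertex can still contribute up to $r\go>\go$ to the overflow sum, so $\gS(Z)>\go$ forces only \emph{one} bad vertex, and no bound on indicators $B_y$ can then yield $n^{-\gO(\go)}$. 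The missing idea is the paper's reduction: if no codegree exceeds $\go$, then any $y$ with $d_y\ge 2\eps\log n$ has slack $a_y\more \eps\log n\gg r\go\ge \mbox{loss}_y(Z)$ and contributes nothing, so the overflow is supported on the low-degree set $I$ (of size $O(n^{2\gd})$ by non-anemia), and $\gS(Z)\le (r-1)\,|\{A: A\cap Z\neq\0\neq A\cap I\}|$; hence $\gS(Z)>\go$ forces more than $\go/r$ edges meeting both $Z$ and $I$, an event whose configuration-model probability is bounded by direct counting (a factor $\C{m}{\go/r}$ times per-edge probabilities of order $n^{-1+2\gd+o(1)}$), giving $n^{-\gO(\go)}$ with enough margin to absorb the simplicity cost and the union over $Z$. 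You have all the ingredients of this step (near-threshold vertices, rarity of losses, the codegree bound from \eqref{13.9a}) but not the step itself, so the proof of \eqref{13.9b} is genuinely incomplete as proposed.
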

\nin
(Recall that \eqref{13.9b} was promised at \eqref{PhTPx} and used there to show
that $\cap_t\mR^4_t$ is likely (see \eqref{prR2}.)

\begin{proof}
Let
\[
\D = \{\ud:\max d_i< \kappa\log n\},
\]
with $\kappa=\Theta(\go)$ chosen so 
\beq{dDgamma}
\ud\in \D ~\Ra ~ \gc(\ud)> n^{-\go/(2r)}  
\enq
(see Lemma~\ref{Lgamma}), 
and
\[
\N = \{\ud:|\{i:d_i< 2\eps\log n\}|<2 rn^{2\gd}\}
\]
(so $\{\ud(\h)\not\in\N\}=\{\mbox{$\h$ anemic}\}$).
Then Lemma~\ref{LClogn},
its use justified by our assumption that $\go$ is somewhat large, 
says
\beq{PnotD}
\pr(\ud(\bH)\not\in \D)<n^{-\kappa} \,\,\, (=n^{-\gO(\go)}),
\enq
while Lemma~\ref{Lanemic} implies
\beq{PnotN}
\pr(\ud(\bH)\not\in \N) \,\,(=\pr(\mbox{$\bH$ anemic}))~ < \exp[-(2-o(1))n^{2\gd}];
\enq
this follows from 
\[
\exp[-2n^{2\gd}] ~>~ \pr(\mbox{$\K_\gl$ anemic}) ~>~
\pr(|\K_\gl|=m)\pr (\K_\gl \in \mL||\K_\gl|=m)
\pr(\mbox{$\K_\gl$ anemic}|\K_\gl\in \mL_T),
\]
since (i) $\pr(|\K_\gl|=m) \asymp m^{-1/2}$ 
(this is standard and easy);
(ii) $\pr (\K_\gl \in \mL||\K_\gl|=m)=\gb(\K)\more \exp[-n^{\gd}]$ 
(see Corollary~\ref{betacor}); and (iii) on $\{\K_\gl\in \mL_T\}$, $\K_\gl$
is distributed as $\bH$.

By \eqref{PnotD} and \eqref{PnotN},
Lemma~\ref{LB} will follow if we show that, for each
$\ud\in \D\cap\N\cap \Ll$,
\eqref{13.9a} and \eqref{13.9b} hold with $\bH$ replaced by $\bG$ chosen
uniformly from $\K(\ud)$;
so we fix such a $\ud$ and choose $\bG$ in this way.

We will again get at this using the configuration model;
thus we fix the partition $T=\cup T_i$ with $|T_i|=d_i$,
let $\pi:T\ra V $ ($=[n]$) be the corresponding projection,
and for a configuration $F$ use ``$F\in \mG$" (with $\mG$ TBA)
to mean $F$ \emph{is simple and}
$\pi(F)\in \mG$.
Then for a uniformly chosen configuration $\FF$ we have
\beq{PGB}
\pr(\bG\in\mG)=\pr(\FF\in \mG|\FF ~\text{simple}) < \pr(\FF\in \mG)n^{\go/(2r)}
\enq
(with the inequality given by \eqref{dDgamma}).

As earlier, we think of random maps; 
say $\FF=\{\bgs(E_1)\dots \bgs(E_m)\}$, with
$\bgs:E\ra T$ a uniform bijection (and $E=\cup E_i$ as in the proof of Lemma~\ref{Lgamma}).

For \eqref{13.9a}, we have, with $\mG=\{\g: \max d_\g(x,y) > \go\}$,
\beq{PEB1}
\pr(\FF\in\mG) < \C{n}{2}\C{m}{\go}(r(r-1))^\go\left(\frac{\kappa\log n}{mr}\right)^{2\go}
=O(n^{-\go+o(\go)+2}).
\enq
The first three terms of the first bound 
correspond to choosing (i)  $x,y\in [n]$, (ii) $\go$ 
of the $E_i$'s to map to 
preimages of edges containing $x,y$, and
(iii)  elements of these $E_i$'s to map to $T_x$ and $T_y$;
and the last term 
bounds the probability that these choices behave as desired,
using \eqref{pAX} and $\max d_i< \kappa\log n$.
For the final bound recall $\kappa=\Theta(\go) \ll \log n$
($\go=n^{o(1)}$ is enough) and $mr\sim n\log n$.
The combination of \eqref{PGB} and \eqref{PEB1} then gives \eqref{13.9a}
(with $\bG$ in place of $\bH$).  The $\go/(2r)$ in \eqref{dDgamma} and \eqref{PGB} is 
overkill here, but is needed for \eqref{13.9b}.

\mn

For \eqref{13.9b}, fix $Z $ and
let $I=\{i\in V\sm Z:d_i<2\eps \log n\}$---so 
$   
|I|<2rn^{2\gd}
$
since $\ud\in \N$---and let
\beq{bGlast}
\mG = \{\g: |\{A\in \g: A\cap Z\neq\0\neq A\cap I\}|>\go/r\}.
\enq
Then for
$\gS(Z):=\sum_{y\not\in Z}(\gd_y-d_{\bG-Z}(y))^+ >\go$, we must have either
$d_{\bG} (x,y)> (2\eps \log n-\gd_y) $ ($>\go$) 
for some $x\in Z$ and $y\in V\sm (Z\cup I)$---which we have just shown happens
with probability $n^{-\gO(\go)}$---or $\bG\in \mG$
(since, absent such a large codegree, the only $y$'s that can contribute to 
$\gS(Z)$ are those in $I$
and $\gS(Z)$ is at most $r-1$ times the cardinality in \eqref{bGlast}).
We then have (with justification similar to that for \eqref{PEB1})
\begin{eqnarray*}
\pr(\FF\in \mG) &<& \C{m}{\go/r}(r(r-1))^{\go/r}\left(\frac{r\kappa\log n}{mr}\right)^{\go/r}
\left(\frac{|I|\cdot 2\eps\log n}{mr}\right)^{\go/r}\\
&<&n^{-(1+2\gd -o(1))\go/r},
\end{eqnarray*}
and combining with \eqref{PGB} and multiplying by $n^r$ for the choice of $Z$
(and recalling $\go$ is ``sufficiently large'')
gives \eqref{13.9b}.
\end{proof}

\subsection{Back to $\mR$}\label{AppR}

Here, finally, we fill in the remaining promises from Section~\ref{SecR},
namely \eqref{Ri'}
(which means dealing with \eqref{Rg0}-\eqref{Rg2} and \eqref{m3mT}), 
\eqref{PvpZHT} and \eqref{prR3}.

We now use $d_t(\cdot)$ for degree in $\bH_t$ and revert to the ``default''
$m=m_t$.
As in Section~\ref{PLC}, we
work with the generation of $\bH_t$ in \eqref{Generation}:
\beq{bHTbG}
\bH_t=\bH_T\cup \bG,
\enq 
with $\bH_T$ uniform from $\mL_T$
and $\bG$ uniform from $\C{\K\sm\bH_T}{m-m_{_T}}$.
This supports the following little device, 
which will be useful in
the arguments for \eqref{Rg0}, \eqref{PvpZHT} and \eqref{prR3}.

Given $t$,
let $(\bH,\bU)$ be the random pair gotten by choosing
\beq{HandU}
\mbox{$\bH$ uniformly from $\C{\K}{m}$ ($=\K^t$) and then
$\bU$ uniformly from $\C{\bH}{m_{_T}}$.}
\enq
In this section $\bH$ will always be as in \eqref{HandU}
(\emph{not} $\bH_t$ as it was in Section~\ref{Degrees}).
In each application we will have some property $\mG$ for which we would like to show
$\pr(\bH_t\in \mG)$ is small, and
(\emph{slightly} echoing Section~\ref{PLC})
will exploit information gotten by reversing the order in \eqref{HandU};
that is, by choosing
\[
\mbox{$\bU$ uniformly from $\C{\K}{m_{_T}}$ and $\bH$ uniformly from
$\{\h\in  \K^t:\h\supseteq \bU\}$.}
\]
Thinking of the process in this way and setting
\beq{theta}
\Theta=\pr(\bH\in \mG,\bU\in \mL),
\enq
we have
\beq{theta=}
\Theta ~=~ \pr(\bU\in \mL)\pr(\bH\in \mG|\bU\in \mL)
~=~\gb(\K)\pr(\bH_t\in \mG)
\enq
(since on  $\{\bU\in \mL\}$, $\bH$ is distributed as $\bH_t$),
which we will combine with upper bounds on $\Theta$
based on the viewpoint in \eqref{HandU}.

\mn

\begin{proof}[Proof of \eqref{Ri'}]
Recall this says that 
w.h.p.\ $\bH_t$ satisfies \eqref{Rg0}-\eqref{Rg2} and \eqref{m3mT}
for all $t\leq T$.

\mn

For \eqref{Rg0} 
we may appeal to \cite{AsSh}:
as shown there---see the paragraph containing (131)---the probability that 
$\bH$ as in \eqref{HandU} violates \eqref{Rg0} is $e^{-\gO(n)}$.
(Precisely, with $\theta = (\log n)^{-1/3}$, it is shown that
$e^{-\gO(n)}$ bounds the probability that $d_{\bH}(x)\neq (1\pm \theta) D_{\bH}$
for at least $\theta n$ vertices $x$.)

Then with $\mG =\{\h\sub \K:\mbox{$\h$ violates \eqref{Rg0}}\}$ (and $\Theta$ as in \eqref{theta}), 
we have 
$\Theta< \pr(\bH\in \mG) = \exp[-\gO(n)]$ and, using \eqref{theta=} and Corollary~\ref{betacor},
\[
(\pr (\mbox{$\bH_t$ violates \eqref{Rg0}})=) \,\,\,
\pr(\bH_t\in \mG) = \Theta/\gb(\K)= \exp[-\gO(n)].
\]

\mn

For \eqref{Rg1} and \eqref{m3mT} (with the lower bound in the former contained in
the latter, as observed following \eqref{m3mT}), we use \eqref{bHTbG}, noting that,
given $\bH_T$, each 
$d_{\bG}(x)$ is hypergeometric with 
\beq{mux}
\mu_x:=
\E d_{\bG}(x) = \frac{D_{\K}-d_T(x)}{|\K|-m_{_T}}(m-m_{_T})
\sim (1-m_{_T}/m)D_m
\enq
(since $D_m= D_{\K}m/|\K|$).

For the upper bound in \eqref{Rg1} we first note that 
$\gD_{\bH_T} = O(\log n)$ w.h.p.\ by Lemma~\ref{LClogn}
(in which, recall, $\bH$ was $\bH_T$).
Then for $\bG$, Theorem~\ref{Cher'} gives (very wastefully but we don't care)
\[
\pr(\exists x,t \,\, d_t(x) > 3rD_m) < n^{r+1}\exp[-3rD_m\log (3r/e)] =o(1).
\]

For \eqref{m3mT} it will be enough to consider $\bG$.
Here \eqref{mux} and $D_m\sim  (m/m_{_T})\log n$ imply $\mu_x\more (2r-1)\log n$, and then
Theorem~\ref{T2.1}
(see the first bound in \eqref{eq:ChernoffLower}) gives
(for any $x$)
\beq{prdbGx}
\pr(d_{\bG}(x)< 2\eps D_m) <   \exp[-\mu_x\vp(-1+2\eps D_m/\mu_x)] 
< n^{-(2r-1)+O(\eps\log (1/\eps))}.
\enq

For \eqref{Rg2}, we again use \eqref{bHTbG}, 
noting to begin that Lemma~\ref{LB} says that w.h.p.\ $\bH_T$ has maximum codegree $O(1)$.
For $\bG$ we again use Theorem~\ref{Cher'} (with plenty of room):
each $d_{\bG}(x,y)$ is hypergeometric with mean

\[
\frac{d_{\K}(x,y)-d_T(x,y)}{|\K|-m_{_T}}(m-m_{_T})    < rD_m/n,
\]
whence $\pr(\max d_t(x,y) > CD_m/n) < n^{-(r+1)}$ for a suitable fixed $C$.
\end{proof}

\begin{proof}[Proof of \eqref{PvpZHT}]
Given $Z\in \K$, set
\[
\mG=\{\h\sub \K:\barvp_Z(\h)<\eta \gb(\K)\},
\]
so \eqref{PvpZHT} is
\beq{PvpZHT'}
\pr(\bH_t\in \mG)<\eta.
\enq

Let $(\bH,\bU)$ and $\Theta$ again be as in \eqref{HandU}
and \eqref{theta},
and set $\bG=\bH-Z$.
Note that, as $\barvp_Z(\h)$ depends only on $|\h|$ and $\h-Z$,
membership of $\bH$ in $\mG$ is decided by $\bG$.
We now think of choosing first 
$\bG$ and then $\bH\sm\bG$ and $\bU$.
The law of $\bG$ plays no role here; what matters is that $\bH\sm\bG$ is
uniform from $\mJ_Z(\bH)$ ($=\{\h'\in \K^t: \h'-Z=\bG\}$; see \eqref{mKZH}),
so that (essentially by definition; see \eqref{vpZh}, \eqref{barvpZh})
\[
\pr(\bU\in \mL|\bG) =\barvp_Z(\bH).
\]
Thus
\[   
\Theta = \pr(\bH\in \mG)\pr(\bU\in \mL|\bH\in\mG) <
\pr(\bH\in \mG)\cdot \eta\gb(\K).
\]   
We then sacrifice the first factor on the r.h.s.\ and combine with \eqref{theta=}
to get \eqref{PvpZHT'}.
(The sacrifice in this case is substantial, but we aren't asking much and can afford it.)\end{proof}

\begin{proof}[Proof of \eqref{prR3}]
(Recall this said
$
\pr(\bH_t\not\in \mR^3) < \exp[-(1-o(1))n^{2\gd}]
$
$\forall t\leq T$.)
Set  
$q=m/|\K|$ and (as in $\mR^3$ and Lemma~\ref{Lanemic} resp.) $\ga=m_{_T}/m$
and $\gl=m_{_T}/|\K|$;
so $\K_\gl = (\K_q)_\ga$.
From Lemma~\ref{Lanemic} and the definition of $\mR^3$ (and the fact that 
on $\{|\K_q|= m\}$, $\K_q$ is distributed as $\bH$), we have
\begin{eqnarray*}
\exp[-2n^{2\gd}] &>& \pr(\mbox{$\K_\gl$ anemic}) \\
&>& 
\pr(|\K_q|= m)\pr(\bH \not\in \mR^3)\exp[-(1-o(1))n^{2\gd}],
\end{eqnarray*}
which, since $\pr(|\K_q|= m)\asymp m^{-1/2}$, implies 
$\pr(\bH \not\in \mR^3) < \exp[-(1-o(1))n^{2\gd}]$.
We then set
$\mG =\{\h\sub \K:\h\not\in \mR^3\}$ to obtain, as in the above treatment of \eqref{Rg0},
\begin{eqnarray*}
\pr (\bH_t\not\in \mR^3)&=&
\pr(\bH_t\in \mG) ~=~ \Theta/\gb(\K) \\
&\leq &
\pr(\bH \not\in \mR^3)/\gb(\K)
~<~\exp[-(1-o(1))n^{2\gd}].
\end{eqnarray*}
\end{proof}

\end{document}